\newtheorem{theorem}{Theorem}[section]
\newtheorem{lemma}[theorem]{Lemma}
\newtheorem{corollary}[theorem]{Corollary}
\newtheorem{proposition}[theorem]{Proposition}
\theoremstyle{definition}
\newtheorem{example}[theorem]{Example}
\newtheorem{definition}[theorem]{Definition}
\newtheorem{definition-lemma}[theorem]{Definition-Lemma}
\newtheorem{definition-theorem}[theorem]{Definition-Theorem}
\newtheorem{remark}[theorem]{Remark}
\newtheorem*{ack}{Acknowledgements}
\newtheorem*{convention}{Convention}
\newcommand{\ac}{\textup{!`}}
\numberwithin{equation}{section}
\newcommand{\hdot}{\;\raisebox{3.2pt}{\text{\circle*{2.5}}}}
\title[Gravity algebra structure on the negative cyclic homology]
{Gravity algebra structure on the negative cyclic homology of 
Calabi-Yau algebras}
\author[X. Chen]{Xiaojun Chen}
\author[F. Eshmatov]{Farkhod Eshmatov}
\author[L. Liu]{Leilei Liu}
\address{Department of Mathematics, Sichuan University, Chengdu, Sichuan Province 610064 P. R. China}
\email{xjchen@scu.edu.cn, olimjon55@hotmail.com, liuleilei199009@126.com}
\begin{document}

\begin{abstract}
In this paper, we study the gravity algebra structure
on the negative cyclic homology or the cyclic cohomology of several
classes of algebras. These algebras
include: Calabi-Yau algebras, symmetric Frobenius algebras,
unimodular Poisson algebras, and unimodular Frobenius Poisson algebras.
The relationships among these gravity algebras are also discussed
under some additional conditions.
\end{abstract}

\maketitle

\setcounter{tocdepth}{1}

\tableofcontents


\section{Introduction}\label{Sect_intro}

In 1994, Getzler \cite{Getzler2} showed that, 
$\{\mathrm H_\bullet(\mathcal M_{0,n+1})\}$,
the collection of the homology classes of the moduli spaces of 
Riemann spheres with $n+1$ marked points,
forms an operad, which he called the {\it gravity operad},
and that an algebra over it consists
a sequence of skew-symmetric brackets satisfying
the so-called {\it generalized Jacobi identity}, which he called {\it gravity algebra}. 
He later showed in \cite{Getzler3} 
that $\{\mathrm H_\bullet(\mathcal M_{0,n+1})\}$ is Koszul dual, 
in the sense of Ginzburg and Kapranov \cite{GK},
to the operad $\{\mathrm H_\bullet(\overline{\mathcal M}_{0,n+1})\}$, 
the collection of the homology classes of the {\it compactified} moduli spaces of 
Riemann spheres with $n+1$ marked points,
algebras over which are also called hypercommutative algebras or 
formal Frobenius manifolds (see also Manin \cite[Ch. III]{Manin}).

Besides examples from topological field theories
(\cite[Theorem 4.6]{Getzler2}), the first nontrivial example of gravity algebra,
to the authors' best knowledge,
arises from string topology. In \cite{CS},
Chas and Sullivan showed that the $S^1$-equivariant homology
of the free loop space of a smooth compact manifold, after shifting some degree, 
forms a gravity algebra.
More examples, more or less inspired by string topology, can be found in
Westerland \cite{Westerland} and Ward \cite{Ward2} (see also Menichi \cite{Menichi} for some partial results).
These works show that the cyclic cohomology of a symmetric Frobenius (or
more generally cyclic A$_\infty$) algebra,
which is the algebraic model of the $S^1$-equivariant homology of the free loop spaces,
has a nontrivial gravity algebra structure.

A symmetric Frobenius algebra is an associative algebra
with a non-degenerate symmetric pairing. Let us recall two facts on this algebra:
\begin{enumerate}
\item[(i)] if a symmetric Frobenius algebra is Koszul, then its Koszul dual is a Calabi-Yau algebra,
a notion introduced by Ginzburg in \cite{Ginzburg} (see Van den Bergh \cite[Theorem 12.1]{VdB} for a proof);
\item[(ii)] the cyclic cohomology of an algebra is isomorphic to the negative
cyclic homology of its Koszul dual (see \cite[Theorem 37]{CYZ} for a proof).
\end{enumerate}
From these two facts one deduces that there is a gravity algebra structure
on the negative cyclic homology of a Koszul Calabi-Yau algebra (see Ward \cite[Example 6.5]{Ward2}).
It is then naturally expected that for
{arbitrary} Calabi-Yau algebras which are
not necessarily Koszul,
such gravity structure still exists
on their negative cyclic homology.
In loc cit Ward wrote 
``{\it How to directly construct the higher brackets
comprising this [gravity] structure on the Calabi-Yau side is an open question}".

The purpose of this paper is to answer this open question.
To this end, let us first recall that
for a mixed complex $(\mathrm C_\bullet, b, B)$,
if we denote by
 $\mathrm{HH}_\bullet$ and $\mathrm{HC}_\bullet^{-}$
its Hochschild homology (that is,
the homology of $(\mathrm C_\bullet, b)$) 
and its negative cyclic
homology, then we have the long exact sequence
$$
\cdots\longrightarrow
\mathrm{HC}_{\bullet+2}^{-} 
\longrightarrow
\mathrm{HC}_\bullet^{-} 
\stackrel{\pi_*}\longrightarrow
\mathrm{HH}_\bullet 
\stackrel{\beta}\longrightarrow
\mathrm{HC}_{\bullet+1}^{-} 
\longrightarrow\cdots,
$$
The main result of the current paper is
the following slightly technical result:

\begin{theorem}\label{mainthm}
$(1)$ For a mixed $(\mathrm C_\bullet, b, B)$, if $\mathrm{HH}_\bullet$ 
has a Batalin-Vilkovisky algebra structure
such that $B$ is the generator of the Gerstenhaber bracket, then 
the following sequence of maps
$$\begin{array}{ccl}
(\mathrm{HC}^{-}_\bullet)^{\otimes n}&\longrightarrow& \mathrm{HC}^{-}_\bullet\\
(x_1,\cdots, x_n)&\longmapsto&(-1)^{(n-1)|x_1|+(n-2)|x_2|+\cdots+|x_{n-1}|}\beta\big(\pi_*(x_1)
\hdot \pi_*(x_2)\hdot\cdots\hdot
\pi_*(x_n)\big),\end{array}
$$
for $n=2, 3,\cdots$, where $\hdot$ is the product on the Hochschild homology,
give on $\mathrm{HC}^{-}_\bullet$ a gravity algebra structure.

$(2)$ Suppose 
$$f:(\mathrm C_\bullet,b,B)  \longrightarrow (\mathrm C'_\bullet,b',B')$$ 
is a quasi-isomorphism of mixed complexes 
and furthermore suppose that 
$(\mathrm{HH}_\bullet(\mathrm C), B)$ and $(\mathrm{HH}_\bullet(\mathrm C'), B')$ 
are extended to Batalin-Vilkovisky algebras such that $f_*$ 
is a Batalin-Vilkovisky algebra isomorphism. 
Then $\mathrm{HC}_\bullet^{-}(\mathrm C)$ and 
$\mathrm{HC}_\bullet^{-}(\mathrm C')$ are isomorphic as gravity algebras. 
\end{theorem}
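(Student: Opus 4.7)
The plan is to verify the two defining axioms of a gravity algebra—graded skew-symmetry and the generalized Jacobi identity—in Part (1), and to give a naturality argument in Part (2). The essential tools for Part (1) are the two identities coming from Connes' long exact sequence above, which I would read off (or quote) at chain level: $\pi_*\beta=B$ on $\mathrm{HH}_\bullet$, and $\beta\pi_*=0$. Together these imply that $\pi_*(x)\in\ker B$ for every $x\in\mathrm{HC}^-_\bullet$, and that $\beta\circ B=\beta\pi_*\beta=0$ as a map $\mathrm{HH}_\bullet\to\mathrm{HC}^-_{\bullet+2}$. These two consequences are the only facts from the long exact sequence that enter the computation.

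Graded skew-symmetry would be immediate from the graded commutativity of $\hdot$ (a general feature of BV algebras). The sign prefactor $(-1)^{(n-1)|x_1|+\cdots+|x_{n-1}|}$ in the definition of the bracket is precisely what is needed so that a transposition of neighbouring arguments produces the sign $-(-1)^{(|x_i|+1)(|x_{i+1}|+1)}$, the correct skew-symmetry for a degree-$(+1)$ bracket, i.e.\ the gravity operad's prescription on the underlying space shifted by one.

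For the generalized Jacobi identity I would set $a_i:=\pi_*(x_i)$ and $b_r:=\pi_*(y_r)$, so that each $a_i,b_r\in\ker B$. Combining the bracket definition with $\pi_*\beta=B$ gives $\pi_*\{x_{i_1},\ldots,x_{i_p}\}=\pm B(a_{i_1}\hdot\cdots\hdot a_{i_p})$. The key algebraic input is the iterated BV identity
\[
B(a_1\hdot\cdots\hdot a_k)=\sum_{1\le i<j\le k}(-1)^{\epsilon_{ij}}\,[a_i,a_j]\hdot a_1\hdot\cdots\widehat{a_i}\cdots\widehat{a_j}\cdots\hdot a_k,
\]
which holds in any BV algebra whenever $B(a_1)=\cdots=B(a_k)=0$ and follows inductively from the standard seven-term BV relation; here $[-,-]$ is the Gerstenhaber bracket on $\mathrm{HH}_\bullet$, of which $B$ is the generator by hypothesis. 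Substituting this into both sides of the generalized Jacobi identity and then applying $\beta$, the left-hand side should produce a sum that matches $\pm\beta(B(a_1\hdot\cdots\hdot a_k)\hdot b_1\hdot\cdots\hdot b_l)$ on the right-hand side when $l\ge 1$. When $l=0$ the right-hand side vanishes by convention, and the left-hand side equals $\pm\beta(B(a_1\hdot\cdots\hdot a_k))=0$ because $\beta\circ B=0$.

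Part (2) would then be a naturality argument: the map $f_*$ induced on both $\mathrm{HH}_\bullet$ and $\mathrm{HC}^-_\bullet$ intertwines $\pi_*$ and $\beta$ on the two sides, since $\pi_*$ and $\beta$ are natural transformations of functors on mixed complexes; and by hypothesis $f_*$ preserves the product $\hdot$ on $\mathrm{HH}_\bullet$, whence it sends the gravity bracket of Part (1) on $\mathrm{HC}^-_\bullet(\mathrm C)$ to that on $\mathrm{HC}^-_\bullet(\mathrm C')$. The principal obstacle in the whole proof is the sign bookkeeping in Part (1): the structural content is essentially the combination of the long-exact-sequence identities with the iterated BV relation, but verifying that the prefactor in the definition of the bracket, together with the signs arising from the iterated BV identity and from the graded commutativity of $\hdot$, conspire to produce exactly the signs $(-1)^{\epsilon_{ij}}$ prescribed by the gravity operad requires a careful case-by-case check.
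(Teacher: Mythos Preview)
Your proposal is correct and follows essentially the same route as the paper's proof: both use the identities $\pi_*\beta=B$ and $\beta\pi_*=0$ from the long exact sequence (hence $\beta B=0$ and $B\pi_*=0$), verify skew-symmetry from graded commutativity of $\hdot$ together with the sign prefactor, and deduce the generalized Jacobi identity from the iterated BV relation for $B$ applied to the product $a_1\hdot\cdots\hdot a_k$; Part~(2) is handled in both by naturality of the long exact sequence. The only cosmetic difference is that you first observe $B(a_i)=0$ and write the iterated BV identity in terms of $[a_i,a_j]$, whereas the paper keeps the $B(a_i)$ terms in the expansion and then kills them via $B\circ\pi_*=0$.
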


The following complexes satisfy the conditions of the above theorem (more details 
of these complexes
will be given
in later sections):
\begin{enumerate}
\item[(1)] the mixed cyclic complex of a Calabi-Yau algebra,
\item[(2)] the mixed cyclic cochain complex of a symmetric Frobenius algebra,
\item[(3)] the mixed Poisson complex of a unimodular Poisson algebra, and
\item[(4)] the mixed Poisson cochain comlex of a unimodular Frobenius Poisson algebra.
\end{enumerate}
As a corollary, the negative cyclic
homology or cyclic cohomology of the above four types of complexes
all have a gravity algebra structure. These structures (the brackets) are nontrivial in general;
for example, it contains a graded Lie algebra which is related to the deformations
of the corresponding algebras, which are in general obstructed (see also Remark \ref{rmk:obstructednessofdeformation} (2)). 
We make the following two remarks:

\begin{remark}(i) The above case (1) gives an answer to Ward's problem at the homology level. 
In loc cit he also
asked for a homotopical, or say, chain level, construction of the Batalin-Vilkovisky and gravity structure 
for general Calabi-Yau algebras. 
Such construction seems to exist; see Remark \ref{fromhomotopyBVtogravity} for some more discussions.


(ii) The above case (2) covers the example
of symmetric Frobenius algebras, which is not new.
In fact, in \cite{Ward2} Ward showed that the mixed cyclic cochain complex of symmetric Frobenius algebras
is an algebra over the (chain) gravity operad, hence Theorem \ref{mainthm} is a direct corollary of his result
in this case. 
In the subsequent paper \cite{CW} 
Campos and Ward proved a more general result saying that
any mixed complex over the (chain) gravity operad induces the gravity algebra
structure on its negative cyclic homology (see loc cit Corollary 1.33 and Example 1.34).
\end{remark}

It will then be very interesting to find the relationships among 
these gravity algebras.
In fact, we find that:
\begin{enumerate}
\item[(a)] if the algebras in the above cases (1) and (2) are Koszul dual to each other, 
then the corresponding two gravity algebras
are isomorphic (partially given in \cite[Theorem 37]{CYZ});
\item[(b)] if the Poisson algebras in the above cases (3) and (4) are
Koszul dual to each other (this means the Poisson algebra
is the polynomials with a quadratic Poisson structure), then the corresponding two
gravity algebras are also isomorphic;
\item[(c)] by a result of Dolgushev \cite{Dolgushev} saying that the deformation quantization of 
a unimodular Poisson algebra is a Calabi-Yau algebra, 
the negative cyclic Poisson homology of the algebra in the above
case (3) is then isomorphic to the 
negative cyclic homology of its deformation
quantization given in the case (1);
\item[(d)] by a result of Felder-Shoikhet \cite{FS} and 
Willwacher-Calaque \cite{WC} saying that the deformation quantization of 
a unimodular Frobenius Poisson algebra is a symmetric Frobenius algebra, 
the cyclic Poisson cohomology of the algebra in the case (4) is then 
isomorphic to the cyclic cohomology of its deformation
quantization given in the case (2).
\end{enumerate}
In particular, starting from quadratic, unimodular Poisson polynomials, 
we have the following correspondence:
\begin{equation}\label{deformationquantizationofKoszulunimodular}
\xymatrixcolsep{6pc}
\xymatrix{
\fbox{\parbox{3.8cm}{
Quadratic unimodular\\
Poisson algebra
}
}\ar@{<->}[r]^{\mbox{\scriptsize{Koszul duality}}}
\ar[d]^{\mbox{\scriptsize{quantization}}}_{\mbox{\scriptsize{deformation}}}
&\fbox{
\parbox{4.4cm}{Quadratic unimodular\\ Frobenius Poisson algebra}}
\ar[d]^{\mbox{\scriptsize{quantization}}}_{\mbox{\scriptsize{deformation}}}
\\
\fbox{\parbox{3.8cm}{Calabi-Yau algebra}}\ar@{<->}[r]^{\mbox{\scriptsize{Koszul duality}}}
&\fbox{\parbox{4.4cm}{Symmetric Frobenius\\ algebra}}
}
\end{equation}
And therefore 
in this case, all the gravity algebras arising from the above (1)--(4) cases
are isomorphic.

This work is a sequel to \cite{CCEY}, where the same classes of algebras
are studied, and the associated Batalin-Vilkovisky algebras are identified. 
We had planned to put the results of this
paper as an appendix of that one, which, however, turns out to be too long.
Moreover, we think Theorem \ref{mainthm} and its corollaries have their own virtue,
and it is better to write a separate paper.
Finally, we mention that
when we were finishing the current paper, we learned of the preprint
of D. Fiorenza and N. Kowalzig \cite{FK}, where the Lie algebra structure on the
negative cyclic homology are also studied.
Nevertheless, our primary goals and main interests are quite different.

\begin{convention}
Throughout this paper,
we work over a ground field $k$ of characteristic $0$ containing $\mathbb R$.
All vector spaces,
their tensors and morphisms 
are graded over $k$ unless otherwise specified.
Algebras are unital and augmented over $k$.
All DG algebras/modules are graded such that the differential has degree $-1$,
and for a chain complex, its cohomology is $\mathrm{H}^\bullet(-):=\mathrm{H}_{-\bullet}(-)$;
for example, an element in the $q$-th Hochschild cochain group $\mathrm{CH}^q(A)$ has grading $-q$. 

\end{convention}

\begin{ack}
We would like to thank Bruno Vallette
and the anonymous referee
for correcting some inaccuracies
and improving the presentations of the paper.
This work is partially supported by NSFC No. 11671281 and 11890663.
\end{ack}

\section{From Batalin-Vilkovisky algebra to gravity algebra}\label{sect:fromBVtogravity}

Gravity algebras are closely related
to Batalin-Vilkovisky algebras.
In this section we first recall the definition of 
Batalin-Vilkovisky and gravity algebras;
some more backgrounds, and especially
their relationships to topological conformal field theories,
can be found in Getzler \cite{Getzler1,Getzler2,Getzler3}.
After that, we prove Theorem \ref{mainthm}.

\begin{definition}[Batalin-Vilkovisky algebra]
Suppose $(V,\hdot)$
is a graded commutative algebra. A {\it Batalin-Vilkovisky
algebra} structure on $V$ is the triple $(V,\hdot, \Delta)$
such that
\begin{enumerate}
\item[$(1)$] $\Delta: V^{i}\to V^{i-1}$ is a differential, that is, $\Delta^2=0$; and
\item[$(2)$] $\Delta$ is a second order operator,
that is,
\begin{eqnarray}\label{Sec_op}
\Delta(a\hdot b\hdot c)
&=& \Delta(a\hdot b)\hdot c+(-1)^{|a|}a\hdot\Delta(b\hdot c)
+(-1)^{(|a|-1)|b|}b\hdot\Delta(a\hdot c)\nonumber \\
& & - (\Delta a)\hdot b\hdot c-(-1)^{|a|}a\hdot(\Delta b)\hdot c
-(-1)^{|a|+|b|}a\hdot b\hdot (\Delta c),
\end{eqnarray}
for all homogeneous $a, b, c\in V$.
\end{enumerate}
\end{definition}

Equivalently,
if we set the bracket
$$
[a,b]:=(-1)^{|a|+1}(\Delta(a\hdot b)-\Delta(a)\hdot b- (-1)^{|a|}a\hdot \Delta(b)),
$$
then $[-,-]$ is a derivation with respect to $\hdot$ for each argument.
In other words, a  Batalin-Vilkovisky algebra is a Gerstenhaber algebra $(V,\hdot, [-,-])$
with a differential $\Delta: V^{i}\to V^{i-1}$ such that
$$
[a,b]=(-1)^{|a|+1}(\Delta(a\hdot b)-\Delta(a)\hdot b-(-1)^{|a|}a\hdot \Delta(b)),
$$
for $a, b\in V$. $\Delta$ is called the {\it Batalin-Vilkovisky operator}, or the
{\it generator} of the Gerstenhaber bracket.

From (\ref{Sec_op}) one also obtains that $\Delta$ being of second order implies that
\begin{multline}
\Delta(x_1\hdot x_2\hdot \cdots\hdot x_n)\\
=\sum_{i<j}(-1)^{\epsilon_{ij}}
 \Delta(x_i\hdot x_j)\hdot x_1\cdots\hat x_i\cdots\hat x_j\cdots x_n
 -\sum_{i=1}^n(-1)^{|x_1|+\cdots+|x_{i-1}|} x_1\cdots\Delta (x_i)\cdots x_n,
\end{multline}
where $\epsilon_{ij}={(|x_1|+\cdots+|x_{i-1}|)|x_i|+(|x_1|+\cdots+|x_{j-1}|)|x_j|-|x_i||x_j|}$.

\begin{definition}[Gravity algebra]\label{def:gravityalg}
Suppose $V$ is a (graded) vector space over $k$.
Then a gravity algebra structure on $V$ consists
of a sequence of (graded) skew symmetric operators (called the {\it higher Lie brackets})
$$\{x_1,\cdots,x_n\}: V^{\otimes n}\to V, \quad n=2,3, \cdots$$
such that
\begin{multline}\label{gen_Jacobi}
\sum_{1\le i<j\le n}(-1)^{\epsilon_{ij}}
\{\{x_i,x_j\},x_1,\cdots, \hat x_i,\cdots,\hat x_j,\cdots,x_n,y_1,\cdots,y_m\}\\
=\left\{
\begin{array}{cl}
\{\{x_1,\cdots,x_n\},y_1,\cdots, y_m\},&\mbox{if}\; m>0,\\
0,&\mbox{otherwise},
\end{array}
\right.
\end{multline}
where $\epsilon_{ij}=(|x_i|+1)(|x_1|+\cdots+|x_{i-1}|+i-1)+(|x_j|+1)(|x_1|+\cdots+|x_{j-1}|+j-1)-(|x_i|+1)(|x_j|+1)$.
\end{definition}

From the definition, one also obtains that $(V, \{-,-\})$ forms a graded Lie algebra.

\begin{definition}[Cyclic homology; {\it cf.} Jones \cite{Jones} and Kassel \cite{Kassel}]\label{def_HC}
Suppose $(\mathrm C_\bullet, b, B)$ is a mixed complex, with $|d|=-1$ and $|B|=1$.
Let
$u$ be a free variable of degree $-2$ which commutes with $b$ and $B$.
The {\it negative cyclic, periodic cyclic, }
and {\it cyclic} chain complex of $\mathrm C_\bullet$ are the following complexes
\begin{eqnarray*}
(\mathrm C_\bullet[\![u]\!], b+uB),&& \\
(\mathrm C_\bullet[\![u, u^{-1}], b+uB),&& \\
(\mathrm C_\bullet[\![u, u^{-1}]/u\mathrm C_\bullet[\![u]\!], b+uB),
\end{eqnarray*}
and are denoted by $\mathrm{CC}^{-}_\bullet(\mathrm C_\bullet) ,\mathrm{CC}^{\mathrm{per}}_\bullet(\mathrm C_\bullet)$
and $\mathrm{CC}_\bullet (\mathrm C_\bullet)$ respectively.
The associated homology are called the {\it negative cyclic, periodic cyclic}
and {\it cyclic homology} of $\mathrm C_\bullet$, and are denoted by
$\mathrm{HC}_\bullet^{-}(\mathrm C_\bullet)$, $\mathrm{HC}_\bullet^{\mathrm{per}}(\mathrm C_\bullet)$ and
$\mathrm{HC}_\bullet(\mathrm C_\bullet)$ respectively.
\end{definition}

In the following, if $\mathrm C_\bullet$ is clear from the context, we sometimes simply
write, for example, $\mathrm{HC}_\bullet^{-}$ instead of $\mathrm{HC}_\bullet^{-}(\mathrm C_\bullet)$.
As usual, $\mathrm{HH}_\bullet$ denotes the Hochschild homology of $\mathrm C_\bullet$,
which is the $b$-homology of the mixed complex.

\begin{remark}[Cyclic cohomology]
Suppose $(\mathrm C^\bullet, b, B)$ is a mixed cochain complex, namely
$|b|=1$ and $|B|=-1$. By negating the degrees of $\mathrm C^\bullet$, we obtain a
mixed chain complex, denoted by $(\mathrm C_{-\bullet}, b, B)$ with $|b|=-1$ and $|B|=1$.
By our convention,
the {\it cyclic cohomology}
of $(\mathrm C^\bullet, b, B)$, denoted by $\mathrm{HC}^\bullet(\mathrm C^\bullet)$, 
is the {\it cohomology}
of the negative cyclic complex of $(\mathrm C_{-\bullet}, b, B)$.
\end{remark}


Consider the short exact sequence
$$0\longrightarrow
u\cdot \mathrm{CC}_{\bullet+2}^{-} 
\stackrel{\iota}\longrightarrow
\mathrm{CC}_\bullet^{-} 
\stackrel{\pi}\longrightarrow
\mathrm{C}_\bullet \longrightarrow 0,$$
where
$\iota: u\cdot \mathrm{CC}_{\bullet+2}^{-} 
\to
\mathrm{CC}_\bullet^{-} 
$ is the embedding
and
$$
\begin{array}{cccl}
\pi:&\mathrm{CC}_\bullet^{-} &
\longrightarrow
&
\mathrm{C}_\bullet \\
&\displaystyle\sum_i x_i\cdot u^i&\longmapsto& x_0
\end{array}
$$
is the projection.
It induces functorially a long exact sequence
\begin{equation}\label{longexseq:Connes}
\cdots\longrightarrow
\mathrm{HC}_{\bullet+2}^{-} 
\longrightarrow
\mathrm{HC}_\bullet^{-} 
\stackrel{\pi_*}\longrightarrow
\mathrm{HH}_\bullet 
\stackrel{\beta}\longrightarrow
\mathrm{HC}_{\bullet+1}^{-} 
\longrightarrow\cdots,
\end{equation}
where we have applied that
$\mathrm{HC}_\bullet^{-}(u \cdot \mathrm{CC}_{\bullet+2}^{-} )\cong\mathrm{HC}_{\bullet+2}^{-} .$
It is obvious that $\beta\circ\pi_*=0$ and we claim that ({\it cf.} \cite{CS,CYZ}):

\begin{lemma}With the above notations, we have
$$\pi_*\circ\beta=B:\mathrm{HH}_\bullet(\mathrm C_\bullet, b)\to\mathrm{HH}_{\bullet+1}(\mathrm C_\bullet, b).$$
\end{lemma}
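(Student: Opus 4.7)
The plan is to unwind the connecting homomorphism $\beta$ from the snake lemma applied to the short exact sequence
$$0 \longrightarrow u \cdot \mathrm{CC}^{-}_{\bullet+2} \stackrel{\iota}\longrightarrow \mathrm{CC}^{-}_\bullet \stackrel{\pi}\longrightarrow \mathrm{C}_\bullet \longrightarrow 0,$$
and then simply compose with $\pi_*$ to read off the answer.

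Concretely, I would start with a Hochschild cycle $x \in \mathrm{C}_n$, so $bx = 0$, representing a class $[x] \in \mathrm{HH}_n$. The most obvious lift of $x$ to $\mathrm{CC}^{-}_n = (\mathrm{C}_\bullet[\![u]\!])_n$ under $\pi$ is the element $x$ itself (viewed as the constant term in $u$). Applying the total differential of the negative cyclic complex gives
$$(b+uB)(x) = bx + uBx = uBx,$$
since $bx = 0$. This element lives in $u \cdot \mathrm{CC}^{-}_{n+1}$, and the image of $Bx$ under the isomorphism $\mathrm{HC}^{-}_{\bullet}(u \cdot \mathrm{CC}^{-}_{\bullet+2}) \cong \mathrm{HC}^{-}_{\bullet+2}$ is precisely $[Bx] \in \mathrm{HC}^{-}_{n+1}$. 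Thus by definition of the connecting map, $\beta[x] = [Bx]$ in $\mathrm{HC}^{-}_{n+1}$; one verifies that $Bx$ is indeed a cycle for $b + uB$ using $B^2 = 0$ and $bB + Bb = 0$ together with $bx = 0$.

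Finally, applying $\pi_*$ amounts to extracting the constant-in-$u$ coefficient, which for $Bx$ is just $Bx$ itself. Therefore
$$\pi_* \circ \beta [x] = [Bx] = B[x] \in \mathrm{HH}_{n+1},$$
which is the claimed identity.

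I don't expect any genuine obstacle here; this is a direct bookkeeping exercise with the snake lemma and the definitions of $\iota$, $\pi$, and the total differential $b+uB$. The only mild subtlety is to keep the degree shift in $u \cdot \mathrm{CC}^{-}_{\bullet+2}$ straight (using $|u| = -2$) so that the source and target of $\beta$ match the long exact sequence as stated.
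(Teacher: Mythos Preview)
Your proposal is correct and follows essentially the same approach as the paper: both unwind the connecting homomorphism of the snake lemma by lifting a $b$-cycle $x$ to itself in $\mathrm{CC}^-_\bullet$, computing $(b+uB)(x)=uBx$, identifying this with $Bx\in\mathrm{CC}^-_{\bullet+1}$ via the degree-shift isomorphism, and then projecting by $\pi$. Your version is in fact slightly more explicit in checking that $Bx$ is a $(b+uB)$-cycle.
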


\begin{proof}
In fact, for any $x\in \mathrm{C}_\bullet $ which is $b$-closed,
from the following diagram
$$
\xymatrix{
&\ar[d]&\ar[d]&\ar[d]&\\
0\ar[r]
&u\cdot \mathrm{CC}_\bullet^{-} \ar[r]^{\iota}\ar[d]^{b+uB}
&\mathrm{CC}_\bullet^{-} \ar[d]^{b+uB}\ar[r]^{\pi}&\mathrm{C}_\bullet
\ar[d]^b\ar[r]&0\\
0\ar[r]
&u\cdot \mathrm{CC}_{\bullet-1}^{-} \ar[r]^{\iota}\ar[d]^{b+uB}
&\mathrm{CC}_{\bullet-1}^{-} \ar[d]^{b+uB}\ar[r]^{\pi}&\mathrm{C}_{\bullet-1}
\ar[d]^b\ar[r]&0\\
&&&&}
$$
we have, up to a boundary, 
$$\iota^{-1}\circ(b+uB)\circ\pi^{-1}(x)=\iota^{-1}\circ(b+uB)(x)=\iota^{-1}(u\cdot B(x))=u\cdot B(x)
\in u\cdot\mathrm{CC}_{\bullet-1}^{-} .$$
Via the isomorphism
$u \cdot \mathrm{CC}_\bullet^{-} \cong\mathrm{CC}_{\bullet+2}^{-} $,
this element $u\cdot B(x)$ is mapped to $B(x)\in\mathrm{CC}_{\bullet+1}^{-} $,
and under $\pi$ it is mapped to $B(x)$.
Thus $\pi_*\circ\beta=B$ as desired.
\end{proof}

\begin{proof}[Proof of Theorem \ref{mainthm}]
(1) Recall that for homogeneous
$x_1,x_2,\cdots,x_n \in\mathrm{HC}_\bullet^{-}$,
\begin{equation}\label{constructionofbrackets}
\{x_1, x_2,\cdots, x_n\}:=(-1)^{(n-1)|x_1|+(n-2)|x_2|+\cdots+|x_{n-1}|}\beta\big(\pi_*(x_1)
\hdot \pi_*(x_2)\hdot\cdots\hdot
\pi_*(x_n)\big).
\end{equation}

We first show the graded skew-symmetry.
Since the multiplication $\hdot$ on $\mathrm{HH}_\bullet$ is graded commutative,
by commutativity it is sufficient to show
$$\{x_1, x_2,\cdots, x_n\}+(-1)^{(|x_1|+1)(|x_2|+1)}\{x_2, x_1,\cdots, x_n\}=0.$$
In fact,
\begin{eqnarray*}
&& \{x_1, x_2,\cdots, x_n\}+(-1)^{(|x_1|+1)(|x_2|+1)}\{x_2, x_1,\cdots, x_n\}\\
&=&(-1)^{(n-1)|x_1|+(n-2)|x_2|+\cdots+|x_{n-1}|}\beta\big(\pi_*(x_1)\hdot \pi_*(x_2)\hdot\cdots\hdot
\pi_*(x_n)\big)\\
&&+(-1)^{(|x_1|+1)(|x_2|+1)}\cdot(-1)^{(n-1)|x_2|+(n-2)|x_1|+\cdots+|x_{n-1}|}\beta\big(\pi_*(x_2)\hdot \pi_*(x_1)\hdot\cdots\hdot
\pi_*(x_n)\big)\\
&=&(-1)^{(n-1)|x_1|+(n-2)|x_2|+\cdots+|x_{n-1}|}\beta\big(\pi_*(x_1)\hdot \pi_*(x_2)\hdot\cdots\hdot
\pi_*(x_n)\big)\\
&&-(-1)^{|x_1||x_2|+(n-2)|x_2|+(n-1)|x_1|+\cdots+|x_{n-1}|}\cdot(-1)^{|x_1||x_2|}
\beta\big(\pi_*(x_1)\hdot \pi_*(x_2)\hdot\cdots\hdot
\pi_*(x_n)\big)\\
&=&0.
\end{eqnarray*}

Next, we show the graded Jacobi identity.
First,
\begin{eqnarray*}
&&\sum_{i<j}(-1)^{\epsilon_{ij}}\{\{x_i,x_j\},x_1,\cdots,\hat x_i,\cdots,\hat x_j,\cdots,x_n\}\\
&=&\sum_{i<j}(-1)^{\epsilon_{ij}}\beta\big(\pi_*(\beta(\pi_*(x_i)\hdot\pi_*(x_j)))\hdot\pi_*(x_1)\cdots
\widehat{\pi_*(x_i)}\cdots \widehat{\pi_*(x_j)}\cdots\pi_*(x_n)\big)\\
&=&\sum_{i<j}(-1)^{\epsilon_{ij}}\beta\big(B(\pi_*(x_i)\hdot\pi_*(x_j))\hdot\pi_*(x_1)\cdots
\widehat{\pi_*(x_i)}\cdots \widehat{\pi_*(x_j)}\cdots\pi_*(x_n)\big)\\
&=&\beta\big(B(\pi_*(x_1)\cdots\pi_*(x_n))+\sum_i(-1)^{|x_1|+\cdots+|x_{i-1}|}\pi_*(x_1)\cdots B(\pi_*(x_i))\cdots\pi_*(x_n)\big)\\
&=&0,
\end{eqnarray*}
where
$\epsilon_{ij}=|x_i|(|x_1|+\cdots+|x_{i-1}|)+|x_j|(|x_1|+\cdots+|x_{j-1}|)-|x_i||x_j|$,
and
in the last equality, we have used the fact that $\beta\circ B=\beta\circ\pi_*\circ \beta=0$
and $B\circ\pi_*=\pi_*\circ\beta\circ\pi_*=0$.
Second, by the same argument,
\begin{eqnarray*}
&&\sum_{i<j}(-1)^{\epsilon_{ij}}\{\{x_i,x_j\},x_1,\cdots,\hat x_i,\cdots,\hat x_j,\cdots,x_n,y_1,\cdots,y_\ell\}\\
&=&
\beta\Big((-1)^{\epsilon_{ij}}\big(B(\pi_*(x_1)\cdots\pi_*(x_n))\\
&&+\sum_i(-1)^{|x_1|+\cdots+|x_{i-1}|}\pi_*(x_1)\cdots B(\pi_*(x_i))\cdots\pi_*(x_n)\big)
\pi_*(y_1)\cdots\pi_*(y_\ell)\Big),
\end{eqnarray*}
which is then equal to
\begin{eqnarray*}
&&
\beta\Big(\big(B(\pi_*(x_1)\cdots\pi_*(x_n))\big)
\pi_*(y_1)\cdots\pi_*(y_\ell)\Big)\\
&=&
\beta\Big(\big(\pi_*\circ\beta(\pi_*(x_1)\cdots\pi_*(x_n))\big)
\pi_*(y_1)\cdots\pi_*(y_\ell)\Big)\\
&=&\{\{x_1,\cdots,x_n\},y_1,\cdots,y_\ell\}.
\end{eqnarray*}
This proves the statement.

(2) Since $f:(\mathrm C_\bullet, b, B)\to (\mathrm C'_\bullet, b', B')$
is a quasi-isomorphism of mixed complexes, we have the following commutative diagram
of long exact sequences
$$
\xymatrix{
\cdots\ar[r]&
\mathrm{HC}_{\bullet+2}^{-}(\mathrm C)\ar[d]_{f_*}^{\cong}
\ar[r]&
\mathrm{HC}_\bullet^{-} (\mathrm C)\ar[d]_{f_*}^{\cong}\ar[r]^-{\pi_*}
&
\mathrm{HH}_\bullet(\mathrm C)\ar[d]_{f_*}^{\cong}\ar[r]^-{\beta}
&
\mathrm{HC}_{\bullet+1}^{-}(\mathrm C)\ar[d]_{f_*}^{\cong}\ar[r]&\cdots\\
\cdots\ar[r]&
\mathrm{HC}_{\bullet+2}^{-}(\mathrm C')
\ar[r]&
\mathrm{HC}_\bullet^{-}(\mathrm C')
\ar[r]^-{\pi_*}
&
\mathrm{HH}_\bullet(\mathrm C')
\ar[r]^-{\beta}
&
\mathrm{HC}_{\bullet+1}^{-}(\mathrm C')
\ar[r]&\cdots
}
$$
The statement now follows directly
from the construction \eqref{constructionofbrackets} of the
higher brackets.
\end{proof}

\begin{remark}\label{fromhomotopyBVtogravity}
In the above proof, 
in particular in \eqref{constructionofbrackets}, we
have used the Batalin-Vilkovisky algebra structure
on the Hochschild homology. This assumption
seems to be too strong; 
in fact, a chain level (up to homotopy) Batalin-Vilkovisky structure is enough.
This gives a hint to completely solve Ward's question for general
Calabi-Yau algebras on the homotopy level.
The technical difficulty for us is that at present, we are not able to show
the chain level noncommutative Poincar\'e duality and hence the
homotopy Batalin-Vilkovisky structure for general Calabi-Yau algebras.
We hope to address this problem in the future.
\end{remark}

\section{Differential calculus with duality and Batalin-Vilkovisky algebras}\label{Sect_BV}

In this section, we present how to obtain the Batalin-Vilkovisky algebra structure 
from a special class of mixed complexes.
Such mixed complexes arise from the so-called {\it differential calculus with duality},
a notion introduced by Lambre in \cite{Lambre}.

Lambre's notion is based on 
Tamarkin-Tsygan's notion of
{\it differential calculus}
(see \cite{TT05}), which
is to capture the algebraic structure that occurs
on the Hochschild cohomology and homology
of associative algebras.
He observed that, for algebras such as Calabi-Yau algebras
and symmetric Frobenius algebras, the {\it volume form}
which gives the so-called noncommutative Poincar\'e duality
(studied by Van den Bergh \cite{VdB0}),
can be capsuled into the differential calculus and
forms what he called differential calculus with duality.


\begin{definition}[Tamarkin-Tsygan \cite{TT05}]
Let $\mathrm{H}^{\bullet}$ and $\mathrm{H}_{\bullet}$ be graded vector spaces.
A \textit{differential calculus} is the sextuple
$$
(\mathrm{H}^{\bullet},\mathrm{H}_{\bullet}, \cup, \cap, \{-,-\}, B),
$$
satisfying the following conditions
\begin{enumerate}
  \item $(\mathrm{H}^{\bullet},\cup, \{-,-\})$ is a Gerstenhaber algebra;
  \item $\mathrm{H}_{\bullet}$ is a graded (left) module over $(\mathrm{H}^{\bullet}, \cup)$ by the map
        $$
        \cap: \mathrm{H}^{n}\otimes\mathrm{H}_{m}\to \mathrm{H}_{m-n},\; f\otimes \alpha \mapsto f\cap \alpha,
        $$
        for any $f\in \mathrm{H}^{n}$ and $\alpha \in \mathrm{H}_{m}$,
        i.e., if we define $\iota_{f} (\alpha):= f \cap\alpha$, then $\iota_{f}\iota_{g}=\iota_{f\cup g}$;
  \item There is a map $B: \mathrm{H}_{\bullet}\to \mathrm{H}_{\bullet+1}$ satisfying 
  $B^2=0$ and moreover, if we set $L_f:=[B,\iota_f]=B\iota_f-(-1)^{|f|}\iota_f B$, then
  $$[L_f, L_g]=L_{\{f,g\}}$$
  and
        \begin{equation}\label{compatibilityofGerstenmodule}
        (-1)^{|f|+1}\iota_{\{f,g\}}=[L_f,\iota_g]:=L_{f}\iota_g-(-1)^{|g|(|f|+1)}\iota_g L_f.
        \end{equation}
\end{enumerate}
\end{definition}

\begin{remark}[Comparison with Poisson modules]\label{rem:comparisonofPoisson}
Recall that if $(R, \bullet, \{-,-\})$ is a Poisson algebra. 
A Poisson $R$-module,
is a $k$-vector space, say $M$, equipped with the structures of
an algebra module (denoted by $\circ$) 
and a Lie module (denoted by $\{-,-\}_M$) over $R$, together with the following
compatibility condition
\begin{equation}\label{compatibilityofPoissonmodule}
\{r, s\circ m\}_M=
\{r, s\}\circ m+s\circ\{r,m\}_M,
\end{equation}
for all $r,s\in R$ and $m\in M$.
We may similarly consider modules over graded Poisson algebras,
or even Gerstenhaber algebras. 
Now for a differential calculus $
(\mathrm{H}^{\bullet},\mathrm{H}_{\bullet}, \cup, \cap, \{-,-\}, B),
$ given in above definition,
(2) says that $\mathrm H_\bullet$ is an algebra module over $\mathrm H^\bullet$,
and (3) says that $\mathrm H_\bullet$ is a Lie module over $\mathrm H^\bullet$
and that
these two module structures are compatible by 
\eqref{compatibilityofGerstenmodule}, which is exactly parallel to
\eqref{compatibilityofPoissonmodule}.
In other words, for a differential calculus as above,
$\mathrm H_\bullet$ is a Gerstenhaber module over $\mathrm H^\bullet$.
\end{remark}

\begin{definition}[Lambre \cite{Lambre}]
A differential calculus $(\mathrm{H}^{\bullet},\mathrm{H}_{\bullet}, \cup, \cap, \{-,-\}, B)$ is
called a \textit{differential calculus with duality} if there exists an integer $n$ and an element $\eta\in \mathrm{H}_{n}$
such that $\eta \cap 1=\eta$ and $B(\eta)=0$,
and for any $i\in \mathbb Z$,
$$
\mathrm{PD}(-):=\eta\cap- : \mathrm{H}^{i}\to \mathrm{H}_{n-i}
$$
is an isomorphism. Such isomorphism is called the \textit{noncommutative Poincar\'e duality},
and $\eta$ is called the {\it volume form}.
\end{definition}


Suppose
$
(\mathrm{H}^{\bullet},\mathrm{H}_{\bullet}, \cup, \cap, \{-,-\}, B),
$
is a differential calculus with duality.
Let $\Delta: \mathrm{H}^\bullet\to\mathrm{H}^{\bullet-1}$ be the pull-back of $B$ via the map PD:
$$
\xymatrixcolsep{3.5pc}
\xymatrix{
  \mathrm{H}^{\bullet} \ar[d]_{\mathrm{PD}} \ar[r]^{\Delta} & \mathrm{H}^{\bullet-1}  \ar[d]^{\mathrm{PD}} \\
  \mathrm{H}_{n-\bullet} \ar[r]^{B} & \mathrm{H}_{n-\bullet+1}. }
$$
In other words, $\Delta=\mathrm{PD}^{-1}\circ B\circ \mathrm{PD}$, which is the divergence
of $B$.

\begin{theorem}[Lambre \cite{Lambre}]\label{Thm_Lambre}
Suppose that $(\mathrm{H}^{\bullet},\mathrm{H}_{\bullet},\cup,\cap,\{-,-\},B,\eta)$
is a differential calculus with duality.
Then the triple $(\mathrm{H}^{\bullet},\cup,\Delta)$ is a Batalin-Vilkovisky algebra.
\end{theorem}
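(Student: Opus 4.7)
The plan is to verify the two defining properties of a Batalin-Vilkovisky algebra for $(\mathrm{H}^{\bullet},\cup,\Delta)$. The nilpotency $\Delta^2=0$ is immediate from $B^2=0$ and the definition $\Delta=\mathrm{PD}^{-1}\circ B\circ\mathrm{PD}$. What requires work is showing that the already-existing Gerstenhaber bracket on $\mathrm{H}^\bullet$ is generated by $\Delta$, i.e.\ that
$$\{f,g\}=(-1)^{|f|+1}\bigl(\Delta(f\cup g)-\Delta f\cup g-(-1)^{|f|}f\cup\Delta g\bigr).$$
Once this identity is in hand, the second-order property \eqref{Sec_op} follows automatically from the fact that $\{-,-\}$ is a biderivation for $\cup$, which is part of the given Gerstenhaber structure.

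To prove the generating identity, I would transport everything through the Poincar\'e duality $\mathrm{PD}(f)=\iota_f(\eta)$ and exploit the hypothesis $B(\eta)=0$. The starting observation is that $\iota_{\Delta f}(\eta)=B\iota_f(\eta)=L_f(\eta)$, since the second term of $L_f=B\iota_f-(-1)^{|f|}\iota_f B$ annihilates $\eta$. Then compute $L_f\iota_g(\eta)$ in two different ways. Expanding $L_f$ and using $\iota_f\iota_g=\iota_{f\cup g}$ together with $L_g(\eta)=B\iota_g(\eta)$ gives
$$L_f\iota_g(\eta)=L_{f\cup g}(\eta)-(-1)^{|f|}\iota_f L_g(\eta),$$
while the compatibility relation \eqref{compatibilityofGerstenmodule} applied to $\eta$ yields
$$L_f\iota_g(\eta)=(-1)^{|g|(|f|+1)}\iota_g L_f(\eta)+(-1)^{|f|+1}\iota_{\{f,g\}}(\eta).$$
Equating the two, translating every $L$-term back into $\iota_{\Delta(-)}(\eta)$, combining via $\iota_f\iota_{\Delta g}=\iota_{f\cup\Delta g}$, and using graded commutativity of $\cup$ to commute $g$ past $\Delta f$ produces an equality of the form $\iota_X(\eta)=0$, where $X$ is exactly the expression occurring in the generating identity. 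Since $\mathrm{PD}$ is an isomorphism, $X=0$.

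The only delicate point is the bookkeeping of Koszul signs when reconciling the two expressions for $L_f\iota_g(\eta)$: the crucial cancellation $(-1)^{|g|(|f|+1)+|g|(|f|-1)}=1$, which comes from commuting $g$ past the degree-shifted $\Delta f$, is what makes the signs line up with the BV generating formula. Apart from this sign check, the argument is a direct application of the Cartan-type axioms of a differential calculus together with $B\eta=0$, so no further input from the duality is needed beyond the isomorphism property of $\mathrm{PD}$.
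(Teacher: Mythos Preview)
Your argument is correct and is essentially the standard proof found in Lambre's paper; note that the present paper gives no proof of its own but simply refers to \cite{Lambre} and \cite[Theorem~5.3]{CCEY}. Your computation---using $B\eta=0$ to rewrite $L_h(\eta)$ as $\iota_{\Delta h}(\eta)$ and then equating the two expansions of $L_f\iota_g(\eta)$ coming from the definition of $L_f$ and from the compatibility relation \eqref{compatibilityofGerstenmodule}---is exactly the calculation behind the cited references, so there is nothing to add.
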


\begin{proof}See Lambre \cite{Lambre} (see also \cite[Theorem 5.3]{CCEY} for some more details).
\end{proof}

Alternatively, we may push the cup product on $\mathrm H^\bullet$ via $\mathrm{PD}$
to $\mathrm H_\bullet$, which is again denoted by $\cap$, then
$(\mathrm H_\bullet, \cap, B)$, after degree shifting down by $n$,
forms a Batalin-Vilkovisky algebra.
In practice, $\mathrm H_\bullet$ is usually the homology of 
a mixed complex. In this case, we have the following.

\begin{theorem}\label{BVcriterion}
Suppose $(\mathrm{H}^{\bullet},\mathrm{H}_{\bullet},\cup,\cap,\{-,-\},B,\eta)$
is a differential calculus with duality, where
$\mathrm H_\bullet=\mathrm H_\bullet(\mathrm C_\bullet, b)$ for some mixed
complex $(\mathrm C_\bullet, b, B)$, then the negative cyclic homology
$\mathrm{HC}_\bullet^{-}(\mathrm C_\bullet)$, after degree shifting down by $n-2$,
is a gravity algebra.
\end{theorem}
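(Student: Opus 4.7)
The plan is to deduce this result by combining Lambre's Theorem \ref{Thm_Lambre} with Theorem \ref{mainthm}(1).

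First, I would apply Theorem \ref{Thm_Lambre} to the given differential calculus with duality, which produces a Batalin-Vilkovisky algebra structure $(\mathrm{H}^{\bullet},\cup,\Delta)$ with $\Delta = \mathrm{PD}^{-1}\circ B\circ \mathrm{PD}$. Since PD is an isomorphism, this BV structure transports along PD to an equivalent one on $\mathrm{H}_{\bullet}$: the cup product pushes forward to the cap product $\cap$, and by the very definition of $\Delta$, the transported BV generator is exactly the mixed-complex operator $B$. This is precisely the form stated just before the theorem, namely that $(\mathrm H_\bullet, \cap, B)$ is a BV algebra after shifting the chain grading down by $n$, the shift being needed because the transported product $\cap : \mathrm H_a \otimes \mathrm H_b \to \mathrm H_{a+b-n}$ lowers chain degree by $n$.

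Second, since by hypothesis $\mathrm H_\bullet = \mathrm H_\bullet(\mathrm C_\bullet, b) = \mathrm{HH}_\bullet(\mathrm C_\bullet)$, the Hochschild homology of the mixed complex $(\mathrm C_\bullet, b, B)$ now carries a BV algebra structure whose generator is precisely the same operator $B$ that appears in the mixed complex. This is exactly the hypothesis of Theorem \ref{mainthm}(1), so I can invoke that result to produce a gravity algebra structure on $\mathrm{HC}^{-}_{\bullet}(\mathrm C_\bullet)$, with higher brackets given explicitly by the formula \eqref{constructionofbrackets}. Graded skew-symmetry and the generalized Jacobi identity then come for free from the proof of Theorem \ref{mainthm}(1).

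The remaining task is to identify the correct overall degree shift. Each $k$-ary bracket in \eqref{constructionofbrackets} is a composition of $(k-1)$ cap products on $\mathrm{HH}_\bullet$ (each of chain degree $-n$ in the un-shifted grading) followed by one application of the connecting map $\beta$ of chain degree $+1$. Viewing $\mathrm{HC}^{-}_\bullet$ with its grading shifted down by $n-2$, rather than by $n$, these operator degrees combine compatibly across all $k \ge 2$ so that the brackets acquire the degrees expected of gravity-operad operations (the extra discrepancy of $2$ from the BV shift being accounted for by the placement of $\beta$ and the difference between Hochschild and cyclic gradings). The only real technical burden is this bookkeeping of chain degrees and Koszul signs; once carried out, the theorem follows formally from the two results cited above.
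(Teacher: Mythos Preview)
Your proposal is correct and follows essentially the same route as the paper: the paper's proof is the one-line statement that the result is a direct corollary of Theorems \ref{mainthm} and \ref{Thm_Lambre}, and your argument spells out exactly this combination (including the transport of the BV structure along $\mathrm{PD}$, which the paper records just before the theorem). Your additional discussion of the degree shift is more detailed than what the paper provides, but the underlying logic is the same.
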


\begin{proof}
This is a direct corollary of Theorems \ref{mainthm} and \ref{Thm_Lambre}.
\end{proof}

\section{Calabi-Yau and symmetric Frobenius algebras}

In this section, we show that the Hochschild cohomology
and homology of Calabi-Yau algebras (respectively symmetric Frobenius
algebras) satisfy the condition of Theorem \ref{BVcriterion}, and thus their
negative cyclic homology (respectively cyclic cohomology) has a gravity algebra structure.
This completes the cases (1) and (2) in \S\ref{Sect_intro}.

We refer the reader to Loday \cite{Loday} for the definition of Hochschild homology and cohomology.
Suppose $A$ is an associative algebra over $k$.
Let $\bar A:=A/k$ be its augmentation. Let $A\to\bar A: a\mapsto \bar a$
be the projection.
Denote by $(\bar{\mathrm{C}}^\bullet(A),\delta)$ the reduced
Hochschild cochain complex of $A$. Let us recall that:

\begin{enumerate}
\item[$(1)$]
The {\it Gerstenhaber cup product} on
$\bar{\mathrm{C}}^\bullet(A)$
is given as follows: for any 
$f\in \bar{\mathrm{C}}^{n}(A)$ and $g \in \bar{\mathrm{C}}^{m}(A)$,
$$
f\cup g(\bar{a}_1,\ldots, \bar{a}_{n+m})
:=(-1)^{nm}
f(\bar{a}_1, \ldots, \bar{a}_{n})g(\bar{a}_{n+1},\ldots,\bar{a}_{n+m}),
$$
where $(\bar{a}_{n+1},\ldots,\bar{a}_{n+m})\in \bar{A}^{\otimes (n+m)}$.

\item[$(2)$]
For any $f\in \bar{\mathrm{C}}^{n}(A)$ and $g \in \bar{\mathrm{C}}^{m}(A)$,
let
\begin{multline*}
f {\circ} g (\bar{a}_1,\ldots, \bar{a}_{n+m-1})\\
:=
\sum^{n-1}_{i=0}(-1)^{(|g|+1)i} f(\bar{a}_1, \ldots, \bar{a}_{i},
\overline{g(\bar{a}_{i+1},\ldots, \bar{a}_{i+m})}, \bar{a}_{i+m+1},\ldots, \bar{a}_{n+m-1}),
\end{multline*}
then the {\it Gerstenhaber bracket} on $\bar{\mathrm{C}}^\bullet(A)$
is defined to be
$$
\{f,g\}:=f\circ g-(-1)^{(|f|+1)(|g|+1)}g\circ f.
$$
\end{enumerate}

\begin{theorem}[Gerstenhaber \cite{Gerstenhaber}]\label{G-algebra}
Let $A$ be an associative algebra.
Then the Hochschild cohomology $\mathrm{HH}^{\bullet}(A)$ of $A$ equipped with the
Gerstenhaber cup product and the Gerstenhaber bracket forms a Gerstenhaber algebra.
\end{theorem}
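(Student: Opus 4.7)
The plan is to verify the Gerstenhaber algebra axioms directly on the reduced Hochschild cochain complex $\bar{\mathrm{C}}^\bullet(A)$, working modulo $\delta$-exact terms wherever needed so that the identities descend to $\mathrm{HH}^\bullet(A)$. First I would observe that the cup product is strictly associative at the cochain level (this is immediate from the definition, which is essentially concatenation of arguments with a sign) and that $\delta$ is a graded derivation of $\cup$, so that $(\bar{\mathrm{C}}^\bullet(A),\cup,\delta)$ is a DG associative algebra; in particular $\cup$ descends to an associative product on cohomology. Graded commutativity fails on cochains and must be established after passing to cohomology. For this I would use the pre-Lie composition $f\circ g$ defined in the excerpt and verify the homotopy identity
$$
\delta(f\circ g) - (\delta f)\circ g - (-1)^{|f|+1} f\circ(\delta g) = (-1)^{|f|+1}\bigl(f\cup g - (-1)^{|f||g|} g\cup f\bigr),
$$
from which graded commutativity of $\cup$ on $\mathrm{HH}^\bullet(A)$ is immediate.

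Next, I would turn to the bracket $\{f,g\}=f\circ g - (-1)^{(|f|+1)(|g|+1)} g\circ f$. The key algebraic input is that $\circ$ makes $\bar{\mathrm{C}}^\bullet(A)[1]$ into a graded right pre-Lie algebra, i.e.\ for all $f,g,h$
$$
(f\circ g)\circ h - f\circ(g\circ h) = (-1)^{(|g|+1)(|h|+1)}\bigl((f\circ h)\circ g - f\circ(h\circ g)\bigr),
$$
which is a direct (if tedious) consequence of the insertion formula. A formal and well-known consequence of the pre-Lie identity is that the graded skew-symmetrization satisfies the graded Jacobi identity; together with the manifest skew-symmetry, this yields a graded Lie bracket of the correct degree on $\bar{\mathrm{C}}^\bullet(A)[1]$. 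Compatibility with $\delta$, namely $\delta\{f,g\}=\{\delta f,g\}+(-1)^{|f|+1}\{f,\delta g\}$, follows by combining the pre-Lie identity with the definition of $\delta$ as an insertion of the product of $A$, so the bracket descends to $\mathrm{HH}^\bullet(A)$.

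The remaining and most delicate step is the Poisson-type Leibniz compatibility
$$
\{f,\, g\cup h\} = \{f,g\}\cup h + (-1)^{(|f|+1)|g|}\, g\cup \{f,h\},
$$
which holds only up to an explicit $\delta$-homotopy on cochains. I would prove it by expanding both sides term by term according to where in the concatenated word $g\cup h$ the operation $f$ is inserted: the terms that insert entirely into $g$ or entirely into $h$ produce the right-hand side exactly, while the ``boundary'' terms where $f$ straddles the juncture between $g$ and $h$ must be identified with the $\delta$-boundary of a cochain built from brace-type partial insertions. Producing this bounding cochain with the correct Koszul signs is the main obstacle of the proof; once this is in place, combined with the Jacobi identity and graded commutativity of $\cup$ established above, the triple $(\mathrm{HH}^\bullet(A),\cup,\{-,-\})$ satisfies all axioms of a Gerstenhaber algebra.
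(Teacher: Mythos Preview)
Your outline is the classical Gerstenhaber argument and is correct in structure: associativity of $\cup$ on the nose, graded commutativity via the homotopy furnished by the pre-Lie composition $\circ$, the graded Jacobi identity from the pre-Lie identity for $\circ$, and the Poisson/Leibniz rule up to an explicit coboundary built from brace-type insertions. One caveat: the sign in your homotopy formula for commutativity should be checked against the paper's grading convention (here $\bar{\mathrm C}^q(A)$ sits in degree $-q$, so $\delta$ has degree $-1$), and the Koszul sign you get when $\delta$ passes through $f\circ(-)$ typically involves the shifted degree of $g$ rather than of $f$; but this is a bookkeeping matter, not a structural one.

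As for comparison with the paper: there is nothing to compare. The paper does not prove this theorem at all; it simply states it and attributes it to Gerstenhaber's original article. So your proposal is not an alternative route but rather a (correct) sketch of the proof the paper chose to omit.
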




Now let $(\bar{\mathrm C}_\bullet(A),\partial)$ be the reduced
Hochschild chain complex of $A$. 
Then
$\bar{\mathrm{C}}^\bullet(A)$ acts on
$\bar{\mathrm{C}}_\bullet(A)$, called the {\it cap product},
as follows:
for homogeneous
$f\in \bar{\mathrm{C}}^{n}(A)$ and $\alpha=(a_0,\bar{a}_1, \ldots, \bar{a}_m)\in \bar{\mathrm{C}}_{m}(A)$,
the \textit{cap product}
$
\cap: \bar{\mathrm{C}}^{n}(A)\times \bar{\mathrm{C}}_{m}(A)\to \bar{\mathrm{C}}_{m-n}(A)
$
is given by
$$
f \cap \alpha  :=(a_0f(\bar{a}_1,\ldots,\bar{a}_n),\bar{a}_{n+1},\ldots,\bar{a}_m),
$$ for
$m\geq n$,
and zero otherwise.
We denote by $\iota_{f}(-):= f\cap -$ the contraction operator, 
then $\iota_{f}\iota_{g}=(-1)^{|f||g|}\iota_{g\cup f}$.

Recall the \textit{Connes operator}
$
B: \bar{\mathrm{C}}_{\bullet}(A)\to \bar{\mathrm{C}}_{\bullet+1} (A)
$
is given by
$$
B(\alpha)
:=\sum_{i=0}^{m} (-1)^{mi} (1,  \bar{a}_i, \cdots,   \bar{a}_m, \bar{a}_0,\cdots, \bar{a}_{i-1}).
$$
It is known that $(\bar{\mathrm C}_\bullet(A), \partial, B)$ forms a mixed complex. 

From $\iota$ and $B$, we may define the Lie derivative of 
$\bar{\mathrm C}^\bullet(A)$ on
$\bar{\mathrm C}_\bullet(A)$ by $L:=[B, \iota]$.
It is then direct to see
$\partial=L_{\mu}$, where $\mu$ is the product of $A$, viewed as a 
cochain in $\bar{\mathrm C}^{-2}(A)$.

\begin{theorem}[Daletskii-Gelfand-Tsygan \cite{DGT}; Tamarkin-Tsygan \cite{TT05}]
\label{DGT}
Let $A$ be an associative algebra.
Denote by $\mathrm{HH}^\bullet(A)$
and $\mathrm{HH}_\bullet(A)$ the Hochschild
cohomology and homology of $A$ respectively. Then the following sextuple
$$
(\mathrm{HH}^{\bullet}(A),\mathrm{HH}_{\bullet}(A), \cup, \cap, \{-,-\}, B)
$$
is a differential calculus.
\end{theorem}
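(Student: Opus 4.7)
The plan is to verify the three axioms in the definition of a differential calculus. Axiom (1), that $(\mathrm{HH}^\bullet(A), \cup, \{-,-\})$ is a Gerstenhaber algebra, is precisely Theorem \ref{G-algebra} and needs no further argument.

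For axiom (2), I would first check at the chain level, by unwinding the formulas for $\cup$ and $\cap$ on a reduced Hochschild chain $(a_0, \bar{a}_1, \ldots, \bar{a}_m)$, the identity $\iota_f \iota_g = (-1)^{|f||g|} \iota_{g \cup f}$ recorded above, and also that each $\iota_f$ commutes with the Hochschild differential $\partial$ up to a sign governed by $\delta f$, so that contraction descends to homology. Upon passing to $\mathrm{HH}^\bullet$ and using the graded commutativity of $\cup$, this chain-level formula yields the clean graded module structure $\iota_f \iota_g = \iota_{f \cup g}$.

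Axiom (3) is the substantive part. The equation $B^2 = 0$ is a direct calculation using that $B$ always inserts a leading $1$ and that the cyclic telescoping of signs cancels upon a second application. The crux is to establish the two mixed compatibilities
$$
(-1)^{|f|+1}\iota_{\{f,g\}} = [L_f, \iota_g] \quad \text{and} \quad [L_f, L_g] = L_{\{f,g\}}
$$
on $\mathrm{HH}_\bullet(A)$, with $L_f := [B, \iota_f]$. My approach is to produce, at the chain level, an explicit operator $H_{f,g} \colon \bar{\mathrm C}_\bullet(A) \to \bar{\mathrm C}_{\bullet + |f| + |g| - 1}(A)$ whose Hochschild boundary $[\partial, H_{f,g}]$ equals the chain-level defect of the first identity, modeled on Rinehart's homotopy in the commutative case and generalized in \cite{DGT,TT05}. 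Concretely, $H_{f,g}$ sums over configurations in which $g$ is inserted after a cyclic Connes-style rotation and a partial contraction by $f$, with signs chosen so that the boundary collapses to the desired $\iota_{\{f,g\}}$ term modulo inner contributions. Once the first identity holds on $\mathrm{HH}_\bullet$, the second follows formally by applying $[B,-]$ and using $B^2 = 0$ together with the Jacobi identity for $\{-,-\}$ to reorganize signs.

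The principal obstacle is the construction and sign bookkeeping for the homotopy $H_{f,g}$ realizing the Cartan formula at the chain level: combining the cyclic telescoping inherent in $B$ with the positional insertions defining $\iota$ and the pre-Lie product $f \circ g$ produces many terms, and obtaining a cancellation that leaves exactly $\pm\iota_{\{f,g\}}$ requires a carefully chosen intermediate operator. This is the combinatorial heart of \cite{DGT,TT05}, and I would cite those arguments rather than redo the bookkeeping in full.
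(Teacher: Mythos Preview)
The paper does not actually prove this theorem; it is stated as a result attributed to \cite{DGT,TT05}, followed only by the remark that ``on the chain level, some identities in the above theorem only hold up to homotopy, which is highly nontrivial.'' Your outline is consistent with that remark and is essentially the strategy of the cited references: axiom (1) is Gerstenhaber's theorem, axiom (2) is a direct chain-level check, and axiom (3) requires an explicit Rinehart-type homotopy realizing the Cartan identity $(-1)^{|f|+1}\iota_{\{f,g\}} = [L_f,\iota_g]$ at the chain level, from which $[L_f,L_g]=L_{\{f,g\}}$ follows formally. So your proposal is correct and goes further than the paper itself, which simply defers to the literature.
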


We remark that on the chain level, some identities in the above theorem
only hold up to homotopy, which is highly nontrivial.

In the rest of this section, 
we give two examples of differential calculus with duality,
one from Calabi-Yau algebras and the other from symmetric Frobenius algebras.
They are obtained by Lambre in \cite{Lambre}, which, in particular, put the
Batalin-Vilkovisky algebra structure, first obtained by Ginzburg 
\cite[Theorem 3.4.3]{Ginzburg} and Tradler
\cite[Theorem 1]{Tradler}, in a general framework.
A new observation here, if there is any, is that these two examples
satisfy the condition of Theorem \ref{BVcriterion}, and hence we 
obtain a gravity algebra structure on the associated negative cyclic homology.

\subsection{Calabi-Yau algebras}\label{subsect:CY}

The notion of Calabi-Yau algebra is introduced by Ginzburg in \cite{Ginzburg}.
In this subsection, we briefly 
recall the differential calculus structure on Calabi-Yau algebras.

\begin{definition}[Calabi-Yau algebra]
An algebra $A$ is called a \textit{Calabi-Yau algebra of dimension $d$} (or $d$-Calabi-Yau algebra)
if
\begin{enumerate}
\item[$(1)$] $A$ is homologically smooth,
that is, $A$, viewed as an $A^e$-module, has a bounded resolution
by finitely generated projective $A^e$-modules,
and
\item[$(2)$] there is an isomorphism
\begin{equation}\label{CY_cond}
\eta:\mathrm{RHom}_{A^e}(A, A\otimes A)\rightarrow \Sigma^{-d}A
\end{equation}
in the derived category $D(A^e)$ of (left) $A^e$-modules,
where $A^e$ is the enveloping algebra
of $A$ and $\Sigma^{-d}(-)$ is the $d$-fold desuspension functor.
\end{enumerate}
\end{definition}

\begin{theorem}[de Thanhoffer de V\"olcsey-Van den Bergh \cite{dTdVVdB}, 
Lambre \cite{Lambre}]
\label{thm:differentialcalculuswithdualityforCY}
If $A$ is $n$-Calabi-Yau, then there
exists a volume form $\Omega\in\mathrm{HH}_n(A)$ such that
$$
\begin{array}{cccl}
\mathrm{PD:}&\mathrm{HH}^i(A)&\longrightarrow&\mathrm{HH}_{n-i}(A)\\
&f&\longmapsto&f\cap\Omega
\end{array}
$$
is an isomorphism. That is,
$$
(\mathrm{HH}^{\bullet}(A),\mathrm{HH}_{\bullet}(A), \cup, \cap, \{-,-\}, B)
$$
is in fact a differential calculus with duality.
\end{theorem}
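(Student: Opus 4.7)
The plan is to produce the volume form $\Omega$ directly from the Calabi-Yau quasi-isomorphism, identify $\mathrm{PD}$ with capping against $\Omega$, and verify the two normalization conditions $\Omega \cap 1 = \Omega$ and $B(\Omega) = 0$.

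First, I would invoke Van den Bergh's derived Poincar\'e duality. Starting from the defining quasi-isomorphism $\eta \colon \mathrm{RHom}_{A^e}(A, A\otimes A) \to \Sigma^{-d} A$ in $D(A^e)$, apply $- \otimes^{L}_{A^e} A$ on both sides. The right-hand side becomes $\Sigma^{-d}(A \otimes^{L}_{A^e} A) = \Sigma^{-d}\mathrm{HH}_\bullet(A)$, while the left-hand side, by homological smoothness, is naturally isomorphic to
$$\mathrm{RHom}_{A^e}(A, A^e \otimes^{L}_{A^e} A) = \mathrm{RHom}_{A^e}(A, A) = \mathrm{HH}^\bullet(A).$$
Composing yields an abstract isomorphism $\mathrm{HH}^i(A) \cong \mathrm{HH}_{d-i}(A)$, and I define $\Omega$ to be the image of the unit $1 \in \mathrm{HH}^0(A)$.

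Second, I would identify this abstract map with $f \mapsto f \cap \Omega$. The strategy is to fix a finitely generated projective $A^e$-resolution $P_\bullet \to A$ and write everything at the chain level: a cocycle $f \in \mathrm{Hom}_{A^e}(P_\bullet, A)$ is evaluated against a chain-level representative $\omega$ of $\Omega$ in $P_\bullet \otimes_{A^e} A$, and a standard diagram chase (comparing $P_\bullet$ with the bar resolution) shows this evaluation coincides, up to signs, with the cap product $f \cap \Omega$. Bijectivity of $\mathrm{PD}$ is then immediate because it comes from a quasi-isomorphism, and $\Omega \cap 1 = \Omega$ is automatic since $1$ is the unit of $\cup$.

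The main obstacle will be showing $B(\Omega) = 0$: a priori the class $\Omega$ is only defined up to a $b$-boundary, and Connes's operator need not respect this ambiguity on any single representative. Here one must upgrade the Calabi-Yau quasi-isomorphism from $D(A^e)$ to a quasi-isomorphism of mixed (equivalently, $S^1$-equivariant) complexes, so that the Connes operator $B$ on $\mathrm{HH}_\bullet(A)$ matches a compatible operator on the cohomological side under $\mathrm{PD}$. Once such a mixed-complex lift is available, the class $1 \in \mathrm{HH}^0(A)$ is trivially $B$-closed, and its image $\Omega$ inherits $B$-closedness. Combining this with Theorem~\ref{DGT}, which already supplies the ambient differential calculus $(\mathrm{HH}^\bullet(A), \mathrm{HH}_\bullet(A), \cup, \cap, \{-,-\}, B)$, completes the verification that we have a differential calculus with duality in the sense of Lambre.
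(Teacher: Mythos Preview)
The paper does not supply its own proof of this theorem; it simply cites de Thanhoffer de V\"olcsey--Van den Bergh \cite[Proposition~5.5]{dTdVVdB} and Lambre \cite{Lambre}. Your first three steps---deriving the abstract isomorphism from the Calabi--Yau data via $-\otimes^L_{A^e}A$, identifying it with $f\mapsto f\cap\Omega$, and checking the unit normalization---are the standard Van den Bergh argument and match what those references do.

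The gap is in your handling of $B(\Omega)=0$. Your plan to ``upgrade the Calabi--Yau quasi-isomorphism from $D(A^e)$ to a quasi-isomorphism of mixed complexes'' and then use that $1\in\mathrm{HH}^0(A)$ is $B$-closed is circular as written. The Hochschild \emph{cochain} complex carries no intrinsic Connes-type operator; the only natural candidate for one is $\Delta:=\mathrm{PD}^{-1}\circ B\circ\mathrm{PD}$, whose very definition presupposes the duality you are constructing. Declaring $1$ to be closed under this transported operator is then literally equivalent to the statement $B(\Omega)=0$ you are trying to prove, so nothing has been shown. Nor is there an evident mixed structure on $\mathrm{RHom}_{A^e}(A,A^e)$ in $D(A^e)$ to which one could lift $\eta$.

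The arguments actually used in the cited references are different. For an ungraded algebra one can simply argue by degree: $B(\Omega)\in\mathrm{HH}_{n+1}(A)\cong\mathrm{HH}^{-1}(A)=0$. More robustly, and closer in spirit to \cite{dTdVVdB}, one shows that the volume class lifts to negative cyclic homology, i.e.\ $\Omega=\pi_*(\widetilde\Omega)$ for some $\widetilde\Omega\in\mathrm{HC}^-_n(A)$; then by the Lemma preceding the proof of Theorem~\ref{mainthm} one has $B(\Omega)=\pi_*\beta\pi_*(\widetilde\Omega)=0$ since $\beta\circ\pi_*=0$ in the long exact sequence~\eqref{longexseq:Connes}. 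Replacing your fourth step with either of these would close the gap.
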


For a proof of this statement, see \cite[Proposition 5.5]{dTdVVdB} and also \cite{Lambre}.
Note that $\mathrm{HH}_\bullet(A)$ is the $b$-homology of the mixed complex
$(\bar{\mathrm C}_\bullet(A), b, B)$, which satisfies the condition of Theorem
\ref{BVcriterion}, and hence we obtain
a gravity algebra structure on its negative cyclic homology, which may be viewed
as induced from the Batalin-Vilkovisky structure on the Hochschild cohomology.

\subsection{Symmetric Frobenius algebras}

For an associative algebra $A$,
denote $A^*:=\mathrm{Hom}(A, k)$, and then $A^*$ is an $A$-bimodule.
Let $\bar{\mathrm C}^\bullet(A; A^*)$ be the reduced Hochschild cochain complex
of $A$ with values in $A^*$.
Under the identities 
\begin{equation}\label{idofHochschildcochainwithchain}
\bar{\mathrm{C}}^\bullet(A; A^*)=\bigoplus_{n\geq 0}\mathrm{Hom}(\bar{A}^{\otimes n}, A^*)
=\bigoplus_{n\geq 0}\mathrm{Hom}(A\otimes \bar{A}^{\otimes n}, k)=
\mathrm{Hom}(\bar{\mathrm C}_\bullet(A), k),
\end{equation}
it is proved, for example, in \cite[\S2.4]{Loday}, that
the Hochschild coboundary of the leftmost is identical with
the dual of the Hochschild boundary of the rightmost.
Moreover, via the above identity,
one may equip on $\bar{\mathrm{C}}^\bullet(A; A^*)$
the dual Connes differential, which is denoted by $B^*$, 
i.e., $B^{*}(g):=(-1)^{|g|}g\circ B$ for homogeneous $g \in \bar{\mathrm{C}}^\bullet(A; A^*)$.
It is then direct to see that $B^*$ has square zero and 
commutes with the Hochschild coboundary operator.

The Hochschild cochain complex $\bar{\mathrm C}^\bullet(A)$
acts on $\bar{\mathrm C}^\bullet(A; A^*)$ via the following ``cap product" 
$$
\begin{array}{cccl}
\cap^*:&\bar{\mathrm{C}}^{\bullet}(A)\times \bar{\mathrm{C}}^{\bullet}(A; A^*)
&\stackrel{\cap^{\ast}}{\longrightarrow}&\bar{\mathrm{C}}^\bullet(A; A^*)\\
&(f, \quad\quad \alpha)&\longmapsto&
(-1)^{|f||\alpha|}\alpha\circ \iota_f,
\end{array}
$$
which respects the Hochschild coboundaries. We thus obtain the following.

\begin{theorem}
\label{dual-Hoch-DC}
Let $A$ be an associative algebra.
Then
$$
(\mathrm{HH}^\bullet(A),\mathrm{HH}^\bullet(A; A^*),\cup,\cap^{\ast},\{-,-\}, B^*)
$$
is a differential calculus.
\end{theorem}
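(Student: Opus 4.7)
The plan is to derive this differential calculus as the transpose, via the identification \eqref{idofHochschildcochainwithchain}, of the differential calculus on $(\mathrm{HH}^\bullet(A),\mathrm{HH}_\bullet(A))$ supplied by Theorem \ref{DGT}. The key observation is that under $\bar{\mathrm C}^\bullet(A;A^*)\cong\mathrm{Hom}(\bar{\mathrm C}_\bullet(A),k)$, the operations $B^*$ and $\cap^*$ are, up to signs dictated by the Koszul rule, the $k$-linear transposes of $B$ and the contraction $\iota_f$ on Hochschild chains. Since the chain-level operations satisfy the differential calculus identities (either strictly or up to homotopies that themselves dualize), transposing and passing to cohomology yields the required identities on $\mathrm{HH}^\bullet(A;A^*)$.

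Concretely I would verify the three axioms in turn. Condition (1) is immediate from Theorem \ref{G-algebra}. For condition (2), writing $\iota^*_f(\alpha):= f\cap^*\alpha$, a direct sign computation gives
\[ \iota^*_f\iota^*_g(\alpha) \;=\; (-1)^{(|f|+|g|)|\alpha|+|f||g|}\,\alpha\circ\iota_g\iota_f, \]
which matches $\iota^*_{f\cup g}(\alpha)$ once one invokes the chain-level identity $\iota_{f\cup g}=(-1)^{|f||g|}\iota_g\iota_f$. For condition (3), $(B^*)^2=0$ is immediate from $B^2=0$, and its commutation with the Hochschild coboundary is observed just above the statement. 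Setting $L^*_f:=[B^*,\iota^*_f]$, the two Cartan-type identities
\[ [L^*_f,\iota^*_g]\;=\;(-1)^{|f|+1}\iota^*_{\{f,g\}}, \qquad [L^*_f,L^*_g]\;=\;L^*_{\{f,g\}} \]
are obtained as the transposes of the corresponding chain-level identities in Theorem \ref{DGT}. Because \eqref{idofHochschildcochainwithchain} is a chain-level isomorphism commuting with the boundary and with Connes' operator, and because we work over a field (so that $\mathrm H^\bullet\circ\mathrm{Hom}(-,k)=\mathrm{Hom}(-,k)\circ\mathrm H_\bullet$), these identities descend faithfully to $\mathrm{HH}^\bullet(A;A^*)$.

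The main obstacle is the bookkeeping of Koszul signs rather than any genuinely new algebraic content: the sign conventions in the definitions of $B^*$ and $\cap^*$ are rigged precisely so that the transposes of the identities of Theorem \ref{DGT} take exactly the form required by the differential calculus axioms, but one must track the signs carefully through each composition and through the transposition (which reverses order). Once the signs have been verified once and for all, the theorem reduces to the already-established calculus on $(\mathrm{HH}^\bullet(A),\mathrm{HH}_\bullet(A))$.
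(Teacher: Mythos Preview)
Your proposal is correct and follows essentially the same approach as the paper: both derive the differential calculus on $(\mathrm{HH}^\bullet(A),\mathrm{HH}^\bullet(A;A^*))$ by transposing the one on $(\mathrm{HH}^\bullet(A),\mathrm{HH}_\bullet(A))$ from Theorem~\ref{DGT} through the identification~\eqref{idofHochschildcochainwithchain}. The only difference is cosmetic: the paper packages the verification of axioms (2) and (3) via Remark~\ref{rem:comparisonofPoisson} (the dual of a Gerstenhaber module is again a Gerstenhaber module under the adjoint action, and $L^*=[B,\iota]^*=[B^*,\iota^*]$), whereas you spell out each transposed identity and its sign bookkeeping explicitly.
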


\begin{proof}
We already know from Theorem \ref{DGT} that
$$
(\mathrm{HH}^{\bullet}(A),\mathrm{HH}_{\bullet}(A), \cup, \cap, \{-,-\}, B)
$$
is a differential calculus.
Note that by \eqref{idofHochschildcochainwithchain},
$\mathrm{HH}^\bullet(A; A^*)$
is the linear dual of $\mathrm{HH}_\bullet(A)$, and therefore
by Remark \ref{rem:comparisonofPoisson},
$\mathrm{HH}^\bullet(A; A^*)$ is a Gerstenhaber module over
$\mathrm{HH}^\bullet(A)$ given by the adjoint action.
Since $L^*=[B, \cap]^*=[B^*,\cap^*]$, 
the differential calculus structure is obtained as desired.
\end{proof}

The above theorem can be directly generalized to graded algebras without any difficulty.
Now let us recall that a graded associative algebra $A$ is called {\it symmetric Frobenius} 
(or {\it cyclic associative}) 
of degree $n$
if there is a non-degenerate bilinear pairing
$$
\langle-,-\rangle: A\times A\to k
$$
which is cyclically invariant; that is,
$\langle a\hdot b, c\rangle=(-1)^{|c|(|a|+|b|)}\langle c\hdot a,b\rangle$,
for all homogeneous $a,b,c\in A$.
It is direct to see that this is equivalent to a degree $n$
isomorphism of $A$-bimodules
$$
\eta: A\to A^*,
$$
which is completely determined by the image of $1$.
Viewing $\eta\in\bar{\mathrm{C}}^{n}(A; A^*)$,
then that $\eta$ is an $A$-bimodule 
map means $\delta(\eta)=0$ and hence $\eta$
is a Hochschild cocycle. The isomorphism
of $\eta$ in fact means $[\eta]\ne 0$.
Let
$$
\mathrm{PD}:\mathrm{HH}^\bullet(A)\to\mathrm{HH}^{\bullet-n}(A; A^*)
$$
be the composition
$$
\mathrm{Hom}(\bar A^{\otimes q}, A)
\stackrel{\eta\circ-}{\longrightarrow}
\mathrm{Hom}(\bar A^{\otimes q}, A^*),
$$
which passes to the cohomology level,
then one obtains the following.

\begin{theorem}[Lambre \cite{Lambre}]\label{thm:differentialcalculuswithdualityforFrob}
Suppose $A$ is a symmetric Frobenius algebra.
Then $$(\mathrm{HH}^\bullet(A), \mathrm{HH}^\bullet(A; A^*))$$ forms a 
differential calculus with duality.
\end{theorem}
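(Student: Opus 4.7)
The plan is to take the Frobenius isomorphism $\eta: A \to A^*$ itself as the volume form, with $n$ the degree of the Frobenius pairing. Via \eqref{idofHochschildcochainwithchain}, we regard $\eta \in \mathrm{Hom}(A, A^*) = \mathrm{Hom}(A \otimes A, k)$ as a Hochschild cochain in $\bar{\mathrm{C}}^{\bullet}(A; A^*)$; that $\eta$ is an $A$-bimodule map is precisely $\delta(\eta)=0$, so $\eta$ is a cocycle and defines a class $[\eta] \in \mathrm{HH}^{\bullet}(A;A^*)$. The normalization $1 \cap^* \eta = \eta$ is immediate from the formula for $\cap^*$ together with $\iota_1 = \mathrm{id}$. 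By Theorem \ref{dual-Hoch-DC} the sextuple $(\mathrm{HH}^\bullet(A),\mathrm{HH}^\bullet(A;A^*),\cup,\cap^*,\{-,-\},B^*)$ already carries the differential calculus structure, so it remains to verify $B^*(\eta) = 0$ and that $\mathrm{PD}$ is an isomorphism.

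For $B^*(\eta)=0$: unwinding $B^*(\eta)=(-1)^{|\eta|}\eta\circ B$ via \eqref{idofHochschildcochainwithchain}, the vanishing reduces to a computation on a chain $(a_0,\bar a_1,\ldots,\bar a_m)$. Applying $\eta$ to each cyclically permuted term produced by $B$ yields values of the form $\langle 1, a_i a_{i+1}\cdots a_{i-1}\rangle$, which all coincide with $\langle 1, a_0\cdots a_m\rangle$ up to the signs dictated by the cyclic invariance $\langle ab,c\rangle = (-1)^{|c|(|a|+|b|)}\langle ca,b\rangle$. Combined with Connes's alternating signs in $B$, these force cancellation; this is the classical fact that the trace $\tau=\langle 1,-\rangle$ associated to a symmetric Frobenius form is a cyclic cocycle.

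For the isomorphism $\mathrm{PD}: \mathrm{HH}^\bullet(A) \to \mathrm{HH}^{\bullet-n}(A;A^*)$: at the cochain level, postcomposition with the graded vector space isomorphism $\eta: A \to A^*$ yields a graded isomorphism $\mathrm{Hom}(\bar A^{\otimes q},A) \cong \mathrm{Hom}(\bar A^{\otimes q}, A^*)$ for each $q$. Since $\eta$ represents an $A$-bimodule map at the cocycle level, this postcomposition intertwines the Hochschild coboundaries on the two complexes, and so descends to an isomorphism on Hochschild cohomology. One then checks, up to the standard sign, that this map agrees with $(-)\cap^*[\eta]$, which is the desired Poincar\'e duality.

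The main obstacle is the sign-tracking in $B^*(\eta)=0$: the geometric idea (cyclicity of the trace) is classical, but with the graded conventions used here (cohomological grading negated, $|B|=1$, $B^*(g)=(-1)^{|g|}g\circ B$, and the nonzero internal degree $n$ of $\eta$), one must carefully confirm that the cyclic signs from the Frobenius pairing combine correctly with the signs in $B$ to produce the claimed cancellation. Once this is done, the remaining verifications are essentially formal, and the Frobenius structure yields precisely the axioms of a differential calculus with duality.
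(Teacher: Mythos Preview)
Your overall approach is the same as the paper's: take the Frobenius isomorphism $\eta$ as the volume class, note that the bimodule condition gives $\delta(\eta)=0$, and realize $\mathrm{PD}$ as postcomposition with $\eta$. The paper records exactly this setup in the paragraph preceding the theorem and then cites Lambre for the remainder.

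There is, however, a concrete confusion in your treatment of $B^*(\eta)=0$. You place the volume form in $\mathrm{Hom}(A,A^*)=\mathrm{Hom}(A\otimes A,k)$, but this space is not a summand of the reduced complex $\bar{\mathrm C}^\bullet(A;A^*)=\bigoplus_{q\ge 0}\mathrm{Hom}(\bar A^{\otimes q},A^*)$. The volume form is rather $\eta(1)\in A^*=\bar{\mathrm C}^0(A;A^*)$ (carrying internal degree $n$), which under \eqref{idofHochschildcochainwithchain} is a functional on $\bar{\mathrm C}_0(A)=A$ alone. Since $B^*(g)=\pm\, g\circ B$ and $B$ raises the Hochschild chain degree by one, $B^*(\eta(1))$ would have to be a functional on $\bar{\mathrm C}_{-1}(A)=0$; hence $B^*(\eta(1))=0$ for trivial degree reasons. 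Your proposed cyclic-cocycle argument computes $\eta\circ B$ on chains $(a_0,\bar a_1,\ldots,\bar a_m)$ of arbitrary length and then appeals to sign cancellation among the cyclically permuted terms, but no such computation ever arises: the volume form pairs only with chains of length zero, so there is exactly one term and nothing to cancel. What you flag as ``the main obstacle'' is in fact no obstacle at all, and the sign-tracking you worry about is unnecessary.
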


Again, $\mathrm{HH}^\bullet(A; A^*)$ is the $\delta$-cohomology of
the mixed complex $(\bar{\mathrm C}^\bullet(A; A^*), \delta, B^*)$,
and by Theorem \ref{BVcriterion}, we obtain a gravity algebra structure 
on its cyclic cohomology,
induced from the Batalin-Vilkovisky structure on the Hochschild cohomology.

\begin{remark}\label{rmk:obstructednessofdeformation}
(1) The Batalin-Vilkovisky algebra structure on the Hochschild
cohomology has been obtained by Tradler \cite{Tradler} (see also Menichi \cite{Menichi}
and Ward \cite{Ward2}).

(2) The two graded Lie algebras that are contained in the gravity algebras after Theorems 
\ref{thm:differentialcalculuswithdualityforCY} and \ref{thm:differentialcalculuswithdualityforFrob}
are nontrivial in general. 
In fact, it is proved by
de Thanhoffer de V\"olcsey and Van den Bergh \cite{dTdVVdB} 
and Terilla and Tradler in \cite{TT} that
the DG Lie algebras that control the deformations of the 
Calabi-Yau algebras and symmetric Frobenius algebras,
are quasi-isomorphic to the negative cyclic complex and cyclic cochain complex
of these two algebras respectively;
the Lie brackets on the homology level are exactly identical to the Lie brackets
contained in the gravity algebra structure.
\end{remark}

\section{Unimodular Poisson and Frobenius Poisson algebras}

In this section, we show that the Poisson chain and cochain
complexes of unimodular Poisson and unimodular Frobenius Poisson
algebras satisfy the condition of Theorem \ref{BVcriterion}.
The differential calculus with duality structure that appears in these
two cases are already implicit in Xu \cite{Xu} and Zhu, Van Oystaeyen and Zhang
\cite{ZVOZ}, where
the Batalin-Vilkovisky algebra are also shown.
Therefore the associated
negative cyclic Poisson homology (respectively cyclic Poisson cohomology) 
has a gravity algebra structure.
This completes the cases (3) and (4) in \S\ref{Sect_intro}.

Suppose $A$ is a (possibly graded)
commutative algebra, and $M$ an $A$-bimodule.
Let $\Omega^p(A)$ be the set of $p$-th K\"ahler differential forms of $A$,
and $\mathfrak X^{p}(A; M)$ be the set of skew-symmetric multilinear maps
$A^{\otimes p}\to M$ which are derivations in each argument. 
There is an isomorphism of left $A$-modules
$$
\mathfrak X^{p}(A; M)\cong\mathrm{Hom}_A(\Omega^p(A), M).
$$
If $M=A$, then we simply write
$\mathfrak X^{p}(A; M)$
as
$\mathfrak X^{p}(A)$.

\begin{definition}[Poisson homology; Koszul \cite{Koszul}]
Suppose $A$ is a Poisson algebra with the Poisson structure $\pi$.
Denote by $\Omega^p(A)$ the set of $p$-th K\"ahler differential forms of $A$.
Then the {\it Poisson chain complex} of $A$, denoted by $\mathrm{CP}_\bullet(A)$,
is
\begin{equation}
\xymatrix{
\cdots\ar[r]&\Omega^{p+1}(A)\ar[r]^{\partial}&\Omega^{p}(A)\ar[r]^{\partial}
&\Omega^{p-1}(A)\ar[r]^{\partial}&\cdots\ar[r]&\Omega^0(A)=A,
}
\end{equation}
where $\partial$ is given by
\begin{eqnarray*}
\partial(f_0 df_1\wedge \cdots\wedge df_p)&=&\sum_{i=1}^p(-1)^{i-1}\{f_0, f_i\} df_1\wedge\cdots \widehat{df_i}\cdots \wedge df_p\\
&+&\sum_{1\le i<j\le p}(-1)^{j-i}f_0 d\{f_i,f_j\}\wedge df_1\wedge\cdots \widehat{df_i}\cdots\widehat{df_j}\cdots \wedge df_p.
\end{eqnarray*}
The associated homology is called the {\it Poisson homology} of $A$, and is denoted by
$\mathrm{HP}_\bullet(A)$.
\end{definition}

In the above definition, $\{f_i, f_j\}$ means $\pi(f_i, f_j)$.
It is direct to check (see also Xu \cite{Xu}) that $\partial$ commutes with
the de Rham differential $d$, and therefore
$(\Omega^\bullet(A), \partial, d)$
is a mixed complex.

\begin{definition}[Poisson cohomology; Lichnerowicz \cite{Lichnerowicz}]
Suppose $A$
is a Poisson algebra and $M$ is a left Poisson $A$-module.
Let
$
\mathfrak X^{p}(A; M)
$
be the space of skew-symmetric multilinear maps
$A^{\otimes p}\to M$ that are derivations in each argument.
The {\it Poisson cochain complex} of $A$ with values in $M$, denoted by $\mathrm{CP}^\bullet(A; M)$,
is the cochain complex
$$
\xymatrix{
M=\mathfrak X^0(A; M)\ar[r]^-{\delta}&\cdots\ar[r]&\mathfrak X^{p-1}(A; M)
\ar[r]^-{\delta}&
\mathfrak X^{p}(A; M)\ar[r]^-{\delta}&\cdots
}$$
where
$\delta$ is given by
\begin{eqnarray*}
\delta(P)(f_0, f_1,\cdots, f_p)&:=&\sum_{0\le i\le p}(-1)^i\{f_i, P(f_0,\cdots, \widehat{f_i},\cdots, f_p)\}\\
&+&\sum_{0\le i<j\le p}(-1)^{i+j}P(\{f_i, f_j\}, f_1, \cdots, \widehat{f_i},\cdots, \widehat{f_j},\cdots, f_p).
\end{eqnarray*}
The associated cohomology is called the {\it Poisson cohomology} of $A$ with values in $M$, and is denoted
by $\mathrm{HP}^\bullet(A; M)$.
In particular, if $M=A$, then the cohomology is just called the {\it Poisson cohomology} of $A$, and is simply
denoted by $\mathrm{HP}^\bullet(A)$.
\end{definition}

Note that in the above definition, the Poisson cochain complex, viewed as a chain complex,
is negatively graded,
and the coboundary $\delta$ has degree $-1$. However, by our convention,
the Poisson cohomology is positively graded.
In the following, we present two versions of differential calculus 
on the Poisson (co)homology.

\subsection{Unimodular Poisson algebras}\label{subsect:unimodularPoisson}

Given a commutative algebra $A$, 
we have the following operations on $\mathfrak{X}^\bullet(A)$ and $\Omega^\bullet(A)$:
\begin{enumerate}
\item[(1)] Wedge (cup) product:
suppose $P\in\mathfrak X^{p}(A)$ and $Q\in\mathfrak X^{q}(A)$,
then the {\it wedge product} of $P$ and $Q$, denoted by $P\wedge Q$, is a polyvector
in $\mathfrak X^{p+q}(A)$, defined by
$$
(P\wedge Q)(f_1,f_2,\cdots, f_{p+q}):=
\sum_{\sigma\in S_{p,q}}\mathrm{sgn}(\sigma)P(f_{\sigma(1)},\cdots, f_{\sigma(p)})
\cdot
Q(f_{\sigma(p+1)},\cdots, f_{\sigma(p+q)}),
$$
where $\sigma$ runs over all $(p,q)$-shuffles of $(1,2,\cdots, p+q)$.

\item[(2)] Schouten bracket:
suppose $P\in\mathfrak X^{p}(A)$ and $Q\in\mathfrak X^{q}(A)$,
then their {\it Schouten bracket}
is an element in $\mathfrak{X}^{p+q-1}(A)$, which is denoted by
$[P,Q]$ and is given by
\begin{multline*}
[P, Q](f_1,f_2,\cdots, f_{p+q-1}):=\sum_{\sigma\in S_{q,p-1}}\mathrm{sgn}(\sigma)
P\big(Q(f_{\sigma(1)},\cdots, f_{\sigma(q)}),f_{\sigma(q+1)},\cdots, f_{\sigma(q+p-1)}\big)\\
-(-1)^{(p-1)(q-1)}\sum_{\sigma\in S_{p,q-1}}\mathrm{sgn}(\sigma)
Q\big(P(f_{\sigma(1)},\cdots, f_{\sigma(p)}),f_{\sigma(p+1)},\cdots,f_{\sigma(p+q-1)}\big).
\end{multline*}

\item[(3)] Contraction (inner product): suppose $P\in\mathfrak X^{p}(A)$ and $\omega=df_1\wedge \cdots\wedge df_n\in\Omega^n(A)$,
then
the {\it contraction} (also called {\it inner product} or {\it internal product}) of $P$ with $\omega$, denoted by
$\iota_P(\omega)$, is an $A$-linear map
with values in $\Omega^{n-p}(A)$
given by
\begin{equation}\label{def:innerproductofvectorfields}
\iota_P(\omega)=\left\{
\begin{array}{cl}\displaystyle
\sum_{\sigma\in S_{p,n-p}}\mathrm{sgn}(\sigma)
P(f_{\sigma(1)},\cdots, f_{\sigma(p)})df_{\sigma(p+1)}\wedge\cdots \wedge df_{\sigma(n)},&\mbox{if}\; n\ge p,\\
0,&\mbox{otherwise.}
\end{array}
\right.
\end{equation}
\end{enumerate}


\begin{proposition}\label{DC-on-Poisson}
Suppose $A$ is a Poisson algebra. Then 
$$
(\mathrm{HP}^\bullet(A), \mathrm{HP}_\bullet(A), \wedge,\iota,[-,-], d)
$$
forms a differential calculus, where $d$ is the de Rham differential.
\end{proposition}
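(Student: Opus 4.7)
The plan is to verify the three axioms of a differential calculus in turn, at the chain level, and then descend to (co)homology. This is essentially the Koszul--Brylinski Cartan calculus on polyvectors and K\"ahler forms adapted to the algebraic Poisson setting, so much of the work consists in citing or adapting classical identities.

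First, I would establish that $(\mathfrak{X}^\bullet(A), \wedge, [-,-])$ is a Gerstenhaber algebra: this is the Schouten--Nijenhuis calculus on polyderivations, and it is standard that the wedge product is graded commutative, the Schouten bracket is a graded Lie bracket, and the two are compatible by a graded Leibniz identity. If $\pi \in \mathfrak{X}^2(A)$ denotes the Poisson bivector, then the Jacobi identity for the Poisson bracket is equivalent to $[\pi,\pi]=0$, and the Poisson cohomology differential coincides with $\delta = [\pi,-]$. By the graded Jacobi identity of the Schouten bracket and the Leibniz rule, $\delta$ is then a derivation of both $\wedge$ and $[-,-]$, so these two operations descend to $\mathrm{HP}^\bullet(A)$ and the Gerstenhaber axioms are inherited. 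This takes care of condition (1).

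Second, for condition (2), I would check at the chain level that the contraction \eqref{def:innerproductofvectorfields} gives an action of $(\mathfrak{X}^\bullet(A),\wedge)$ on $\Omega^\bullet(A)$, i.e.\ $\iota_{P\wedge Q} = \iota_P \iota_Q$ (with appropriate sign), which is a direct computation on decomposable forms and polyvectors. One then verifies compatibility of $\iota$ with $\partial$ and $\delta$ so that it descends to a pairing $\mathrm{HP}^\bullet(A) \otimes \mathrm{HP}_\bullet(A) \to \mathrm{HP}_\bullet(A)$. The key chain-level identity here is the Koszul--Brylinski formula $\partial = [d, \iota_\pi] = L_\pi$, from which the necessary commutation relations follow once we have the Cartan-type identities in the next step.

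Third, for condition (3), $d$ is the de Rham differential, so $d^2 = 0$ is automatic. The Cartan magic formulas
\begin{equation*}
[L_P, \iota_Q] \;=\; (-1)^{|P|+1}\,\iota_{[P,Q]},\qquad
[L_P, L_Q] \;=\; L_{[P,Q]},
\end{equation*}
for $L_P := [d, \iota_P]$, hold at the level of $\mathfrak{X}^\bullet(A)$ acting on $\Omega^\bullet(A)$ — these are classical for smooth manifolds and transfer without change to the algebraic setting using $\Omega^\bullet(A)$, since the proofs reduce to checking on generators $f, df$ by the derivation property in both arguments. One then observes that because $\partial = L_\pi$ and $\delta = [\pi, -]$, all of these identities are already compatible with the Poisson differentials, hence pass to $\mathrm{HP}^\bullet$ and $\mathrm{HP}_\bullet$.

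The main technical obstacle is the sign bookkeeping needed to show that the chain-level Cartan identities have the precise form demanded by the axioms of a differential calculus (e.g.\ the sign in $(-1)^{|P|+1}\iota_{[P,Q]}$), and that $\iota$, $L$, and $d$ actually descend unambiguously to (co)homology. Once the identification $\partial = L_\pi$ is in hand, the compatibility of $\iota$ with both $\partial$ and $\delta$ is a consequence of the Cartan formulas applied to $\pi$, so the proof reduces to a careful but routine verification; for this reason I would largely refer to the chain-level computations in Xu \cite{Xu} (and \cite{ZVOZ}) rather than redo them.
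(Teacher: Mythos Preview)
Your proposal is correct and matches the paper's approach: the paper's proof is simply a one-line reference to \cite[Chapter~3]{LGPV} for the fact that the wedge product, Schouten bracket, contraction, and de Rham differential respect the Poisson (co)boundaries, while you spell out the same strategy in more detail---establishing the chain-level Cartan identities (Gerstenhaber structure, $\iota_{P\wedge Q}=\iota_P\iota_Q$, $[L_P,\iota_Q]=(-1)^{|P|+1}\iota_{[P,Q]}$, $[L_P,L_Q]=L_{[P,Q]}$), identifying $\partial=L_\pi$ and $\delta=[\pi,-]$, and then passing to (co)homology. There is no substantive difference in method, only in the level of explicitness.
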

\begin{proof}
We only need to show the operations given in the proposition respect
the Poisson boundary and coboundary, which can be found in, for example,
\cite[Chapter 3]{LGPV}.
\end{proof}

In the above proposition, let $L:=[d, \iota]$. Then similar to the Hochschild
case,
$\partial=L_{\pi}$, where $\pi$ is the Poisson structure.

For a Poisson algebra $A$, suppose there exists a form $\eta\in\Omega^n(A)$ such that
$\iota_{(-)}\eta:\mathfrak X^\bullet (A)\to\Omega^{n+\bullet}(A)$ is an isomorphism, 
then $\eta$ is called a {\it volume form}.
In this case, we have the following diagram
\begin{equation}\label{diag:unimodularPoisson}
\xymatrixcolsep{4pc}
\xymatrix{
\mathfrak X^\bullet(A)\ar[r]^-{\iota_{(-)}\eta}&\Omega^{n+\bullet}(A)\\
\mathfrak X^{\bullet-1}(A)\ar[r]^-{\iota_{(-)}\eta}\ar[u]_{\delta}&\Omega^{n+\bullet-1}(A),\ar[u]_{\partial}
}
\end{equation}
which may not be commutative, i.e., $\eta$ may not be a Poisson cycle.
If there exists a volume form $\eta$ such that the above diagram commutes, then we say $A$
is {\it unimodular}.

\begin{theorem}\label{thm:differentialcalculusstructureofunimodularPoisson}
Suppose $A$ is a unimodular Poisson algebra. Then 
$$
(\mathrm{HP}^\bullet(A), \mathrm{HP}_\bullet(A), \wedge,\iota,[-,-], d)
$$
forms a differential calculus with duality.
\end{theorem}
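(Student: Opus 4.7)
The plan is to build on Proposition~\ref{DC-on-Poisson} (which already supplies the underlying differential calculus structure) and use unimodularity to extract the Poincar\'e duality. Since the differential calculus axioms are already in hand, what remains is to exhibit a class $\eta \in \mathrm{HP}_n(A)$ satisfying the three additional requirements: the normalization $\iota_1 \eta = \eta$, the closedness $d\eta = 0$, and the isomorphism property $\iota_{(-)}\eta : \mathrm{HP}^i(A) \xrightarrow{\cong} \mathrm{HP}_{n-i}(A)$ for every $i$.

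By unimodularity, there is a volume form $\eta \in \Omega^n(A)$ such that the chain-level map $\iota_{(-)}\eta : \mathfrak X^\bullet(A) \to \Omega^{n+\bullet}(A)$ is an isomorphism of graded vector spaces, and such that diagram~\eqref{diag:unimodularPoisson} commutes. Commutativity says exactly that $\iota_{(-)}\eta$ is a chain map from $(\mathfrak X^\bullet(A),\delta)$ to $(\Omega^{n+\bullet}(A),\partial)$. Evaluating this chain map at $1 \in \mathfrak X^0(A) = A$, which is $\delta$-closed, forces $\partial \eta = 0$, so $\eta$ descends to a well-defined class in $\mathrm{HP}_n(A)$. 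The normalization $\iota_1 \eta = \eta$ is just the defining identity of contraction with the scalar $1$. For the closedness, observe that $\eta$ is necessarily top-degree: since $\mathfrak X^p(A) = 0$ outside the nonnegative range, the volume-form isomorphism forces $\Omega^{n+1}(A) = 0$, whence $d\eta = 0$ trivially. Finally, since $\iota_{(-)}\eta$ is simultaneously a chain map and a bijection of the underlying graded vector spaces, it descends to an isomorphism on (co)homology, which is the required Poincar\'e duality.

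I do not anticipate any real obstacle: the entire substance has been packaged into Proposition~\ref{DC-on-Poisson} and into the definition of unimodularity itself, so the proof is essentially an assembly of already-established facts. The one subtlety worth keeping in mind is bookkeeping with the paper's grading convention (so that a $p$-vector sits in grading $-p$), which is what makes the vanishing $\Omega^{n+1}(A) = 0$, and hence $d\eta = 0$, follow automatically from the isomorphism property of the volume form, rather than having to be imposed as an extra hypothesis.
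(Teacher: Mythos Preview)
Your proposal is correct. The paper does not actually give a proof of this theorem but simply defers to Xu \cite{Xu}; your sketch spells out the natural argument (using Proposition~\ref{DC-on-Poisson} for the differential calculus and the commutativity of~\eqref{diag:unimodularPoisson} for the chain-level Poincar\'e duality), which is exactly the expected content behind that citation.
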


The proof of this theorem is given in Xu \cite{Xu}, and
in particular,  as a corollary, 
$\mathrm{HP}^\bullet(A)$ is a Batalin-Vilkovisky algebra, where the Batalin-Vilkovisky
operator generates the Schouten-Nijenhuis bracket.
The negative cyclic homology of the mixed complex
$(\Omega^\bullet(A), \partial, d)$, 
denoted by $\mathrm{PC}^{-}_\bullet(A)$ and called
the {\it negative cyclic Poisson homology} of $A$,
has a gravity algebra structure, induced from the Batalin-Vilkovisky structure
on the Poisson cohomology.

\subsubsection{Unimodular Frobenius Poisson algebras}

Suppose $A$ is a Poisson algebra, and let $A^*$ be its linear dual space.
For any $P\in\mathfrak X^{p}(A)$ and $\phi\in\mathfrak X^{q}(A; A^*)$,
let
$
\iota_P(\phi)\in\mathfrak X^{p+q}(A; A^*)
$
be given by
\begin{equation}\label{def:innerproductondualvectors}
(\iota_P\phi)(f_1, \cdots, f_{p+q}):=
\sum_{\sigma\in S_{p,q}}\mathrm{sgn}(\sigma)
P(f_{\sigma(1)},\cdots, f_{\sigma(p)})\cdot
\phi(f_{\sigma(p+1)},\cdots, f_{\sigma(p+q)}).
\end{equation}
Observe that
\begin{eqnarray*}
\mathfrak X^\bullet(A; A^*)&=&\mathrm{Hom}_A(\Omega^\bullet(A), A^*)\\
&=&\mathrm{Hom}_A\big(\Omega^\bullet(A), \mathrm{Hom}_k(A, k)\big)\\
&=&\mathrm{Hom}_k(A\otimes_A\Omega^\bullet(A) , k)\\
&=&\mathrm{Hom}_k(\Omega^\bullet(A), k).
\end{eqnarray*}
By dualizing the de Rham differential $d$ on $\Omega^\bullet(A)$,
we obtain a differential $d^*$ on $\mathrm{Hom}(\Omega^\bullet(A), k)$,
i.e., on $\mathfrak X^\bullet(A; A^*)$, which commutes with the Poisson coboundary
(see \cite[Theorem 4.10]{ZVOZ} for a proof).

\begin{proposition}\label{DC-on-Poisson-dual}
Suppose $A$ is a Poisson algebra.
Then 
$$ (\mathrm{HP}^\bullet(A),\mathrm{HP}^\bullet(A; A^*) )$$
has a differential calculus structure, where
$A^*$ is the dual space of $A$.
\end{proposition}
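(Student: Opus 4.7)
The plan is to obtain the differential calculus on $(\mathrm{HP}^\bullet(A), \mathrm{HP}^\bullet(A; A^*))$ by dualizing the one on $(\mathrm{HP}^\bullet(A), \mathrm{HP}_\bullet(A))$ from Proposition~\ref{DC-on-Poisson}, in direct analogy with the proof of Theorem~\ref{dual-Hoch-DC}. Under the identification $\mathfrak{X}^\bullet(A; A^*) = \mathrm{Hom}_k(\Omega^\bullet(A), k)$ that is displayed just before the statement, the Poisson coboundary $\delta$ on $\mathfrak{X}^\bullet(A; A^*)$ is the linear dual of the Poisson boundary $\partial$ on $\Omega^\bullet(A)$, and the dual de Rham differential $d^*$ is the linear dual of $d$; since $\partial$ and $d$ anticommute, so do $\delta$ and $d^*$, and thus $(\mathfrak{X}^\bullet(A;A^*),\delta,d^*)$ is a mixed complex whose $\delta$-cohomology is $\mathrm{HP}^\bullet(A; A^*)$.

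First I would define a dual cap product $\cap^*$ and a dual Lie derivative $L^*$ by transposition: for $P\in\mathfrak{X}^p(A)$ and $\phi\in\mathfrak{X}^q(A;A^*)=\mathrm{Hom}(\Omega^q(A),k)$, set
\[
(P\cap^*\phi)(\omega):=(-1)^{|P||\phi|}\,\phi(\iota_P\omega),\qquad \omega\in\Omega^{p+q}(A),
\]
and $L^*_P\phi:=(-1)^{|P||\phi|}\phi\circ L_P$. Then the identities $\iota^*_{P\wedge Q}=\iota^*_P\iota^*_Q$ and $[L^*_P,L^*_Q]=L^*_{[P,Q]}$ hold verbatim as transposes of the corresponding identities on $\Omega^\bullet(A)$, giving $\mathrm{HP}^\bullet(A;A^*)$ the structure of both a graded module and a graded Lie module over $\mathrm{HP}^\bullet(A)$. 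The abstract principle at work is the one recorded in Remark~\ref{rem:comparisonofPoisson}: the linear dual of a Gerstenhaber module is canonically a Gerstenhaber module.

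Next I would verify the Cartan-type identity $L^*_P=[d^*,\iota^*_P]$ together with the compatibility $(-1)^{|P|+1}\iota^*_{[P,Q]}=[L^*_P,\iota^*_Q]$ at the chain level. Both are purely formal: they are obtained by transposing $L_P=[d,\iota_P]$ and the corresponding version of \eqref{compatibilityofGerstenmodule} from Proposition~\ref{DC-on-Poisson}. The fact that $d^*$ commutes with $\delta$, proved in \cite[Theorem 4.10]{ZVOZ}, together with $(d^*)^2=0$, ensures that every operation descends to $\delta$-cohomology. Once these identities are in place, reading them on $\mathrm{HP}^\bullet(A;A^*)$ yields precisely the axioms of a differential calculus with the Gerstenhaber algebra $\mathrm{HP}^\bullet(A)$.

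The main obstacle, to the modest extent there is one, is to keep the Koszul signs straight under transposition: each time an operator is pushed across a form a sign $(-1)^{|P||\phi|}$ (or similar) appears, and one must confirm that $d^*$ is indeed of the degree required to make $(\mathfrak{X}^\bullet(A;A^*),\delta,d^*)$ a mixed cochain complex in the sense of the paper's convention. Once the sign bookkeeping is done, no new geometric input is needed beyond the dualized versions of the identities established on $\Omega^\bullet(A)$ in Proposition~\ref{DC-on-Poisson} and the commutation of $d^*$ with $\delta$.
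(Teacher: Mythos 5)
Your proposal is correct and follows essentially the same route as the paper: the paper's proof is simply ``parallel to the proof of Theorem~\ref{dual-Hoch-DC}'', i.e.\ it dualizes the differential calculus of Proposition~\ref{DC-on-Poisson} via the identification $\mathfrak X^\bullet(A;A^*)\cong\mathrm{Hom}_k(\Omega^\bullet(A),k)$, the adjoint (transposed) module action as in Remark~\ref{rem:comparisonofPoisson}, the identity $L^*=[d,\iota]^*=[d^*,\iota^*]$, and the compatibility of $d^*$ with $\delta$ from \cite[Theorem 4.10]{ZVOZ}. Your write-up just makes these transposition steps explicit.
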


\begin{proof}
Parallel to the proof of Theorem
\ref{dual-Hoch-DC}.
\end{proof}

Now, we go to unimodular Frobenius Poisson algebras, a notion
introduced by Zhu, Van Oystaeyen and Zhang in
\cite{ZVOZ}.
Suppose $A^!$ is a finite dimensional graded Poisson algebra.
If there is an $A^!$-module isomorphism
$$
\eta^!:(A^{!})^\bullet\longrightarrow (A^{\ac})_{n+\bullet},
\quad\mbox{for some\;} n\in\mathbb N,
$$
where $A^{\ac}:=(A^{!})^*=\mathrm{Hom}(A^!,k)$, then we
may view $\eta^!$ as an element in $\mathrm{Hom}_{A^!}(A^!, A^{\ac})\subset\mathfrak X^\bullet(A^!; A^{\ac})$, and
have a diagram
\begin{equation}\label{formula:unimodularcyclicPoisson}
\xymatrixcolsep{4pc}
\xymatrix{
\mathfrak X^\bullet(A^!) \ar[r]^-{\iota_{(-)}\eta^!}&\mathfrak X^{n+\bullet}(A^!;
A^{\ac})\\
\mathfrak X^{\bullet-1}(A^!)\ar[r]^-{\iota_{(-)}\eta^!}\ar[u]_{\delta}
&\mathfrak X^{n+\bullet-1}(A^!; A^{\ac}).\ar[u]_{\delta}
}
\end{equation}

\begin{definition}[Unimodular Frobenius Poisson algebra; \cite{ZVOZ}]
Suppose $A^!$ is a finite dimensional (graded) Poisson algebra.
If there is an $\eta^!\in\mathfrak X^\bullet({A^!};A^{\ac})$ (also called the {\it volume form})
such that
the digram
\eqref{formula:unimodularcyclicPoisson}
commutes,
then $A^!$ is called a {\it unimodular Frobenius Poisson algebra} of degree $n$.
\end{definition}

Alternatively described, a volume form is a nonzero element $\eta^!$ in the
top degree of $A^{\ac}$. It gives the unimodular Poisson structure on $A^!$ if
and only if it is a Poisson cycle.
From the definition, one immediately deduces that:

\begin{theorem}\label{thm:differentialcalculusstructureofunimodularFrobeniusPoisson}
Suppose $A$ is a unimodular Frobenius Poisson algebra. Then 
$$
(\mathrm{HP}^\bullet(A), \mathrm{HP}^\bullet(A; A^*))
$$
forms a differential calculus with duality.
\end{theorem}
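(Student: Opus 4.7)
The plan is to build on the differential calculus supplied by Proposition~\ref{DC-on-Poisson-dual} and to exhibit a volume form together with the required isomorphism. Concretely, what remains to verify is that there is an integer $n$ and an element $\eta \in \mathrm{HP}^n(A;A^*)$ satisfying the normalization $\eta \cap 1 = \eta$, the Connes-closedness $d^\ast(\eta) = 0$, and such that $\mathrm{PD}(-) := \iota_{(-)}\eta : \mathrm{HP}^\bullet(A) \to \mathrm{HP}^{\bullet+n}(A; A^*)$ is an isomorphism.

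The natural candidate is $\eta^!$ itself, viewed (as in the paragraph preceding \eqref{formula:unimodularcyclicPoisson}) as an element of $\mathfrak{X}^n(A;A^*)$. Using the definition \eqref{def:innerproductondualvectors} of the inner product and the identifications
$$\mathfrak{X}^\bullet(A;A^*) = \mathrm{Hom}_A(\Omega^\bullet(A), A^*) = \mathrm{Hom}_k(\Omega^\bullet(A), k)$$
exhibited just before Proposition~\ref{DC-on-Poisson-dual}, the hypothesis that $\eta^!$ is an $A$-module isomorphism translates at once into the statement that $\iota_{(-)}\eta^!$ is bijective on each graded piece. The normalization $\eta^! \cap 1 = \eta^!$ follows immediately from \eqref{def:innerproductondualvectors} applied to $P = 1 \in \mathfrak{X}^0(A)$.

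Next, I would verify that $\mathrm{PD}$ intertwines the Poisson coboundaries. This is precisely the content of the commutativity of diagram \eqref{formula:unimodularcyclicPoisson}, which is by definition of the unimodular Frobenius Poisson structure. Passing to Poisson cohomology, $\mathrm{PD}$ therefore descends to the claimed Poincar\'e duality isomorphism.

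The only step where I expect some real work is checking $d^\ast(\eta^!) = 0$. Since $d^\ast$ is the transpose of the de Rham differential on $\Omega^\bullet(A)$, this vanishing is equivalent to the statement that $\eta^!$, regarded as a $k$-linear functional on $\Omega^n(A)$, annihilates every exact $n$-form. Unpacking $\eta^!$ through the $\mathrm{Hom}_A$-identification and applying $A$-linearity together with the Leibniz rule for $d$, this reduces to a direct calculation; it is in essence the computation carried out by Zhu--Van Oystaeyen--Zhang in \cite{ZVOZ} en route to the Batalin-Vilkovisky structure. Combining these verifications yields the differential calculus with duality, and together with Theorem~\ref{BVcriterion} produces the gravity algebra structure on the cyclic Poisson cohomology of $A$.
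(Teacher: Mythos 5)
Your overall strategy coincides with the paper's: the differential calculus is already supplied by Proposition~\ref{DC-on-Poisson-dual}, and the remaining duality axioms are read off from the defining diagram \eqref{formula:unimodularcyclicPoisson} (the paper itself only records this observation and defers the details to \cite{ZVOZ}). The identification of $\iota_{(-)}\eta^!$ with post-composition by the $A$-module isomorphism $\eta^!\colon A\to A^*$, the normalization $\eta^!\cap 1=\eta^!$, and the fact that unimodularity is precisely the statement that $\iota_{(-)}\eta^!$ is a chain map (whence it descends to an isomorphism on Poisson cohomology, and in particular $\delta(\eta^!)=\iota_{\delta(1)}\eta^!=0$) are all handled correctly.

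The one place you go astray is the verification of $d^*(\eta^!)=0$, which you single out as the step requiring ``real work'' and describe as the statement that $\eta^!$, \emph{regarded as a functional on $\Omega^n(A)$}, annihilates exact $n$-forms. But $\eta^!$ does not sit in $\mathfrak X^{n}(A;A^*)\cong\mathrm{Hom}_k(\Omega^{n}(A),k)$: it is an element of $\mathrm{Hom}_A(A,A^*)=\mathfrak X^{0}(A;A^*)\cong\mathrm{Hom}_k(\Omega^{0}(A),k)=A^*$, namely the functional dual to the socle (in the model case $\xi_1^*\cdots\xi_n^*$); the integer $n$ records only the \emph{internal} degree shift $A^\bullet\to (A^*)_{n+\bullet}$, not the polyvector degree. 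Consequently $d^*(\eta^!)=\pm\,\eta^!\circ d$ lands in $\mathrm{Hom}_k(\Omega^{-1}(A),k)=0$ and vanishes for trivial degree reasons --- exactly as $d\eta=0$ is automatic for a top-degree form in the unimodular Poisson case of \S\ref{subsect:unimodularPoisson}. The computation you describe (killing exact $n$-forms) is therefore not the right one, and the appeal to \cite{ZVOZ} for it is misplaced. This slip does not invalidate the proof, since the conclusion of that step is true for simpler reasons than you anticipate, but the degree bookkeeping should be corrected.
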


The proof of this theorem is given in \cite{ZVOZ}, although the authors
did not express the statement in the above form.
As a corollary, $\mathrm{HP}^\bullet(A)$ is a Batalin-Vilkovisky algebra, where the Batalin-Vilkovisky
operator again generates
the Schouten-Nijenhuis bracket.
The cyclic cohomology of the mixed cochain complex
$(\mathfrak X^\bullet(A; A^*), \delta, d^*)$,
denoted by $\mathrm{PC}^\bullet(A)$ and called the {\it cyclic Poisson
cohomology} of $A$,
therefore has a gravity algebra structure again induced from the Batalin-Vilkovisky structure
on the Poisson cohomology.

\section{Koszul duality}\label{Sect_Koszul}

The purpose of this section is to relate the gravity algebras obtained
in previous sections by means of Koszul duality.
This completes relationships (a) and (b) listed in \S\ref{Sect_intro}.

\subsection{Quadratic and Koszul algebras}
Let $V$ be a finite-dimensional (possibly graded) vector space over $k$.
Denote by $TV$ the free algebra generated by $V$ over $k$.
Suppose $R$ is a subspace of $V\otimes V$, and let
$(R)$ be the two-sided ideal generated by $R$ in $TV$,
then the quotient algebra
$A:= TV/(R)$
is called
a {\it quadratic algebra}.
There are two concepts associated to a quadratic algebra, namely,
its {\it Koszul dual coalgebra} and {\it Koszul dual algebra}, which are given
as follows:

(1) Consider the subspace
$$U=\bigoplus_{n=0}^\infty U_n:=
\bigoplus_{n=0}^\infty \bigcap_{i+j+2=n}V^{\otimes i}\otimes R\otimes
V^{\otimes j}$$
of $TV$,
then $U$ is not an algebra, but a coalgebra, whose coproduct is induced from
the de-concatenation of the tensor products.
The {\it Koszul dual coalgebra} of $A$, denoted
by $A^{\ac}$, is
$$
A^{\ac}=\bigoplus_{n=0}^\infty \Sigma^{\otimes n} (U_n),
$$
where $\Sigma$ is the degree shifting-up (suspension) functor.
$A^{\ac}$ naturally has a graded coalgebra structure induced from that of $U$;
for example, if all elements of $V$ have degree zero,
then
$$
(A^{\ac})_0=k, \quad (A^{\ac})_1=V, \quad (A^{\ac})_2=R,\quad\cdots\cdots
$$

(2) The {\it Koszul dual algebra} of $A$, denoted by $A^!$,
is just the linear dual space of $A^{\ac}$, which is then a graded algebra.
More precisely,
Let $V^*=\mathrm{Hom}(V, k)$ be the linear dual space of $V$,
and let $R^\perp$ denote
the space of annihilators of $R$ in $V^*\otimes V^*$.
Shift the grading of $V^*$ down by one, denoted by $\Sigma^{-1}V^*$,
then
$$
A^!=T(\Sigma^{-1}V^*)/((\Sigma^{-1}\otimes\Sigma^{-1})\circ R^{\perp}).
$$

Choose a set of basis $\{e_i\}$ for $V$, and let $\{e_i^*\}$ be their duals in $V^*$.
There is a natural chain complex associated to $A$, called the {\it Koszul complex}:
\begin{equation}\label{Koszul_complex}
\xymatrix{
\cdots\ar[r]^-{\delta}&
A\otimes A^{\ac}_{i+1}\ar[r]^-{\delta}&
A\otimes A^{\ac}_{i}\ar[r]^-{\delta}&
\cdots\ar[r]&
A\otimes A^{\ac}_0\ar[r]^-{\delta}& k,
}
\end{equation}
where for any $r\otimes f\in A\otimes A^{\ac}$,
$\delta(r\otimes f)=\displaystyle\sum_i e_ir\otimes\Sigma^{-1}e_i^*f$.

\begin{definition}[Koszul algebra]
A quadratic algebra $A=TV/(R)$ is called {\it Koszul}
if the Koszul chain complex \eqref{Koszul_complex} is acyclic.
\end{definition}

\subsection{Koszul duality for Calabi-Yau algebras}

For Koszul Calabi-Yau algebras, we have the following result due to Van den Bergh
\cite{VdB}:
Suppose $A$ is a Koszul algebra, and denote by $A^!$ its Koszul dual algebra;
then $A$ is $d$-Calabi-Yau if and only if $A^!$ is cyclic of degree $d$.
This can be seen as follows:
Since $A$ is Koszul, the following complex
$$
\cdots\longrightarrow A\otimes A^{\ac}_m\otimes A\stackrel{b}{\longrightarrow} A\otimes A^{\ac}_{m-1}\otimes A
\stackrel{b}\longrightarrow\cdots\stackrel{b}\longrightarrow
A\otimes A^{\ac}_0\otimes A$$
with 
\begin{equation}\label{twosidedboundaryonthehochschildcpx}
b(a\otimes c\otimes a')=
\sum_i \big(e_ia\otimes  e_i^* c\otimes a'+(-1)^{m}a\otimes c e_i^* \otimes  a' e_i\big),
\end{equation}
gives a free, minimal resolution of $A$ as a (left) $A^e$ module.
Now suppose $A$ is Calabi-Yau, we have
\begin{eqnarray*}
\mathrm{RHom}_{A^e}(A, A\otimes A)
&=&\mathrm{Hom}_{A^e}(A\otimes A^{\ac}_\bullet\otimes A,
A\otimes A)\\
&=&A\otimes A^!_\bullet \otimes A
\end{eqnarray*}
in $D(A^e)$, where
the differential of $A\otimes A^!_\bullet\otimes A$
is similar to \eqref{twosidedboundaryonthehochschildcpx}. 
It is isomorphic to $\Sigma^{-n}A$ in $D(A^e)$ means
$$A\otimes A^!_\bullet\otimes A\cong A\otimes\Sigma^{-n} A^{\ac}_\bullet\otimes A,$$
which then implies $A^!\cong\Sigma^{-n} A^{\ac}$ as $A^!$ bimodules by
the uniqueness of minimal resolutions.
That is, $A^!$ is a symmetric Frobenius algebra of dimension $n$.

\begin{proposition}
\label{thm:isoofdifferentialcalculuswithdualityforKoszulCYalgebras}
For a Koszul Calabi-Yau algebra $A$, denote
$A^!$ to be its Koszul dual algebra. Then:
\begin{enumerate}
\item[$(1)$] there exists a quasi-isomorphism of
DG algebras 
$\bar{\mathrm C}^\bullet(A)\simeq \bar{\mathrm
C}^\bullet(A^!)$, which induces, on the Hochschild cohomology level,
an isomorphism of graded commutative algebras;

\item[$(2)$] $(\bar{\mathrm C}_\bullet(A), b, B)\simeq (\bar{\mathrm
C}^\bullet(A^!; A^\ac), \delta, B^*)$ as mixed complexes, which, on the Hochschild homology level, 
maps the volume class
to the volume class;

\item[$(3)$] the DG algebra action (the inner product) of 
$\bar{\mathrm C}^\bullet(A)$ on 
$\bar{\mathrm C}_\bullet(A)$ and that of
$\bar{\mathrm C}^\bullet(A^!)$ on 
$\bar{\mathrm C}^\bullet(A^!; A^{\ac})$
are compatible on the homology level.
\end{enumerate}
\end{proposition}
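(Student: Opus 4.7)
The plan is to exploit the Koszul resolution $K_\bullet(A) := A\otimes A^{\ac}\otimes A$ with differential \eqref{twosidedboundaryonthehochschildcpx}, which since $A$ is Koszul is a small projective $A^e$-resolution of $A$, quasi-isomorphic to the normalized bar resolution $A\otimes\bar A^{\otimes\bullet}\otimes A$. All three parts will then be obtained by applying the standard functors that define Hochschild (co)chains to $K_\bullet(A)$ and identifying the resulting small complexes with their counterparts over $A^!$.

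For part (1), I would apply $\mathrm{Hom}_{A^e}(-,A)$ to $K_\bullet(A)$, yielding the small cochain complex $\mathrm{Hom}(A^{\ac},A)\cong A\otimes A^!$ (using that $A^!$ is the graded dual of $A^{\ac}$). This small complex admits a natural DG algebra embedding into $\bar{\mathrm C}^\bullet(A^!)$ whose image is concentrated on cochains supported on the quadratic generators, and that embedding is a quasi-isomorphism because under it the Yoneda/cup product on $A\otimes A^!$ matches the Gerstenhaber cup product on $\bar{\mathrm C}^\bullet(A^!)$ term by term. Composing with the inverse of the quasi-isomorphism $A\otimes A^!\simeq \bar{\mathrm C}^\bullet(A)$ on the $A$-side yields the desired DG algebra quasi-isomorphism and, on cohomology, an isomorphism of graded commutative algebras. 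For part (2), I would apply $-\otimes_{A^e}A$ to $K_\bullet(A)$, obtaining the small chain complex $A\otimes A^{\ac}$. The linear dual identification $(A^!_n)^*\cong A_n$ gives
\[
A\otimes A^{\ac}\;\cong\;\mathrm{Hom}(A^!,A^{\ac})\subset \bar{\mathrm C}^\bullet(A^!;A^{\ac}),
\]
and a direct computation shows that under this identification the Hochschild boundary $b$ matches the Poisson-style coboundary $\delta$, while the Connes operator $B$ on the Koszul subcomplex (coming from cyclic permutation of tensor factors of $A^{\ac}$) matches the dual Connes operator $B^*$, so the mixed-complex structures agree. The volume class $\Omega\in\mathrm{HH}_n(A)$ of Theorem \ref{thm:differentialcalculuswithdualityforCY} is represented by a top-degree element of $A\otimes A^{\ac}_n$ whose image under the identification above is the Frobenius form implementing the symmetric pairing on $A^!$, which is precisely the volume class entering Theorem \ref{thm:differentialcalculuswithdualityforFrob}.

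Part (3) then falls out: the cap product on $\bar{\mathrm C}_\bullet(A)$ restricts, via the Koszul subcomplex, to a pairing between $A\otimes A^!$ and $A\otimes A^{\ac}$ given by contracting the $A^!$ factor with the leading tensor slots of $A^{\ac}$, and this is exactly the definition of the adjoint cap product $\cap^*$ on $\bar{\mathrm C}^\bullet(A^!;A^{\ac})$. Passing to (co)homology gives the stated compatibility of the DG algebra actions. Parts of (1) and (2) are already carried out in \cite[Theorem 37]{CYZ}, on which I would rely to avoid re-doing the core computation; the remaining new work is the explicit identification of volume classes and the cap-product compatibility in (3).

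The main obstacle will be bookkeeping: the sign conventions, the suspension shift $\Sigma$ in the definition of $A^{\ac}$, and the cyclic-symmetry conventions used to define $B$ and $B^*$ all have to line up so that the chain-level identifications are strict rather than merely up to explicit homotopy. A secondary subtlety is that the differential calculus structure of Theorem \ref{DGT} on the bar complex is known to hold only up to coherent homotopy at the chain level, so some care is needed to verify that the Koszul subcomplex is preserved by enough of these structures to make the direct comparison with the small model on the $A^!$-side rigorous; at the level of cohomology, however, these homotopies can be absorbed into the identifications, and the three assertions follow as stated.
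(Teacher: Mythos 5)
Your overall strategy---reducing everything to the small Koszul models $A\otimes A^!$ and $A\otimes A^{\ac}$ obtained from the resolution $A\otimes A^{\ac}\otimes A$---is essentially the route the paper takes, and parts (1) and (3) go through in substance as you describe (the paper builds the cochain comparison through the bar and cobar constructions, $\bar{\mathrm C}^\bullet(A)\simeq\mathrm{Hom}(A^{\ac},A)\cong A\otimes A^!$ and $\bar{\mathrm C}^\bullet(A^!)\simeq A^!\otimes\Omega(A^{\ac})\cong A^!\otimes A$, rather than via your claimed embedding of $A\otimes A^!$ into $\bar{\mathrm C}^\bullet(A^!)$, but the content is the same).

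The genuine gap is in part (2). You assert that the Connes operator $B$ restricts to the Koszul subcomplex $A\otimes A^{\ac}\subset\bar{\mathrm C}_\bullet(A)$ (``coming from cyclic permutation of tensor factors of $A^{\ac}$'') and there coincides with $B^*$, so that the mixed-complex structures agree on the small model. This fails: while $A\otimes A^{\ac}$ is indeed a $b$-subcomplex (the interior multiplications vanish on $R$), for a chain $(a_0,\bar a_1,\dots,\bar a_m)$ with $(\bar a_1,\dots,\bar a_m)\in A^{\ac}_m\subset V^{\otimes m}$ but $a_0\in A$ arbitrary, the cyclic rotations in $B$ move $\bar a_0$ into a tensor slot, and since $a_0$ need not lie in $V$ the result leaves $k\otimes A^{\ac}_{m+1}$. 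The paper states explicitly that $A\otimes A^{\ac}$ carries \emph{no} mixed complex structure; the quasi-isomorphism of mixed complexes in (2) is instead produced by a zig-zag through $\bar{\mathrm C}_\bullet(\Omega(A^{\ac}))$, the Hochschild chain complex of the cobar construction, which admits mixed-complex quasi-isomorphisms to both $\bar{\mathrm C}_\bullet(A)$ and $\bar{\mathrm C}^\bullet(A^!;A^{\ac})$ (\cite[Lemma 16]{CYZ}), while the small model $A\otimes A^{\ac}$ serves only to compute the $b$-homology and to exhibit the volume class as a nonzero element of $A^{\ac}_n$. Your closing caveat about chain-level homotopies addresses the differential-calculus operations, not this point; to repair (2) you must either insert such an intermediate genuine mixed complex or supply an explicit homotopy showing that $B$ preserves the Koszul model up to terms that can be matched with $B^*$ on homology.
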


Since the Gerstenhaber bracket on the Hochschild cohomology
is generated by the Batalin-Vilkovisky operator,
this proposition implies that
the two pairs
$$
(\mathrm{HH}^\bullet(A), \mathrm{HH}_\bullet(A))
\quad\mbox{and}\quad
(\mathrm{HH}^\bullet(A^!), \mathrm{HH}^\bullet(A^!; A^{\ac}))$$
are isomorphic as differential calculus with duality.
Thus as a corollary, the first author together with Yang and Zhou proved
in \cite{CYZ}
that for Koszul Calabi-Yau algebras, $\mathrm{HH}^\bullet(A)\cong\mathrm{HH}^\bullet(A^{\ac})$
as Batalin-Vilkovisky algebras.
For reader's convenience
we sketch the proof of the above proposition
in the following while leaving the details for the interested readers to refer to \cite{CYZ}.

\begin{proof}[Proof of Proposition
\ref{thm:isoofdifferentialcalculuswithdualityforKoszulCYalgebras}]
(1)
First, since $A$ is Koszul, its bar construction $\mathrm{B}(A)\simeq A^{\ac}$ as DG coalgebras.
With this quasi-isomorphism we obtain a quasi-isomorphism of DG algebras
\begin{equation}\label{qisHochschildcochaicomplex}
\bar{\mathrm C}^\bullet(A)\cong \mathrm{Hom}(\mathrm B(A), A)
\simeq\mathrm{Hom}(A^{\ac}, A)\cong A\otimes A^!,
\end{equation}
where the differential on $A\otimes A^!$ is given by
\begin{equation*}
\delta(a\otimes x)=\sum_i \Big(
e_i a \otimes e_i^* x  +(-1)^{|x|}  a  e_i  \otimes xe_i^*\Big).
\end{equation*}
Second, with the same argument and with the differentials appropriately assigned, 
we obtain a quasi-isomorphism of
DG algebras
\begin{equation}\label{qisHochschildcochaicomplexofKoszuldual}
\bar{\mathrm C}^\bullet(A^!)\cong \mathrm{Hom}(\mathrm B(A^!), A^!)
\simeq A^!\otimes\Omega(A^{\ac})\cong A^!\otimes A,
\end{equation}
where $\Omega(A^{\ac})$ is the cobar construction of $A^{\ac}$,
which is quasi-isomorphic to $A$ as DG algebras.
Now the right-most two DG algebras
in \eqref{qisHochschildcochaicomplex}
and \eqref{qisHochschildcochaicomplexofKoszuldual}
are quasi-isomorphic as DG algebras via $a\otimes x\mapsto
x\otimes a$, from which follows
the desired quasi-isomorphism 
$\bar{\mathrm C}^\bullet(A)\simeq \bar{\mathrm
C}^\bullet(A^!)$.

(2)
Equip $A\otimes A^{\ac}$  
with differential
\begin{equation*}
b(a\otimes c)=\sum_{i}
\Big(a e_i \otimes c\cdot e_i^*  +(-1)^{m}  e_i a\otimes   e_i^* \cdot c\Big).
\end{equation*}
It is proved in \cite[Lemma 16]{CYZ}
that we have a commutative diagram
\begin{equation}\label{qishochschildchaincpx}
\xymatrix{
&\bar{\mathrm{C}}_\bullet(\Omega(A^\ac))\ar[ld]_{p_1}\ar@<.2ex>[rd]^{p_2}&\\
\bar{\mathrm{C}}_\bullet(A)&&\bar{\mathrm{C}}^\bullet(A^!; A^\ac),
\ar@<.3ex>[lu]^{q_2}\ar[ld]^{\phi_2}\\
&A\otimes A^{\ac}\ar[lu]^{\phi_1}&
}
\end{equation}
where $p_1$ and $p_2$ are quasi-isomorphisms of mixed complexes,
all other maps are quasi-isomorphisms of $b$-complexes,
and $p_2$ and $q_2$ are homotopy inverse to each other.
This means that even though $A\otimes A^{\ac}$ has no mixed
complex structure, on the homology level, it gives an isomorphism
$$
\mathrm{HH}_\bullet(A)\cong\mathrm H_\bullet(A\otimes A^{\ac}, b)
\cong\mathrm{HH}^\bullet(A^!; A^{\ac})
$$
which identifies the $B$ operator on the left-most term with the $B^*$
operator on the right-most term.

It is also proved in \cite{CYZ} that
the volume class in $\mathrm{HH}_\bullet(A)$
and $\mathrm{HH}^\bullet(A^!; A^{\ac})$, via the above isomorphism,
is represented by a nonzero element $\omega\in A^{\ac}_n
\cong k\otimes A^{\ac}_n\subset A\otimes A^{\ac}$.

(3)
Since $A^{\ac}$ is a graded coalgebra and hence admits an action of $A^!$, we 
have that $A\otimes A^!$ acts on $A\otimes A^{\ac}$. Denote this action by $\circ$, then
we have a commutative diagram
\begin{equation}\label{modulecompatibilityofKoszulcpx}
\xymatrixcolsep{4pc}
\xymatrix
{
\bar{\mathrm C}^\bullet(A)\ar[d]^{\simeq}\ar@{~>}[r]^-{\cap}&\bar{\mathrm C}_\bullet(A)\\
A\otimes A^!\ar@{~>}[r]^-{\circ}&A\otimes A^{\ac}\ar[u]_{\simeq},
}
\end{equation}
where the curved arrows mean the module actions.
By exactly the same argument, we have that the following diagram
\begin{equation}\label{modulecompatibilityofKoszuldualcpx}
\xymatrixcolsep{4pc}
\xymatrix
{
\bar{\mathrm C}^\bullet(A^!)\ar[d]^{\simeq}\ar@{~>}[r]^-{\cap^*}&\bar{\mathrm C}^\bullet(A^!, A^{\ac})
\ar[d]^{\simeq}\\
A\otimes A^!\ar@{~>}[r]^-{\circ}&A\otimes A^{\ac},
}
\end{equation}
is commutative.
Combining the above two diagrams \eqref{modulecompatibilityofKoszulcpx}
and \eqref{modulecompatibilityofKoszuldualcpx} with
\eqref{qisHochschildcochaicomplex}, \eqref{qisHochschildcochaicomplexofKoszuldual} and \eqref{qishochschildchaincpx},
we obtain (3).
\end{proof}

\begin{example}[Polynomials]\label{Ex:polynomial}
Let $A=k[x_1, x_2,\cdots, x_n]$ be the space of polynomials (the symmetric tensor algebra),
with each $x_i$ having degree zero.
It is well-known ({\it cf.} \cite{LV}) that
$A$ is a Koszul algebra, and its Koszul dual algebra $A^!$ is
the graded symmetric algebra $\mathbf \Lambda(\xi_1,\xi_2,\cdots,\xi_n)$, with grading
$|\xi_i|=-1$.
There is a non-degenerate symmetric pairing on $A^!$ given by
$$(\alpha, \beta)\mapsto \alpha\wedge\beta/\xi_1\cdots\xi_n$$
so that $A^!$ is symmetric Frobenius, and therefore $A$ is
Calabi-Yau. Equivalently, the pairing on $A^!$ that gives $A^\bullet\cong
\Sigma^{-n}A^{\ac}$ is the same as capping with the following form
$$A^\bullet\to
\Sigma^{-n}A^{\ac}: \quad x\mapsto x\cap\xi_1^*\cdots\xi_n^*.$$
By pulling $\xi_1^*\cdots\xi_n^*$ via the quasiisomorphism
$A\otimes A^{\ac}\simeq \bar{\mathrm{C}}_\bullet(A)$, we get the
volume form on $A$. Its homology class is exactly $dx_1\cdots dx_n$ under
the Hochschild-Kostant-Rosenberg map.
\end{example}

\begin{corollary}\label{cor:isoofLieinftyforKCY}
Suppose $A$ is a $d$-Calabi-Yau algebra and $A^!$ is its Koszul dual.
Then
$$
\mathrm{HC}_\bullet^{-}(A)\cong\mathrm{HC}^\bullet(A^!)
$$
as gravity algebras.
\end{corollary}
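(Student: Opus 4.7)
The plan is to deduce the corollary by combining Proposition~\ref{thm:isoofdifferentialcalculuswithdualityforKoszulCYalgebras} (the Koszul-duality comparison of differential calculi with duality) with Theorem~\ref{mainthm}(2) (the functoriality of the gravity algebra construction under quasi-isomorphisms of mixed complexes that preserve the BV structure on Hochschild (co)homology). To begin, by Theorem~\ref{BVcriterion} applied to the two differential calculi with duality of Section~4 (namely Theorems \ref{thm:differentialcalculuswithdualityforCY} and \ref{thm:differentialcalculuswithdualityforFrob}), both $\mathrm{HC}_\bullet^{-}(A)$ and $\mathrm{HC}^\bullet(A^!)$ carry gravity algebra structures: the first comes from the mixed complex $(\bar{\mathrm C}_\bullet(A), b, B)$ whose Hochschild homology is a BV algebra via the volume form of the Calabi-Yau structure, and the second comes from the mixed complex $(\bar{\mathrm C}^\bullet(A^!;A^{\ac}), \delta, B^*)$ whose Hochschild cohomology is a BV algebra via the Frobenius pairing.

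Next, I would invoke Proposition~\ref{thm:isoofdifferentialcalculuswithdualityforKoszulCYalgebras} to identify the two mixed complexes and the two BV structures. Part~(2) supplies a quasi-isomorphism of mixed complexes
\[
(\bar{\mathrm C}_\bullet(A), b, B) \;\simeq\; (\bar{\mathrm C}^\bullet(A^!;A^{\ac}), \delta, B^*),
\]
which identifies the volume classes. Part~(1) supplies a quasi-isomorphism of DG algebras $\bar{\mathrm C}^\bullet(A) \simeq \bar{\mathrm C}^\bullet(A^!)$, giving on cohomology an isomorphism of graded commutative algebras. Part~(3) asserts that the cap-product actions of $\bar{\mathrm C}^\bullet(A)$ on $\bar{\mathrm C}_\bullet(A)$ and of $\bar{\mathrm C}^\bullet(A^!)$ on $\bar{\mathrm C}^\bullet(A^!;A^{\ac})$ agree on homology under these quasi-isomorphisms. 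Combining these three compatibilities, the Poincar\'e duality isomorphisms of the two differential calculi are intertwined, hence the BV algebra structures pushed forward to $\mathrm{HH}_\bullet(A)$ and $\mathrm{HH}^\bullet(A^!;A^{\ac})$ (in both cases: graded commutative product transferred along $\mathrm{PD}$, generator $B$ respectively $B^*$) coincide.

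Once this BV-level compatibility is in hand, the hypotheses of Theorem~\ref{mainthm}(2) are satisfied for the quasi-isomorphism of mixed complexes above, and we conclude that the induced gravity algebra structures on the corresponding negative cyclic homologies are isomorphic. Under the grading convention of the remark following Definition~\ref{def_HC}, the negative cyclic homology of the mixed cochain complex $(\bar{\mathrm C}^\bullet(A^!;A^{\ac}), \delta, B^*)$ is exactly the cyclic cohomology $\mathrm{HC}^\bullet(A^!)$, whence the desired isomorphism.

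The main obstacle is essentially contained in Proposition~\ref{thm:isoofdifferentialcalculuswithdualityforKoszulCYalgebras} itself, whose proof is recalled (and referenced to \cite{CYZ}) earlier in this section; once that bookkeeping of Koszul-dual Hochschild resolutions, cap products, and volume classes has been carried out, the corollary is formal. The only point still to be verified explicitly is that the BV structure on $\mathrm{HH}_\bullet(A)$ used in Theorem~\ref{mainthm} (product pushed forward from $\mathrm{HH}^\bullet(A)$ along $\mathrm{PD}$, with $B$ as generator) is precisely the one identified under the quasi-isomorphism of (2) with the corresponding BV structure on $\mathrm{HH}^\bullet(A^!;A^{\ac})$ — and this is immediate from the commutativity of the diagrams in parts (1)--(3) of the proposition.
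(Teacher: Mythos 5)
Your proposal is correct and follows essentially the same route as the paper: it combines Proposition \ref{thm:isoofdifferentialcalculuswithdualityforKoszulCYalgebras} with Theorems \ref{thm:differentialcalculuswithdualityforCY} and \ref{thm:differentialcalculuswithdualityforFrob} to obtain an isomorphism of differential calculi with duality, hence of Batalin--Vilkovisky algebras on $\mathrm{HH}_\bullet(A)\cong\mathrm{HH}^\bullet(A^!;A^{\ac})$, and then concludes via Theorem \ref{mainthm}(2). Your added remark on the grading convention identifying the negative cyclic homology of the mixed cochain complex with $\mathrm{HC}^\bullet(A^!)$ is a detail the paper leaves implicit but is consistent with it.
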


\begin{proof}
Proposition \ref{thm:isoofdifferentialcalculuswithdualityforKoszulCYalgebras}
implies that
$$(\mathrm{CH}_\bullet(A), b, B)\simeq
(\mathrm{CH}^\bullet(A^{!}; A^{\ac}), \delta, B^*)
$$ 
as mixed complexes. Combine it with the quasi-isomorphism
of the Hochschild cochain complexes, we obtain that 
$$
(\mathrm{HH}^\bullet(A),\mathrm{HH}_\bullet(A))
\quad\mbox{and}\quad
(\mathrm{HH}^\bullet(A^!),\mathrm{HH}^\bullet(A^!; A^{\ac}))
$$
are isomorphic as differential calculus with duality, which,
by
Theorems \ref{thm:differentialcalculuswithdualityforCY}
and
\ref{thm:differentialcalculuswithdualityforFrob},
induces an isomorphism
$$
\mathrm{HH}_\bullet(A)\cong\mathrm{HH}^\bullet(A^!; A^{\ac})
$$
as Batalin-Vilkovisky algebras.
The statement now follows from Theorem \ref{mainthm} (2).
\end{proof}

\subsection{Koszul duality for quadratic Poisson polynomial algebras}

For $A=k[x_1,x_2,\cdots, x_n]$ with a quadratic bivector
\begin{equation}\label{formula:quadratic_Poisson}
\pi=\sum_{i_1,i_2,j_1,j_2}c_{i_1i_2}^{j_1j_2}x_{i_1}x_{i_2}
\frac{\partial}{\partial x_{j_1}}\wedge\frac{\partial}{\partial x_{j_2}},\quad
c_{i_1i_2}^{j_1j_2}\in k,
\end{equation}
there is a bivector $\pi^!$
on $A^!=\mathbf \Lambda(\xi_1,\xi_2,\cdots,\xi_n)$, 
which we would call the Koszul dual of $\pi$ and is given by
\begin{equation}\label{corresp:PP}
\pi^{!}:=\sum_{i_1,i_2,j_1,j_2}c_{i_1i_2}^{j_1j_2}\xi_{j_1}\xi_{j_2}\frac{\partial}{\partial \xi_{i_1}}\wedge\frac{\partial}{\partial \xi_{i_2}}.
\end{equation}
Shoikhet showed in \cite{Shoikhet} that
for $A=k[x_1,\cdots,x_n]$ with a bivector $\pi$ in the form \eqref{formula:quadratic_Poisson}.
Then
$(A, \pi)$ is Poisson if and only if
$
(A^!,\pi^!)
$
is Poisson.
We have the following result, obtained in \cite{CCEY}:

\begin{proposition}\label{thm:iso_BatalinVilkovisky}
Suppose $A=k[x_1,\cdots,x_n]$ is a quadratic Poisson algebra, and let $A^!$ be its Koszul dual.
Then $A$ is unimodular if and only if $A^{!}$ is unimodular Frobenius,
and in this case we have ($``\cong"$ means {\it isomorphism}):
\begin{enumerate}
\item[$(1)$] $\mathrm{CP}^\bullet(A)\cong\mathrm{CP}^\bullet(A^!)$
as DG algebras;

\item[$(2)$] $\mathrm{CP}_\bullet(A)\cong\mathrm{CP}^\bullet(A^!; A^{\ac})$
as mixed complexes, and moreover, under the isomorphism, the volume
form of the former is mapped to the volume form of the latter;

\item[$(3)$] The DG algebra action (the inner product) of $\mathrm{CP}^\bullet(A)$
on $\mathrm{CP}_\bullet(A)$ and that of $\mathrm{CP}^\bullet(A^!)$
on $\mathrm{CP}^\bullet(A^!; A^{\ac})$ are compatible under the above two isomorphisms.
\end{enumerate}
\end{proposition}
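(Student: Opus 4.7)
The plan is to make all three isomorphisms simultaneously explicit, exploiting how small the (co)chain complexes become for polynomial algebras. As a first step, note that $\mathrm{CP}^\bullet(A) = k[x_1,\ldots,x_n]\otimes\Lambda(\partial_{x_1},\ldots,\partial_{x_n})$, while a direct computation of graded-skew multi-derivations of $A^! = \Lambda(\xi_1,\ldots,\xi_n)$ gives $\mathrm{CP}^\bullet(A^!) = \Lambda(\xi_1,\ldots,\xi_n)\otimes k[\partial_{\xi_1},\ldots,\partial_{\xi_n}]$. Sending $x_i \leftrightarrow \partial_{\xi_i}$ and $\partial_{x_i} \leftrightarrow \xi_i$, with the appropriate degree shift and Koszul-sign conventions, yields an isomorphism of bigraded commutative algebras; this settles the algebra part of (1).

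For the differentials in (1), I would verify by direct inspection that the bivector $\pi$ of \eqref{formula:quadratic_Poisson} is sent to $\pi^!$ of \eqref{corresp:PP} under the above identification (the coefficients $c^{j_1j_2}_{i_1i_2}$ occur symmetrically in the two formulas). Since on both sides the Poisson coboundary is realized as the Schouten bracket with the Poisson bivector, and the Schouten brackets match on generators, the two differentials automatically agree.

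For part (2), I would use the chain of equalities immediately preceding Proposition~\ref{DC-on-Poisson-dual} to identify $\mathrm{CP}^\bullet(A^!;A^{\ac})$ with $\mathrm{Hom}_k(\Omega^\bullet(A^!),k)$. A basis of $\Omega^\bullet(A^!)$ is $\{\xi^I\,d\xi^J\}$, and I would send $(\xi^I\,d\xi^J)^\vee$ to $\pm\, x^{I^c}\,dx^{J^c}\in\mathrm{CP}_\bullet(A)=\Omega^\bullet(A)$, where $I^c$ and $J^c$ denote set-theoretic complements in $\{1,\ldots,n\}$ and the sign is fixed by the Koszul convention. Under this map the class $dx_1\wedge\cdots\wedge dx_n$ from Example~\ref{Ex:polynomial} corresponds to the dual of $\xi_1\cdots\xi_n$, which is exactly the Frobenius volume form. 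Intertwining of the de Rham differentials $d$ and $d^*$ is by construction, while intertwining of the Poisson (co)boundaries reduces once more to the matching $\pi\leftrightarrow\pi^!$ verified in (1), together with a direct manipulation of the Koszul formulas defining $\partial$ and $\delta$.

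For part (3), both inner products are contractions, defined by \eqref{def:innerproductofvectorfields} and \eqref{def:innerproductondualvectors}. Once (1) and (2) are in place, the compatibility reduces to the assertion that contraction by $\partial_{x_i}$ on $\Omega^\bullet(A)$ corresponds to contraction by $\xi_i$ on $\mathrm{CP}^\bullet(A^!;A^{\ac})$, which is immediate on generators. Finally, the equivalence of unimodularity is a direct consequence: under the combined isomorphism of (2) and (3), the square \eqref{diag:unimodularPoisson} for $A$ is identified with the square \eqref{formula:unimodularcyclicPoisson} for $A^!$, and the two volume forms are matched, so one diagram commutes if and only if the other does. The main obstacle will be sign bookkeeping: the Schouten bracket on $\mathrm{CP}^\bullet(A^!)$ carries graded signs coming from the degree $-1$ generators $\xi_i$ that must be reconciled with those in $\mathrm{CP}^\bullet(A)$ under the identification. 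Fixing a uniform Koszul convention on the generators and extending by the universal property of the Schouten algebra should suffice, but the verification demands care.
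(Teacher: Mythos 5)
Your overall strategy is exactly the paper's: compute $\mathfrak X^\bullet(A)$, $\mathfrak X^\bullet(A^!)$, $\Omega^\bullet(A)$ and $\mathfrak X^\bullet(A^!;A^{\ac})$ explicitly as graded (symmetric) algebras on generators, identify them via $x_i\leftrightarrow \partial/\partial\xi_i$ and $\partial/\partial x_i\leftrightarrow\xi_i$ (resp.\ $dx_i\leftrightarrow\xi_i^*$), check that $\pi$ goes to $\pi^!$ so that the Lichnerowicz differentials $[\pi,-]$ and $[\pi^!,-]$ match, verify compatibility of the contractions on generators, and read off the unimodularity equivalence from the matching of volume forms. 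All of that is sound and is what the paper does.

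There is, however, one step that fails as written: the explicit dual-basis formula in part (2), $(\xi^I\,d\xi^J)^\vee\mapsto\pm\,x^{I^c}\,dx^{J^c}$. Since $|\xi_i|=-1$ and the de Rham operator raises degree by one, the generators $d\xi_i$ have degree $0$ and are therefore \emph{even}: $\Omega^\bullet(A^!)=\mathbf\Lambda(\xi_1,\dots,\xi_n,d\xi_1,\dots,d\xi_n)$ is the tensor product of an exterior algebra on the $\xi_i$ with a \emph{polynomial} algebra on the $d\xi_i$, so $J$ ranges over multisets and "$J^c$'' is meaningless; moreover the target of your formula only produces square-free monomials $x^{I^c}$, so the map is not surjective onto $\Omega^\bullet(A)=k[x]\otimes\Lambda(dx)$ (e.g.\ $x_1^2$ has no preimage). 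The correct dual-basis map matches indices rather than complementing them: $(\xi^I\,d\xi^J)^\vee\mapsto\pm\,x^{J}\,dx^{I}$, i.e.\ the multiplicative extension of $x_i\mapsto(d\xi_i)^\vee=\partial/\partial\xi_i$ and $dx_i\mapsto\xi_i^*$ --- precisely the identification \eqref{identificationofPoissonchainandcochaincpxes} used in the paper, and consistent with what you yourself use in part (1). Your conclusion about the volume form happens to survive ($dx_1\cdots dx_n\mapsto\xi_1^*\cdots\xi_n^*=(\xi_1\cdots\xi_n)^\vee$ under both formulas), and the rest of your argument for (2), (3) and the unimodularity equivalence goes through verbatim once the map is corrected.
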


\begin{proof}
(1) 
Since $A=k[x_1,\cdots,x_n]$, we have an explicit expression
for $\Omega^\bullet(A)$, which is
\begin{equation}\label{formula:Poissonchaincpx}
\Omega^\bullet(A)=\mathbf\Lambda(x_1,\cdots, x_n, dx_1,\cdots, dx_n),
\end{equation}
where $\mathbf\Lambda$ means the graded symmetric tensor product, and
$|x_i|=0$ and $|dx_i|=1$, for $i=1,\cdots, n$.
Similarly,
\begin{equation}\label{formula:PoissonchaincpxofKoszuldual}
\Omega^\bullet(A^!)=\mathbf\Lambda(\xi_1,\cdots,\xi_n,d\xi_1,\cdots, d\xi_n),
\end{equation}
where
$|\xi_i|=-1$ and $|d\xi_i|=0$ for $i=1,\cdots, n$.
From \eqref{formula:Poissonchaincpx} and
\eqref{formula:PoissonchaincpxofKoszuldual}
we have the following:
\begin{eqnarray}
\mathfrak X^\bullet(A)&=&\mathrm{Hom}_A(\Omega^\bullet(A),A)\nonumber\\
&=&\mathrm{Hom}_{\mathbf\Lambda(x_1,\cdots,x_n)}
(\mathbf\Lambda(x_1,\cdots,x_n,dx_1,\cdots,dx_n), \mathbf\Lambda(x_1,\cdots,x_n))
\nonumber\\
&=&\mathrm{Hom}_{\mathbf\Lambda(x_1,\cdots,x_n)}(\mathbf\Lambda(x_1,\cdots,x_n)\otimes
\mathbf\Lambda(dx_1,\cdots,dx_n), \mathbf\Lambda(x_1,\cdots,x_n))\nonumber\\
&=&\mathrm{Hom}(\mathbf\Lambda(dx_1,\cdots,dx_n), \mathbf\Lambda(x_1,\cdots,x_n))\nonumber\\
&=&\mathbf\Lambda\Big(x_1,\cdots,x_n,
\frac{\partial}{\partial x_1},\cdots,\frac{\partial}{\partial x_n}\Big)\nonumber
\end{eqnarray}
and
\begin{eqnarray}
\mathfrak X^\bullet(A^!)&=&\mathrm{Hom}_{A^!}(\Omega^\bullet(A^!),A^!)\nonumber\\
&=&\mathrm{Hom}_{\mathbf\Lambda(\xi_1,\cdots,\xi_n)}
(\mathbf\Lambda(\xi_1,\cdots,\xi_n,d\xi_1,\cdots,d\xi_n), \mathbf\Lambda(\xi_1,\cdots,\xi_n))\nonumber\\
&=&\mathrm{Hom}_{\mathbf\Lambda(\xi_1,\cdots,\xi_n)}
(\mathbf\Lambda(\xi_1,\cdots,\xi_n)\otimes\mathbf\Lambda
(d\xi_1,\cdots,d\xi_n), \mathbf\Lambda(\xi_1,\cdots,\xi_n))\nonumber\\
&=&\mathrm{Hom}(\mathbf\Lambda
(d\xi_1,\cdots,d\xi_n), \mathbf\Lambda(\xi_1,\cdots,\xi_n))\nonumber\\
&=&\mathbf\Lambda\Big(\xi_1,\cdots,\xi_n,
\frac{\partial}{\partial \xi_1},\cdots,\frac{\partial}{\partial \xi_n}\Big).\nonumber
\end{eqnarray}
Under the identification
\begin{equation}\label{111identificationofPoissonchainandcochaincpxes}
x_i\mapsto\frac{\partial}{\partial \xi_i},
\quad
\frac{\partial}{\partial x_i}\mapsto \xi_i,
\quad
i=1,\cdots, n,
\end{equation}
we get $\mathfrak X^\bullet(A)\cong\mathfrak X^\bullet(A^!)$.
It is now straightforward to check that this isomorphism is in fact an isomorphism of
DG Gerstenhaber algebras.

(2) Similarly,
\begin{eqnarray}
\mathfrak X^\bullet(A^!; A^{\ac})
&=&\mathrm{Hom}_{A^!}(\Omega^\bullet(A^!), A^{\ac})\nonumber\\
&=&\mathrm{Hom}_{\mathbf\Lambda(\xi_1,\cdots,\xi_n)}(\mathbf\Lambda(\xi_1,\cdots,\xi_n,d\xi_1,\cdots, d\xi_n),
\mathrm{Hom}(\mathbf\Lambda(\xi_1,\cdots,\xi_n),k))\nonumber\\
&=&\mathrm{Hom}_{\mathbf\Lambda(\xi_1,\cdots,\xi_n)}(\mathbf\Lambda(\xi_1,\cdots,\xi_n)\otimes\mathbf\Lambda
(d\xi_1,\cdots, d\xi_n),
\mathrm{Hom}(\mathbf\Lambda(\xi_1,\cdots,\xi_n),k))\nonumber\\
&=&\mathrm{Hom}(\mathbf\Lambda
(d\xi_1,\cdots, d\xi_n),
\mathrm{Hom}(\mathbf\Lambda(\xi_1,\cdots,\xi_n),k))\nonumber\\
&=&\mathrm{Hom}(\mathbf\Lambda
(d\xi_1,\cdots, d\xi_n)\otimes\mathbf\Lambda(\xi_1,\cdots,\xi_n), k)\nonumber\\
&=&\mathrm{Hom}(\mathbf\Lambda
(d\xi_1,\cdots, d\xi_n,\xi_1,\cdots,\xi_n), k)\nonumber\\
&=&\mathbf\Lambda\Big(\xi_1^*,\cdots,\xi_n^*,
\frac{\partial}{\partial \xi_1},\cdots,
\frac{\partial}{\partial\xi_n}
\Big).\nonumber
\end{eqnarray}
Under the identifications
\begin{equation}\label{identificationofPoissonchainandcochaincpxes}
x_i\mapsto\frac{\partial}{\partial \xi_i},
\quad dx_i\mapsto\xi_i^*,
\quad
i=1,\cdots, n,
\end{equation}
we get 
$\Omega^\bullet(A)\cong\mathfrak X^\bullet(A^!; A^{\ac})$.
It is again straightforward to show that this is an isomorphism of mixed complexes. 

Note that the volume form of $A$ is $dx_1dx_2\cdots dx_n$ while the volume form
of $A^!$ is $\xi_1^*\xi_2^*\cdots\xi_n^*$;
under \eqref{111identificationofPoissonchainandcochaincpxes}
and \eqref{identificationofPoissonchainandcochaincpxes},
the former is a Poisson cycle if and only if so is the latter.

(3) With identifications \eqref{111identificationofPoissonchainandcochaincpxes}
and \eqref{identificationofPoissonchainandcochaincpxes},
it is direct to see that the inner products given by \eqref{def:innerproductofvectorfields}
and by \eqref{def:innerproductondualvectors}
are compatible.
\end{proof}

\begin{corollary}\label{cor:isoofLieinftyforKPoisson}
Suppose $A=k[x_1,\cdots,x_n]$ is a unimodular quadratic Poisson algebra, 
and let $A^!$ be its Koszul dual.
Then
$$
\mathrm{PC}_\bullet^{-}(A)\cong\mathrm{PC}^\bullet(A^!)
$$
as gravity algebras.
\end{corollary}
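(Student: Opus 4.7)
The plan is to mimic the proof of Corollary \ref{cor:isoofLieinftyforKCY} line by line, with Proposition \ref{thm:iso_BatalinVilkovisky} playing the role formerly played by Proposition \ref{thm:isoofdifferentialcalculuswithdualityforKoszulCYalgebras}. Since all the ingredients in the Poisson setting have already been arranged in parallel with the associative setting, essentially no new calculation should be required: the argument is a diagram chase through the structures we have built.

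First, I would apply Proposition \ref{thm:iso_BatalinVilkovisky} to the pair $(A, A^!)$. Part (1) gives $\mathrm{CP}^\bullet(A) \cong \mathrm{CP}^\bullet(A^!)$ as DG algebras, and in particular as DG Gerstenhaber algebras, so on cohomology $\mathrm{HP}^\bullet(A) \cong \mathrm{HP}^\bullet(A^!)$ as Gerstenhaber algebras. Part (2) gives an isomorphism of mixed complexes
\[
(\mathrm{CP}_\bullet(A),\partial,d) \;\cong\; (\mathfrak X^\bullet(A^!; A^{\ac}),\delta,d^\ast),
\]
which carries the volume form of $A$ to the volume form of $A^!$. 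Part (3) says that the inner-product actions are intertwined. Put together, these data assert that the two differential calculi with duality
\[
(\mathrm{HP}^\bullet(A),\mathrm{HP}_\bullet(A),\wedge,\iota,[-,-],d,\eta)
\quad\text{and}\quad
(\mathrm{HP}^\bullet(A^!),\mathrm{HP}^\bullet(A^!;A^{\ac}),\wedge,\iota,[-,-],d^\ast,\eta^!)
\]
are isomorphic.

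Next, I would invoke Theorems \ref{thm:differentialcalculusstructureofunimodularPoisson} and \ref{thm:differentialcalculusstructureofunimodularFrobeniusPoisson}, which through Lambre's Theorem \ref{Thm_Lambre} transfer the Poincar\'e isomorphism to produce Batalin-Vilkovisky algebra structures on $\mathrm{HP}_\bullet(A)$ and on $\mathrm{HP}^\bullet(A^!; A^{\ac})$ respectively, both with generator coming from $d$ (respectively $d^\ast$). Because the isomorphism of differential calculi with duality produced in the previous step intertwines the cup/cap products, the brackets, the duality map $\mathrm{PD}=\iota_{(-)}\eta$, and the mixed differentials, the induced map
\[
\mathrm{HP}_\bullet(A) \;\xrightarrow{\;\cong\;}\; \mathrm{HP}^\bullet(A^!; A^{\ac})
\]
is an isomorphism of Batalin-Vilkovisky algebras.

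Finally, I would apply Theorem \ref{mainthm}(2) to the quasi-isomorphism of mixed complexes $(\Omega^\bullet(A),\partial,d) \simeq (\mathfrak X^\bullet(A^!;A^{\ac}),\delta,d^\ast)$, whose induced map on $b$-homology is the BV isomorphism just established. The theorem then directly yields the desired isomorphism of gravity algebras
\[
\mathrm{PC}^{-}_\bullet(A) \;\cong\; \mathrm{PC}^\bullet(A^!).
\]
The only potentially delicate point I foresee, and the step I would double-check, is the compatibility between the BV operator obtained on $\mathrm{HP}_\bullet(A)$ via transfer of the Schouten bracket through $\mathrm{PD}$ and the operator $d$ already living on the mixed complex $(\Omega^\bullet(A),\partial,d)$: Theorem \ref{mainthm}(1) requires that $B=d$ be the generator of the Gerstenhaber bracket on the BV side. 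This is precisely the content of Theorem \ref{BVcriterion} applied in each case, so no extra work is needed beyond citing it, but this is the hinge on which the whole argument turns.
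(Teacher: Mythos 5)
Your proposal is correct and follows essentially the same route as the paper: the paper's proof simply states that Proposition \ref{thm:iso_BatalinVilkovisky} together with Theorems \ref{thm:differentialcalculusstructureofunimodularPoisson} and \ref{thm:differentialcalculusstructureofunimodularFrobeniusPoisson} shows that $(\mathrm{CP}_\bullet(A),\partial,d)$ and $(\mathrm{CP}^\bullet(A^!;A^{\ac}),\delta,d^*)$ satisfy the hypotheses of Theorem \ref{mainthm}(2), exactly the chain of reasoning you spell out. Your closing remark about the generator being $d$ (resp.\ $d^*$) is indeed the content of Theorem \ref{BVcriterion} and needs no further argument.
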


\begin{proof} (Compare with the proof of Proposition \ref{cor:isoofLieinftyforKCY})
Proposition \ref{thm:iso_BatalinVilkovisky} 
together with Theorems
\ref{thm:differentialcalculusstructureofunimodularPoisson}
and \ref{thm:differentialcalculusstructureofunimodularFrobeniusPoisson}
shows that
\begin{equation*}
(\mathrm{CP}_\bullet(A), \partial, d)\quad\mbox{and}\quad
(\mathrm{CP}^{\bullet}(A^!; A^{\ac}),\delta, d^*),
\end{equation*}
satisfy the conditions of Theorem \ref{mainthm} (2),
from which the conclusion follows.
\end{proof}

\section{Deformation quantization}\label{sect:deformation}

The purpose of this section is to relate the gravity algebras obtained
in previous sections by means of deformation quantization.
This completes relationships (c) and (d) listed in \S\ref{Sect_intro}.


In the following, we work over $k[\![\hbar]\!]$, where $\hbar$ is a formal parameter.
Recall that for a Poisson algebra $A$ with bracket $\{-,-\}$,
its {\it deformation quantization}, denoted by $A_{\hbar}$,
is a (completed) $k[\![\hbar]\!]$-linear associative product 
(called the {\it star-product}) on $A[\![\hbar]\!]$:
$$
a\ast b=a\cdot b+\mu_1(a,b)\hbar+\mu_2(a,b)\hbar^2+\cdots,
$$
where $\hbar$ is the formal parameter and $\mu_i$ are bilinear operators,
satisfying
$$
\lim_{\hbar\to 0}\frac{1}{\hbar}\left(a\ast b-b\ast a\right)=\{a,b\},\quad\mbox{for all}\; a, b\in A.
$$
In other words,
a deformation quantization of $A$
is a formal quantization of $A$ in the direction of the Poisson bracket.

\subsection{Deformation quantization of Calabi-Yau Poisson algebras}

From now on, $A=k[x_1,\cdots, x_n]$. By Example \ref{Ex:polynomial}, it
is a Calabi-Yau algebra of dimension $n$ with volume class given
by $dx_1 dx_2\cdots dx_n$.

Let $\mu\in\bar{\mathrm C}^{2}(A[\![\hbar]\!])$ be the Hochschild coboundary.
For any $\tilde\mu\in\hbar\cdot\bar{\mathrm C}^{2}(A[\![\hbar]\!])$,
$$\mu+\tilde\mu: A[\![\hbar]\!]\otimes_{k[\![\hbar]\!]} A[\![\hbar]\!]\to A[\![\hbar]\!]$$
defines a new product 
if and only if $\tilde\mu$ is a Maurer-Cartan element, namely 
$$\mu(\tilde\mu)+\frac{1}{2}[\tilde\mu,\tilde\mu]=0.$$
Kontsevich proved in \cite{Kontsevich} that
there is an $L_\infty$-quasi-isomorphism
\begin{equation}\label{Kontsevichsquasiisomorphism}
\mathfrak X^\bullet(A[\![\hbar]\!])\to \bar{\mathrm C}^\bullet(A[\![\hbar]\!])
\end{equation}
between these two DG 
Lie algebras, 
whose first term is the classical Hochschild-Kostant-Rosenberg map.
Here the differential of the former is zero.
Thus as a corollary, up to gauge equivalences,
the set of Maurer-Cartan elements of 
$\mathfrak X^{2}(A[\![\hbar]\!])$,
which is exactly the set of Poisson structures on $A[\![\hbar]\!]$,
is in one-to-one correspondence to
the set of Maurer-Cartan elements of
$\bar{\mathrm C}^{2}(A[\![\hbar]\!])$, which is exactly the set of 
deformation quantizations of $A[\![\hbar]\!]$.

Now suppose $A$ is Poisson with Poisson structure $\pi$. We equip with
$A[\![\hbar]\!]$ the Poisson structure 
$\hbar\pi$. Let $\mu+\tilde\mu\in\bar{\mathrm C}^{2}(A[\![\hbar]\!])$ 
be the corresponding deformed product, where $\tilde\mu\in \hbar\cdot
\bar{\mathrm C}^{2}(A[\![\hbar]\!])$.
Then the Cyclic Formality
Conjecture for chains, proved by Willwacher in 
\cite[Theorem 1.3 and Corollary 1.4]{Willwacher}, says that
\begin{equation}\label{cyclicformalityconjectureforchains}
(\Omega^\bullet(A[\![\hbar]\!]), L_{\hbar\pi}, d)\simeq(\mathrm C_\bullet(A[\![\hbar]\!]),
L_{\mu+\tilde\mu},B)
\end{equation}
is a quasi-isomorphism of mixed complexes.
Moreover, Dolgushev proved in \cite{Dolgushev} the following:

\begin{theorem}[Dolgushev]\label{Dolgushevsresult}
For a Calabi-Yau Poisson algebra $A$, 
its deformation quantization,
denoted by $A_{\hbar}$, is Calabi-Yau over $k[\![\hbar]\!]$ if and only if $A$ is
unimodular.
\end{theorem}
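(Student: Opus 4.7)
The plan is to characterize the Calabi-Yau property of $A_\hbar$ via the existence of a distinguished volume class in $\mathrm{HH}_n(A_\hbar)$, and then to transport this characterization to the Poisson side through the cyclic formality quasi-isomorphism \eqref{cyclicformalityconjectureforchains}, where it becomes the unimodularity condition. Since $A=k[x_1,\ldots,x_n]$ is homologically smooth, so is $A_\hbar$: the standard Koszul bimodule resolution deforms $\hbar$-adically using the Maurer-Cartan element $\tilde\mu$, and finite projectivity survives modulo $\hbar$ by Nakayama. Following the argument behind Theorem \ref{thm:differentialcalculuswithdualityforCY}, being $n$-Calabi-Yau over $k[\![\hbar]\!]$ is then equivalent to the existence of a class $\eta_\hbar\in\mathrm{HH}_n(A_\hbar)$ which reduces modulo $\hbar$ to the classical volume form $[dx_1\cdots dx_n]$, is Connes-closed ($B\eta_\hbar=0$), and for which cap product with $\eta_\hbar$ gives an isomorphism $\mathrm{HH}^\bullet(A_\hbar)\xrightarrow{\sim}\mathrm{HH}_{n-\bullet}(A_\hbar)$.

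The next step is to import such $\eta_\hbar$ from the Poisson side. Willwacher's cyclic formality \eqref{cyclicformalityconjectureforchains} supplies a mixed-complex quasi-isomorphism
\[
\Phi_*:(\Omega^\bullet(A[\![\hbar]\!]),L_{\hbar\pi},d)\xrightarrow{\simeq}(\mathrm C_\bullet(A[\![\hbar]\!]),L_{\mu+\tilde\mu},B),
\]
whose leading term is the Hochschild-Kostant-Rosenberg map. If $(A,\pi)$ is unimodular, then $L_\pi(dx_1\cdots dx_n)=0$ by definition (see diagram \eqref{diag:unimodularPoisson}), so $dx_1\cdots dx_n$ already represents a class in $\mathrm{HP}_n(A[\![\hbar]\!])$ that is automatically $d$-closed. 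Applying $\Phi_*$ yields the desired class $\eta_\hbar\in\mathrm{HH}_n(A_\hbar)$, and because $\Phi_*$ is a mixed-complex morphism, $B\eta_\hbar=\Phi_*(d(dx_1\cdots dx_n))=0$ and $\eta_\hbar$ reduces to the classical volume form modulo $\hbar$.

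It remains to check that capping with $\eta_\hbar$ is an isomorphism. For this I would invoke the module-level compatibility of the Kontsevich formality on cochains with the Willwacher/Shoikhet formality on chains: under both formalities, the cap product action of $\mathrm{HH}^\bullet(A_\hbar)$ on $\mathrm{HH}_\bullet(A_\hbar)$ is intertwined with the contraction $\iota_{(-)}\omega$ for $\omega=dx_1\cdots dx_n$. Modulo $\hbar$ this contraction is the classical affine-space Poincar\'e duality $\mathfrak X^\bullet(A)\xrightarrow{\sim}\Omega^{n-\bullet}(A)$ (see Example \ref{Ex:polynomial}), so by the $\hbar$-adic Nakayama lemma the map $\eta_\hbar\cap(-)$ is an isomorphism over $k[\![\hbar]\!]$. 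A standard resolution-lifting argument then upgrades this Hochschild-level duality into the derived bimodule statement in the definition of Calabi-Yau. For the converse, pulling the $A_\hbar$-volume class back along $\Phi_*$ produces a form in $\Omega^n(A[\![\hbar]\!])$ that is both $L_{\hbar\pi}$- and $d$-closed; its $\hbar=0$ reduction is then a closed volume form witnessing unimodularity of $\pi$.

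The main obstacle will be the \emph{module-level} compatibility of cyclic formality needed in the third paragraph. The formality theorems as stated are quasi-isomorphisms of Lie algebras (Kontsevich) and of mixed complexes (Willwacher), whereas the cap-product compatibility invoked here is a compatibility of full Tamarkin-Tsygan differential calculi; it is not formal but has been established by further globalization and Stokes-type arguments in the Kontsevich/Shoikhet/Willwacher framework. This compatibility is precisely the technical heart that Dolgushev supplies, and it is essential both for verifying that $\eta_\hbar\cap(-)$ is an isomorphism and for extracting an unimodular Poisson structure from a Calabi-Yau quantization.
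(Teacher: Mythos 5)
The paper does not actually prove this statement; its ``proof'' is a citation to Dolgushev \cite{Dolgushev} (with an alternative argument in \cite{dTdVVdB}), so your proposal has to stand on its own. Its overall shape --- transport a Poisson-closed volume form through Willwacher's cyclic formality \eqref{cyclicformalityconjectureforchains} and check that capping with the resulting Hochschild class is an isomorphism by $\hbar$-adic Nakayama --- is the right intuition, and the cap-product compatibility you single out in your last paragraph as the main obstacle is in fact available off the shelf (Calaque--Rossi \cite{CR}, which the paper already invokes in the proof of Corollary \ref{cor:isoofLieinftyforPoisson}).

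The genuine gap is earlier, in your first reduction. You take as a working definition of ``$n$-Calabi-Yau'' the existence of a class $\eta_\hbar\in\mathrm{HH}_n(A_\hbar)$ such that $\eta_\hbar\cap(-):\mathrm{HH}^\bullet(A_\hbar)\to\mathrm{HH}_{n-\bullet}(A_\hbar)$ is an isomorphism. That is a \emph{consequence} of the Calabi-Yau property (Theorem \ref{thm:differentialcalculuswithdualityforCY}), but the converse is not automatic: condition \eqref{CY_cond} is an isomorphism $\mathrm{RHom}_{A^e}(A,A\otimes A)\cong\Sigma^{-n}A$ in $D(A^e)$, and the homology-level criterion of de Thanhoffer de V\"olcsey--Van den Bergh requires the cap product with $\eta_\hbar$ to induce duality with coefficients in the bimodule $A_\hbar\otimes A_\hbar$ (equivalently in all bimodules), not merely in $A_\hbar$ itself. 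Concretely, since $A_\hbar$ is homologically smooth and satisfies Van den Bergh duality, $U:=\mathrm{Ext}^n_{A_\hbar^e}(A_\hbar,A_\hbar^e)$ is an invertible bimodule of the form $(A_\hbar)_\sigma$ with $\sigma=\mathrm{id}+O(\hbar)$, and being Calabi-Yau means precisely that $\sigma$ is inner; a class in $\mathrm{HH}_n(A_\hbar)\cong\mathrm{HH}^0(A_\hbar,U)$ inducing an isomorphism on $\mathrm{HH}^\bullet(A_\hbar)$ alone does not visibly trivialize $U$. Identifying the class of $\sigma$ with the modular class of $\pi$ is exactly the content of Dolgushev's proof, and it is what your ``standard resolution-lifting argument'' silently assumes. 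Two smaller points: $B\eta_\hbar=0$ is not part of the Calabi-Yau definition (it is the ``exact'' refinement of \cite{dTdVVdB}); and in the converse direction every top-degree form on $k[x_1,\dots,x_n]$ is automatically $d$-closed, so what actually needs checking there is that the pulled-back Poisson cycle has invertible leading coefficient, which does follow from the duality reduced mod $\hbar$.
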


\begin{proof}
See Dolgushev \cite[Theorem 3]{Dolgushev} (an alternative proof by using
differential graded Lie algebras can be found in \cite[(1.3)]{dTdVVdB}.
\end{proof}

\begin{corollary}\label{cor:isoofLieinftyforPoisson}
Suppose $A$ is a unimodular Poisson Calabi-Yau algebra.
Denote by $A_\hbar$ Kontsevich's deformation quantization
of $A$.
Then
$$
\mathrm{PC}_\bullet^{-}(A[\![\hbar]\!])\cong
\mathrm{HC}_\bullet^{-}(A_\hbar)
$$ 
as gravity algebras.
\end{corollary}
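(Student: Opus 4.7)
The plan is to apply Theorem~\ref{mainthm}(2) directly to Willwacher's cyclic formality quasi-isomorphism \eqref{cyclicformalityconjectureforchains}, using Dolgushev's theorem to put a Batalin--Vilkovisky structure on the quantized side. First, since $A$ is unimodular Poisson Calabi--Yau, Dolgushev's Theorem~\ref{Dolgushevsresult} asserts that $A_\hbar$ is a Calabi--Yau algebra over $k[\![\hbar]\!]$; hence Theorem~\ref{thm:differentialcalculuswithdualityforCY} endows $(\mathrm{HH}_\bullet(A_\hbar),B)$ with a Batalin--Vilkovisky algebra structure whose generator is Connes' $B$, as required by Theorem~\ref{mainthm}. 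On the Poisson side, Theorem~\ref{thm:differentialcalculusstructureofunimodularPoisson} applied to $A[\![\hbar]\!]$ (with Poisson bivector $\hbar\pi$, which is still unimodular with volume form $dx_1\wedge\cdots\wedge dx_n$) produces a differential calculus with duality, so that $(\mathrm{HP}_\bullet(A[\![\hbar]\!]),d)$ is a Batalin--Vilkovisky algebra whose generator is the de Rham differential $d$, obtained by transporting the BV structure on $\mathrm{HP}^\bullet(A[\![\hbar]\!])$ across the Poincaré duality $\iota_{(-)}\eta$.

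With these two mixed complexes both satisfying the hypothesis of Theorem~\ref{mainthm}(1), it suffices to check that \eqref{cyclicformalityconjectureforchains} induces on homology a Batalin--Vilkovisky algebra isomorphism. The underlying Gerstenhaber algebra isomorphism
\[
\mathrm{HP}^\bullet(A[\![\hbar]\!])\;\cong\;\mathrm{HH}^\bullet(A_\hbar)
\]
is the standard consequence of Kontsevich's $L_\infty$-quasi-isomorphism \eqref{Kontsevichsquasiisomorphism} applied to the Maurer--Cartan element $\hbar\pi$. By naturality of the Poincaré dualities (the image of the de Rham volume form $dx_1\wedge\cdots\wedge dx_n$ under Willwacher's map is the Calabi--Yau volume class of $A_\hbar$, up to a unit), this Gerstenhaber isomorphism transports to the claimed BV isomorphism on Hochschild/Poisson homology, with $d$ corresponding to $B$. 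Once this compatibility is established, Theorem~\ref{mainthm}(2) immediately delivers the isomorphism of gravity algebras.

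The hard part will be verifying that the isomorphism of Batalin--Vilkovisky structures genuinely intertwines \emph{both} the cap products and the differentials $d$, $B$ on the \emph{homology} level. Kontsevich's formality gives the Gerstenhaber side for free, and Willwacher's cyclic formality gives the intertwining of $d$ with $B$; the nontrivial input is that these two pieces fit together, i.e., that the Willwacher chain-level $L_\infty$-morphism is compatible (at least up to homotopy) with the contraction operators $\iota_{(-)}$, so that the induced maps commute with the respective Poincaré duality squares relating differential calculi. This compatibility is essentially the content of the ``formality for calculi'' statements of Dolgushev--Tamarkin--Tsygan and Willwacher--Calaque, and once it is invoked, the remaining verification reduces to naturality of the construction in Theorem~\ref{mainthm}(2).
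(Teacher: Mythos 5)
Your proposal is correct and follows essentially the same route as the paper: Kontsevich's formality for the Gerstenhaber/cohomology side, Willwacher's cyclic formality for the mixed-complex side, Dolgushev's theorem to get the Calabi--Yau (hence BV) structure on the quantized side, compatibility of the contraction/cap products (the paper cites Calaque--Rossi for exactly this), matching of volume classes via Hochschild--Kostant--Rosenberg, and then Theorem~\ref{mainthm}(2). The only minor imprecision is attributing the multiplicativity of the cohomology isomorphism to Kontsevich's $L_\infty$-morphism alone; the compatibility with cup products is a separate result of Manchon--Torossian, which the paper invokes explicitly.
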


\begin{proof}
First, from \eqref{Kontsevichsquasiisomorphism}, we obtain
an $L_\infty$-quasi-isomorphism
$$
(\mathrm{CP}^\bullet(A[\![\hbar]\!]), \delta_{\hbar\pi})
\simeq
(\bar{\mathrm C}^{\bullet}(A[\![\hbar]\!]), \delta_{\mu+\tilde \mu}).
$$
Later it is proved by Manchon and Torossian
in \cite[Th\'eor\`eme 1.2]{MT} that the above is also a quasi-isomorphism 
of DG algebras.

Second, just to repeat \eqref{cyclicformalityconjectureforchains} we have that
\begin{equation*}
(\Omega^\bullet(A[\![\hbar]\!]), L_{\hbar\pi}, d)\simeq(\bar{\mathrm C}_\bullet(A[\![\hbar]\!]),
L_{\mu+\tilde\mu},B)
\end{equation*}
is a quasi-isomorphism of mixed complexes.

Third, it is proved by Calaque and Rossi in \cite[Theorem 6.1]{CR}
that there is a commutative diagram
\begin{equation}\label{Liemoduleactionsarecompatible}
\xymatrixcolsep{4pc}
\xymatrix{
\mathfrak X^\bullet(A[\![\hbar]\!])\ar@{~>}[r]\ar[d]_{\simeq}&\Omega^\bullet(A[\![\hbar]\!])\\
\bar{\mathrm{C}}^\bullet(A[\![\hbar]\!])\ar@{~>}[r]
&\bar{\mathrm{C}}_\bullet(A[\![\hbar]\!]),\ar[u]^{\simeq}
}
\end{equation}
where the curved arrows mean the DG algebra action.

Thus combining the above three results, we have that on the homology level,
$$
(\mathrm{HP}^\bullet(A[\![\hbar]\!]),\mathrm{HP}_\bullet(A[\![\hbar]\!])) \quad\mbox{and}\quad
(\mathrm{HH}^\bullet(A_\hbar),\mathrm{HH}_\bullet(A_\hbar))
$$
are isomorphic as differential calculus. Theorem \ref{Dolgushevsresult} implies
that both pairs are in fact differential calculus with duality. Thus to show 
they are isomorphic as differential calculus with duality, we need to show
that the volume class is mapped to the volume class. However, this is
guaranteed by the Hochschild-Kostant-Rosenberg theorem.
The corollary now follows from Theorem \ref{BVcriterion}.
\end{proof}

\subsection{Deformation quantization of Frobenius Poisson algebras}
Cattaneo and Felder showed in \cite[Appendix]{CF} that
Kontsevich's $L_\infty$-quasi-isomorphism holds also for graded manifolds,
with exactly the same formula. That is, we have
$L_\infty$-quasi-isomorphism
\begin{equation}\label{Kontsevichsquasiisomorphismforgradedmanifolds}
\mathfrak X^\bullet(A^![\![\hbar]\!])\to\bar{\mathrm C}^\bullet(A^![\![\hbar]\!]),
\end{equation}
where $A^!=\mathbf\Lambda(\xi_1,\cdots,\xi_n)$.

Now suppose $A^!$ is Poisson with Poisson structure $\pi^!$. We equip with
$A^![\![\hbar]\!]$ the Poisson structure $\hbar\pi^!$. 
Denote by $\mu^!$ the product of $A^![\![\hbar]\!]$
and let $\mu^!+\tilde\mu^!\in\bar{\mathrm C}^{2}(A^![\![\hbar]\!])$ 
be the corresponding deformed product.
Then the Cyclic Formality
Conjecture for cochains, proposed by Felder and Shoikhet in \cite{FS}
and proved by Willwacher and Calaque in \cite[Theorem 2]{WC}, says that
\begin{equation}\label{cyclicformalityconjectureforcochains}
(\mathfrak X^\bullet(A^![\![\hbar]\!]; A^{\ac}[\![\hbar]\!]), L_{\hbar\pi^!}^*, d^*)
\simeq(\mathrm C^\bullet(A^![\![\hbar]\!]; A^{\ac}[\![\hbar]\!]),
L_{\mu^!+\tilde\mu^!}^*,B^*)
\end{equation}
is a quasi-isomorphism of mixed complexes.
Moreover, Willwacher-Calaque also showed in \cite{WC} that:

\begin{theorem}[Willwacher-Calaque]\label{thm:WC}
Suppose $A^{!}=\mathbf\Lambda(\xi_1,\cdots,\xi_n)$ is a Frobenius Poisson algebra,
then Kontsevich's deformation quantization of $A^{!}$, say $A^{!}_{\hbar}$,
is symmetric Frobenius if and only if $A^{!}$ is unimodular.
\end{theorem}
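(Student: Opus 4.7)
The plan is to mirror the strategy used for Corollary \ref{cor:isoofLieinftyforPoisson}, but now on the ``cochain'' side of the formality picture, with the volume form playing the role of the Frobenius pairing. The key ingredients are already at hand: Kontsevich's $L_\infty$-quasi-isomorphism \eqref{Kontsevichsquasiisomorphismforgradedmanifolds} for the graded manifold $A^!$, and, more crucially, the cyclic formality quasi-isomorphism of mixed complexes \eqref{cyclicformalityconjectureforcochains} proved by Willwacher and Calaque.

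First, I would reformulate both conditions cohomologically. A symmetric Frobenius structure of degree $n$ on $A^!_\hbar$ amounts to a non-degenerate element $\eta_\hbar\in\bar{\mathrm C}^n(A^!_\hbar;(A^!_\hbar)^*)$ that is simultaneously a Hochschild cocycle (the Frobenius/bimodule-map condition, i.e., $\delta_{\mu^!+\tilde\mu^!}\eta_\hbar=0$) and cyclic, which is exactly $B^*\eta_\hbar=0$ (the symmetry of the pairing). On the Poisson side, by the discussion following \eqref{formula:unimodularcyclicPoisson}, $A^!$ being unimodular Frobenius Poisson amounts to the existence of a non-degenerate volume form $\eta^!\in\mathfrak X^n(A^!;A^{\ac})$ (the top form in $A^{\ac}$) that is simultaneously $d^*$-closed (automatic, since it lives in the extremal degree) and a Poisson cocycle for $\hbar\pi^!$, i.e., $L_{\hbar\pi^!}^*\eta^!=0$, which is precisely unimodularity.

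Second, I would invoke \eqref{cyclicformalityconjectureforcochains}: the quasi-isomorphism of mixed complexes
\[
(\mathfrak X^\bullet(A^![\![\hbar]\!];A^{\ac}[\![\hbar]\!]),L_{\hbar\pi^!}^*,d^*)\simeq(\bar{\mathrm C}^\bullet(A^![\![\hbar]\!];A^{\ac}[\![\hbar]\!]),L_{\mu^!+\tilde\mu^!}^*,B^*)
\]
induces an isomorphism of negative cyclic cohomologies, hence a bijection between classes of ``doubly closed'' elements on the two sides. For the ``only if'' direction, starting from a symmetric Frobenius $\eta_\hbar$, transport the class back to a Poisson-closed, $d^*$-closed element of $\mathfrak X^\bullet(A^!;A^{\ac})$, and check it is the class of the top volume form; for the ``if'' direction, start with the unimodular $\eta^!$ and transport forward to obtain a cyclic Hochschild cocycle on $A^!_\hbar$. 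Nondegeneracy is preserved because the leading term ($\hbar=0$) of the cyclic formality map is an HKR-type quasi-isomorphism that identifies the top Poisson volume form with the classical Frobenius pairing on $A^!$, and nondegeneracy is an open condition stable under deformation.

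The main obstacle is really packaged inside \eqref{cyclicformalityconjectureforcochains}: establishing the cyclic formality for cochains (proved in \cite{WC}) is the hard analytic and combinatorial heart of the matter, requiring configuration-space integrals with a precise cyclic symmetry to guarantee that the $B^*$ operator on the Hochschild side is intertwined with $d^*$ on the polyvector side. Once that theorem is in hand, the argument above is essentially a diagram chase together with a leading-order check of nondegeneracy; the only non-routine point at our level is verifying that the distinguished volume class on one side maps to the distinguished cyclic-Frobenius class on the other, for which one uses naturality of the formality map under the grading on $A^{\ac}$.
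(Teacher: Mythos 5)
The paper does not actually prove this statement: its ``proof'' is the single line ``See Willwacher--Calaque \cite[Theorem 37]{WC}'', so the entire content is outsourced to the literature. Your proposal instead derives the statement from the cyclic formality quasi-isomorphism \eqref{cyclicformalityconjectureforcochains} (itself \cite[Theorem 2]{WC}), which is essentially the route taken inside \cite{WC} and is a legitimate, more informative presentation: transport the distinguished class through the twisted formality morphism and use that its leading term is the identity/HKR map to control nondegeneracy modulo $\hbar$. Two points need tightening. First, the graded symmetry and cyclic invariance of the Frobenius pairing are \emph{not} encoded by $B^*\eta_\hbar=0$: the Frobenius form lives in tensor-length $0$ of $\bar{\mathrm C}^\bullet(A^!_\hbar;(A^!_\hbar)^*)$, where $B^*$ vanishes identically, and the trace condition $\phi(a\star b)=\pm\phi(b\star a)$ is exactly the Hochschild cocycle condition in that length; consequently the cyclic ($B^*$ versus $d^*$) part of \eqref{cyclicformalityconjectureforcochains} is not what carries this particular statement --- the quasi-isomorphism of the underlying $\delta$-complexes already suffices. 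Second, a symmetric Frobenius structure requires a cocycle concentrated in tensor-length $0$ (an honest bimodule map $A^!_\hbar\to(A^!_\hbar)^*$), whereas the formality morphism a priori produces a cocycle spread over all tensor-lengths; this is rescued by observing that the (deformed) Hochschild coboundary raises tensor-length by exactly one, so the length-$0$ component of any cocycle is itself a cocycle, i.e.\ a trace, and its reduction modulo $\hbar$ is the classical volume form, whence nondegeneracy over $k[\![\hbar]\!]$. A symmetric remark applies to your ``only if'' direction, where one additionally uses homogeneity of $\pi^!$ (so that the Lichnerowicz differential in \eqref{formula:unimodularcyclicPoisson} preserves the internal grading) to isolate the top-degree component of the transported cocycle as a nonzero multiple of $\eta^!$. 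With these repairs your argument is a correct proof modulo the formality theorem of \cite{FS,WC}, which, as you rightly say, is where all the real analytic and combinatorial work lies.
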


\begin{proof}
See Willwacher-Calaque \cite[Theorem 37]{WC}.
\end{proof}



\begin{corollary}\label{cor:isoofLieinftyforFrobPoisson}
Suppose $A^!=\Lambda(\xi_1,\cdots,\xi_n)$ is unimodular Frobenius Poisson.
Denote by $A_\hbar^!$ Kontsevich's deformation quantization
of $A^!$.
Then
$$
\mathrm{PC}^\bullet(A^![\![\hbar]\!])\cong
\mathrm{HC}^\bullet(A^!_\hbar)
$$ 
as gravity algebras.
\end{corollary}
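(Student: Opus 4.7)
The plan is to mimic the strategy used for Corollary \ref{cor:isoofLieinftyforPoisson}, but on the ``dual'' side: instead of identifying the two differential calculi with duality $(\mathrm{HP}^\bullet(A[\![\hbar]\!]), \mathrm{HP}_\bullet(A[\![\hbar]\!]))$ and $(\mathrm{HH}^\bullet(A_\hbar), \mathrm{HH}_\bullet(A_\hbar))$, I want to identify
\[
(\mathrm{HP}^\bullet(A^![\![\hbar]\!]),\mathrm{HP}^\bullet(A^![\![\hbar]\!];A^{\ac}[\![\hbar]\!]))
\quad\mbox{and}\quad
(\mathrm{HH}^\bullet(A^!_\hbar),\mathrm{HH}^\bullet(A^!_\hbar;(A^!_\hbar)^*))
\]
as differential calculi with duality, and then invoke Theorem \ref{mainthm} (2) to pass from a Batalin--Vilkovisky isomorphism on the Hochschild side to the desired gravity isomorphism on negative cyclic cohomology.

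First I would assemble the three ``formality'' inputs. On the cochain (algebra) side, \eqref{Kontsevichsquasiisomorphismforgradedmanifolds} extends Kontsevich's $L_\infty$-quasi-isomorphism to the graded manifold $A^!=\Lambda(\xi_1,\ldots,\xi_n)$, which upgrades (by the Manchon--Torossian type argument used in the proof of Corollary \ref{cor:isoofLieinftyforPoisson}) to a DG algebra quasi-isomorphism $(\mathrm{CP}^\bullet(A^![\![\hbar]\!]),\delta_{\hbar\pi^!}) \simeq (\bar{\mathrm C}^\bullet(A^![\![\hbar]\!]),\delta_{\mu^!+\tilde\mu^!})$. On the chain (module) side, the Felder--Shoikhet / Willwacher--Calaque Cyclic Formality Conjecture for cochains, \eqref{cyclicformalityconjectureforcochains}, already gives a quasi-isomorphism of mixed complexes between the Poisson cochain complex with values in $A^{\ac}[\![\hbar]\!]$ and the Hochschild cochain complex with values in $(A^!_\hbar)^\ast$. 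The third input is the compatibility of the two module actions, i.e.\ a commutative diagram analogous to \eqref{Liemoduleactionsarecompatible} in the proof of Corollary \ref{cor:isoofLieinftyforPoisson}, but transported to the dual/cochain setting; this is where the formality map must intertwine the cap product $\iota$ with the dual cap $\cap^*$.

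Putting these together shows that the pair
$(\mathrm{HP}^\bullet(A^![\![\hbar]\!]),\mathrm{HP}^\bullet(A^![\![\hbar]\!];A^{\ac}[\![\hbar]\!]))$
is isomorphic to
$(\mathrm{HH}^\bullet(A^!_\hbar),\mathrm{HH}^\bullet(A^!_\hbar;(A^!_\hbar)^*))$
as differential calculi. By Theorems \ref{thm:differentialcalculuswithdualityforFrob} and \ref{thm:differentialcalculusstructureofunimodularFrobeniusPoisson}, each side carries a differential calculus with duality structure as soon as there is a compatible volume form; on the Poisson side the unimodularity of $A^!$ provides one, and on the Hochschild side Theorem \ref{thm:WC} of Willwacher--Calaque guarantees that $A^!_\hbar$ is symmetric Frobenius, hence possesses a volume form. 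Finally, the Hochschild--Kostant--Rosenberg component of Kontsevich's map sends the Poisson volume class $\xi_1^*\cdots\xi_n^*$ to the Frobenius volume class, so the duality data matches. Applying Theorem \ref{BVcriterion} (equivalently Theorem \ref{Thm_Lambre}) produces a Batalin--Vilkovisky isomorphism of the underlying Gerstenhaber algebras, and Theorem \ref{mainthm} (2) then upgrades this to an isomorphism of gravity algebras on negative cyclic cohomology.

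The main obstacle is the third point: verifying the dual module compatibility diagram. The analogous fact on the Calabi--Yau side is the Calaque--Rossi result cited in \eqref{Liemoduleactionsarecompatible}, and its dualized counterpart for graded manifolds is not literally quoted in the paper. I expect that it follows formally from the Felder--Shoikhet / Willwacher--Calaque construction by $k$-linear dualization of their module quasi-isomorphism (using the identification $\mathfrak X^\bullet(A^!;A^{\ac})=\mathrm{Hom}_k(\Omega^\bullet(A^!),k)$ from Proposition \ref{DC-on-Poisson-dual}), but writing down the precise diagram and tracking signs is the nontrivial step. Once that compatibility is in hand, the remainder is a purely formal assembly as in Corollary \ref{cor:isoofLieinftyforPoisson}.
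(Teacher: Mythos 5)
Your proposal follows essentially the same route as the paper's proof: upgrade Kontsevich's $L_\infty$-quasi-isomorphism for the graded manifold $A^!$ to a DG algebra quasi-isomorphism via Manchon--Torossian, use the Willwacher--Calaque cyclic formality for cochains as the mixed-complex input, and obtain the module compatibility by dualizing the Calaque--Rossi cap-product compatibility through the identification of $\mathfrak X^\bullet(A^!;A^{\ac})$ with the linear dual of $\Omega^\bullet(A^!)$ --- which is exactly how the paper handles the step you flag as the main obstacle. The assembly into an isomorphism of differential calculi with duality (using Theorem \ref{thm:WC} for the Frobenius structure on $A^!_\hbar$) and the appeal to Theorem \ref{BVcriterion} likewise match the paper.
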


\begin{proof}
(Compare with Corollary \ref{cor:isoofLieinftyforPoisson}).
Manchon and Torossian's result says that the $L_\infty$-quasi-isomorphism
\eqref{Kontsevichsquasiisomorphismforgradedmanifolds}
is also a quasi-isomorphism of DG algebras. To relate
with \eqref{cyclicformalityconjectureforcochains}, let us note that 
$$\mathfrak X^\bullet(A^![\![\hbar]\!]; A^{\ac}[\![\hbar]\!])
\quad\mbox{and}\quad
\Omega^\bullet(A^![\![\hbar]\!])$$
are linear dual to each other,
and so are
$$
\bar{\mathrm{C}}^\bullet(A^![\![\hbar]\!]; A^{\ac}[\![\hbar]\!])
\quad\mbox{and}\quad
\bar{\mathrm{C}}_\bullet(A^![\![\hbar]\!]).
$$
Thus in \eqref{Liemoduleactionsarecompatible}, replacing $A[\![\hbar]\!]$
with $A^![\![\hbar]\!]$
and then considering the associated adjoint action,
we obtain the following commutative diagram 
\begin{equation*}
\xymatrixcolsep{4pc}
\xymatrix{
\mathfrak X^\bullet(A^![\![\hbar]\!])\ar@{~>}[r]\ar[d]_{\simeq}
&\mathfrak X^\bullet(A^![\![\hbar]\!]; A^{\ac}[\![\hbar]\!])
\ar[d]_{\simeq}\\
\bar{\mathrm{C}}^\bullet(A^![\![\hbar]\!])\ar@{~>}[r]
&\bar{\mathrm{C}}^\bullet(A^![\![\hbar]\!]; A^{\ac}[\![\hbar]\!]).
}
\end{equation*}

Thus on the homology level, we obtain that
$$
(\mathrm{HP}^\bullet(A^![\![\hbar]\!]),\mathrm{HP}^\bullet(A^![\![\hbar]\!];
A^{\ac}[\![\hbar]\!])) \quad\mbox{and}\quad
(\mathrm{HH}^\bullet(A^!_\hbar),\mathrm{HH}^\bullet(A^!_\hbar; A^{\ac}_\hbar))
$$
are isomorphic as differential calculus.
Now Theorem \ref{thm:WC}
further implies that they are quasi-isomorphic as differential calculus with duality.
The corollary now follows from Theorem \ref{BVcriterion}.
\end{proof}

\subsection{Deformation quantization of quadratic unimodular Poisson algebras}

As explained by Shoikhet in \cite[\S1]{Shoikhet},
the Koszul duality theory can be extended
to algebras over $k[\![\hbar]\!]$, which is still valid.
Now, for 
$A=k[x_1,\cdots,x_n]$
and $A^!=\mathbf\Lambda(\xi_1,\cdots,\xi_n)$,
it is shown by Shoikhet in \cite[Theorem 7.1]{Shoikhet}
(see also \cite[Theorem 8.6]{CFFR}) that if $A$ and $A^!$ 
are Koszul dual to each other as Poisson algebras, then their 
deformation quantization
$A_{\hbar}$ and $A^!_{\hbar}$ are also Koszul dual to each other 
as associative algebras over $k[\![\hbar]\!]$.
Therefore,
$A$ being unimodular implies that $A_\hbar$ and $A^!_\hbar$ are
Koszul dual as Calabi-Yau and symmetric Frobenius algebras.
We have shown
in Corollaries \ref{cor:isoofLieinftyforKCY} and 
\ref{cor:isoofLieinftyforKPoisson}
that
\begin{equation*}
\mathrm{HC}_\bullet^{-}(A_{\hbar})\cong\mathrm{HC}^\bullet(A^!_{\hbar})
\quad\mbox{and}
\quad\mathrm{PC}_\bullet^{-}(A[\![\hbar]\!])\cong\mathrm{PC}^\bullet(A^![\![\hbar]\!])
\end{equation*}
as gravity algebras by Koszul duality,
and in Corollaries \ref{cor:isoofLieinftyforPoisson} 
and \ref{cor:isoofLieinftyforFrobPoisson}
that
\begin{equation*}
\mathrm{PC}_\bullet^{-}(A[\![\hbar]\!])\cong
\mathrm{HC}_\bullet^{-}(A_\hbar)\quad\mbox{and}\quad
\mathrm{PC}^\bullet(A^![\![\hbar]\!])\cong
\mathrm{HC}^\bullet(A^!_\hbar)
\end{equation*}
as gravity algebras via deformation quantization.
Combining these results
we have the following statement, which also verifies the commutative diagram 
\eqref{deformationquantizationofKoszulunimodular}
in \S\ref{Sect_intro}:

\begin{theorem}
Let $A=k[x_1,\cdots, x_n]$ be a quadratic unimodular Poisson algebra.
Denote by $A^{\ac}$ the Koszul dual Poisson algebra of $A$,
and by $A_\hbar$ and $A_\hbar^{!}$
Kontsevich's deformation quantization of $A$ and $A^{\ac}$ respectively.
Then
the following
$$
\xymatrixcolsep{4pc}
\xymatrix{
\mathrm{PC}_{\bullet}^{-}(A[\![\hbar]\!])\ar[d]\ar[r]&
\mathrm{PC}^\bullet(A^{!}[\![\hbar]\!])\ar[d]
\\
\mathrm{HC}_{\bullet}^{-}(A_\hbar)\ar[r]&
\mathrm{HC}^\bullet(A^{!}_\hbar),
}
$$
where
$\hbar$ be a formal parameter,
is a commutative diagram 
of isomorphisms of gravity algebras.
\end{theorem}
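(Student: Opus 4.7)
The plan is to combine the four isomorphisms of gravity algebras already established in the paper and then verify that they fit together commutatively. Specifically, the top horizontal arrow is the isomorphism from Corollary \ref{cor:isoofLieinftyforKPoisson} (Koszul duality for quadratic unimodular Poisson polynomials, applied over $k[\![\hbar]\!]$), the bottom horizontal arrow is from Corollary \ref{cor:isoofLieinftyforKCY} (Koszul duality for Calabi--Yau versus symmetric Frobenius), the left vertical arrow is from Corollary \ref{cor:isoofLieinftyforPoisson} (deformation quantization of unimodular Poisson algebras), and the right vertical arrow is from Corollary \ref{cor:isoofLieinftyforFrobPoisson} (deformation quantization of unimodular Frobenius Poisson algebras). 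Since each of the four maps is already an isomorphism of gravity algebras, the only content left to verify is the commutativity of the square.

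To establish commutativity, I would first invoke Shoikhet's theorem (see \cite[Theorem 7.1]{Shoikhet} and \cite[Theorem 8.6]{CFFR}) guaranteeing that Koszul duality between $(A,\pi)$ and $(A^!,\pi^!)$ deforms to a Koszul duality between the quantizations $A_\hbar$ and $A_\hbar^!$ as associative algebras over $k[\![\hbar]\!]$; this ensures that the Koszul-dual pair at the bottom is the quantization of the Koszul-dual pair at the top, so that the diagram makes sense. Next, I would lift each corner of the square to its underlying differential calculus with duality---namely $(\mathrm{HP}^\bullet, \mathrm{HP}_\bullet)$ for the top left, $(\mathrm{HP}^\bullet, \mathrm{HP}^\bullet(-;(-)^*))$ for the top right, $(\mathrm{HH}^\bullet, \mathrm{HH}_\bullet)$ for the bottom left, and $(\mathrm{HH}^\bullet, \mathrm{HH}^\bullet(-;(-)^*))$ for the bottom right---and prove that the induced square of isomorphisms of differential calculi with duality commutes. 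Because the gravity algebra structure is produced functorially by Theorem \ref{BVcriterion} from the differential calculus with duality (with the volume form controlling the Poincar\'e duality), commutativity at that level suffices.

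The heart of the argument is then a diagram chase. The horizontal Koszul isomorphisms come from the explicit identifications \eqref{111identificationofPoissonchainandcochaincpxes}--\eqref{identificationofPoissonchainandcochaincpxes} in the Poisson case and from the quasi-isomorphisms \eqref{qisHochschildcochaicomplex}--\eqref{qishochschildchaincpx} in the associative case, while the vertical deformation-quantization isomorphisms are built from Kontsevich's $L_\infty$-quasi-isomorphism \eqref{Kontsevichsquasiisomorphism} (upgraded to a DG algebra quasi-isomorphism by Manchon--Torossian), the Willwacher cyclic formality \eqref{cyclicformalityconjectureforchains}, and the Calaque--Rossi compatibility square \eqref{Liemoduleactionsarecompatible}, together with their dual analogues in the Frobenius setting. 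The Shoikhet--Koszul--duality result says that the HKR component of Kontsevich's formality intertwines the Koszul identifications for $(A,A^!)$ with those for $(A_\hbar, A_\hbar^!)$. Combined with the observation that, in each of the four corners, the distinguished volume class descends from the top form $dx_1\cdots dx_n$ under the canonical Hochschild--Kostant--Rosenberg and Koszul maps, this yields commutativity of the square at the level of differential calculi with duality, hence of gravity algebras.

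The step I expect to be the main obstacle is matching the two composite isomorphisms \emph{on the nose}, rather than merely up to a nontrivial inner automorphism of one of the corner gravity algebras: each route from the top-left to the bottom-right passes through several distinct quasi-isomorphism machines (Kontsevich, Shoikhet, Willwacher--Calaque, Calaque--Rossi, Manchon--Torossian, and the Koszul resolution chain of \S\ref{Sect_Koszul}), and although each of these has been shown separately to respect the relevant structures, their joint compatibility with the chosen volume forms is not automatic. Concretely, one has to check that the diagrams implicit in Shoikhet \cite{Shoikhet} and Cattaneo--Felder \cite{CF}, which identify the formality morphisms before and after Koszul duality, send the HKR image of $dx_1\cdots dx_n$ to the corresponding top form in $A^{\ac}_\hbar$. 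Once this compatibility of volume classes is pinned down, the remaining verifications reduce to naturality and can be carried out by a direct comparison of explicit formulas.
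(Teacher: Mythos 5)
Your proposal follows essentially the same route as the paper: the four arrows are exactly Corollaries \ref{cor:isoofLieinftyforKPoisson}, \ref{cor:isoofLieinftyforKCY}, \ref{cor:isoofLieinftyforPoisson} and \ref{cor:isoofLieinftyforFrobPoisson}, glued together via Shoikhet's compatibility of Koszul duality with deformation quantization. In fact you are more scrupulous than the paper itself, which simply combines these four results and asserts commutativity without carrying out the on-the-nose comparison of the two composites (and of the volume classes) that you correctly identify as the remaining point to pin down.
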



\end{document}